\newtheorem{theorem}{Theorem}[section]
\newtheorem{lemma}[theorem]{Lemma}
\newtheorem{corollary}[theorem]{Corollary}
\newtheorem{proposition}[theorem]{Proposition}
\theoremstyle{definition}
\newtheorem{definition}[theorem]{Definition}
\newtheorem{example}[theorem]{Example}
\theoremstyle{remark}
\newtheorem{remark}[theorem]{Remark}
\numberwithin{equation}{section}
\newcommand{\complex}{\mathbb{C}}
\newcommand{\C}{\mathbb{C}}
\newcommand{\Z}{\mathbb{Z}}
\newcommand{\R}{\mathbb{R}}
\newcommand{\aalg}{\mathcal{A}}
\newcommand{\rring}{\mathcal{R}}
\newcommand{\gcrystalline}{\aalg_0 \Diamond_{\sigma}^{\alpha} G}
\newcommand{\gskewring}{\mathcal{R}_e \rtimes_{\sigma} G}
\newcommand{\cstarcross}{C(X) \stackrel{C^*}{\rtimes_{\tilde{h}}} \Z}
\DeclareMathOperator{\identity}{id}
\DeclareMathOperator{\supp}{supp}
\DeclareMathOperator{\Per}{Per}
\DeclareMathOperator{\Aper}{Aper}
\DeclareMathOperator{\Pic}{Pic}
\DeclareMathOperator{\Aut}{Aut}
\begin{document}

\title[Simple group graded rings and maximal commutativity]{Simple group graded rings and maximal commutativity}

\author{Johan \"Oinert}
\address{Centre for Mathematical Sciences, Lund University, P.O. Box 118, SE-22100 Lund, Sweden}
\email{Johan.Oinert@math.lth.se}
\thanks{This work was partially supported by The Swedish Research Council, The Crafoord Foundation, The Royal Physiographic Society in Lund, The Swedish Royal Academy of Sciences, The Swedish Foundation of International Cooperation in Research and Higher Education (STINT) and "LieGrits", a Marie Curie Research Training Network funded by the European Community as project MRTN-CT 2003-505078. The author wishes to thank Magnus Goffeng, Patrik Lundstr\"om, Sergei Silvestrov and in particular Christian Svensson for useful discussions on the topic of this paper.}


\subjclass[2000]{13A02, 16S35}


\keywords{Graded rings, Ideals, Simple rings, Maximal commutative subrings, Picard groups, Invariant ideals, Crossed products, Skew group rings, Minimal dynamical systems}

\begin{abstract}
In this paper we provide necessary and sufficient conditions for strongly group graded rings to be simple. For a strongly group graded ring $\rring = \bigoplus_{g\in G} \rring_g$ the grading group $G$ acts, in a natural way, as automorphisms of the commutant of the neutral component subring $\rring_e$ in $\rring$ and of the center of $\rring_e$. We show that if $\rring$ is a strongly $G$-graded ring where $\rring_e$ is maximal commutative in $\rring$, then $\rring$ is a simple ring if and only if $\rring_e$ is $G$-simple (i.e. there are no nontrivial $G$-invariant ideals). We also show that if $\rring_e$ is commutative (not necessarily maximal commutative) and the commutant of $\rring_e$ is $G$-simple, then $\rring$ is a simple ring. These results apply to $G$-crossed products in particular. A skew group ring $\mathcal{R}_e \rtimes_{\sigma} G$, where $\mathcal{R}_e$ is commutative, is shown to be a simple ring if and only if $\mathcal{R}_e$ is $G$-simple and maximal commutative in $\mathcal{R}_e \rtimes_{\sigma} G$. As an interesting example we consider the skew group algebra $C(X) \rtimes_{\tilde{h}} \Z$ associated to a topological dynamical system $(X,h)$. We obtain necessary and sufficient conditions for simplicity of $C(X) \rtimes_{\tilde{h}} \Z$ with respect to the dynamics of the dynamical system $(X,h)$, but also with respect to algebraic properties of $C(X) \rtimes_{\tilde{h}} \Z$. Furthermore, we show that for any strongly $G$-graded ring $\rring$ each nonzero ideal of $\rring$ has a nonzero intersection with the commutant of the center of the neutral component.
\end{abstract}

\maketitle


\section{Introduction}

The aim of this paper is to highlight the important role that maximal commutativity of the neutral component subring plays in a strongly group graded ring when investigating simplicity of the ring itself. The motivation comes from the theory of $C^*$-crossed product algebras associated to topological dynamical systems. To each topological dynamical system, $(X,h)$, consisting of a compact Hausdorff space $X$ and a homeomorphism $h : X \to X$, one may associate a $C^*$-crossed product algebra\footnote{To avoid confusion, we let $\cstarcross$ denote the $C^*$-crossed product algebra in contrast to the (algebraic) skew group algebra, which is denoted $C(X) \rtimes_{\tilde{h}} \Z$.} $\cstarcross$ (see e.g. \cite{TomiyamaBook}). In the recent paper \cite{ChristianTomiyama}, C. Svensson and J. Tomiyama proved the following theorem.

\begin{theorem}\label{ChrissyTommySats}
The following assertions are equivalent:
\begin{enumerate}
	\item[(i)] $(X,h)$ is topologically free (i.e. the aperiodic points are dense in $X$).
	\item[(ii)] $I \cap C(X) \neq \{0\}$ for each nonzero ideal $I$ of $\cstarcross$.
	\item[(iii)] $C(X)$ is a maximal commutative $C^*$-subalgebra of $\cstarcross$.
\end{enumerate}
\end{theorem}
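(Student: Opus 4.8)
The plan is to work inside $\cstarcross$ via its canonical generators: a faithful copy of $C(X)$ together with a unitary $u$ implementing the automorphism $f\mapsto f\circ h^{-1}$, the dense $*$-subalgebra of finite Fourier sums $\sum_n f_n u^n$ with $f_n\in C(X)$, and the canonical faithful conditional expectation $E\colon\cstarcross\to C(X)$, $E(\sum_n f_n u^n)=f_0$, obtained by integrating the gauge action of $\T$; in particular $E(x^*x)=0$ forces $x=0$. I would prove the chain $\mathrm{(i)}\Rightarrow\mathrm{(iii)}$, $\mathrm{(iii)}\Rightarrow\mathrm{(i)}$, $\mathrm{(i)}\Rightarrow\mathrm{(ii)}$, $\mathrm{(ii)}\Rightarrow\mathrm{(i)}$, the last two contrapositively, which yields all three equivalences.

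For $\mathrm{(i)}\Rightarrow\mathrm{(iii)}$: if $b=\sum_n f_n u^n$ commutes with every $f\in C(X)$, then comparing Fourier coefficients in $bf=fb$ gives $f_n\cdot(f\circ h^{-n})=f\cdot f_n$ for all $f$ and all $n$, so the open set $\{x:f_n(x)\neq0\}$ is contained in $\fix(h^n)$; for $n\neq0$ topological freeness makes $\fix(h^n)$ have empty interior, hence $f_n=0$, so $b=f_0\in C(X)$ and $C(X)$ coincides with its own commutant, hence is maximal commutative. For $\mathrm{(iii)}\Rightarrow\mathrm{(i)}$, argued contrapositively: if the aperiodic points are not dense, then $\Per(h)=\bigcup_{n\geq1}\fix(h^n)$ has nonempty interior, and since each $\fix(h^n)$ is closed, the Baire category theorem gives some $n\geq1$ with $U:=\mathrm{int}\,\fix(h^n)\neq\emptyset$; as $h$ commutes with $h^n$, this $U$ is $h$-invariant. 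For any $f\neq0$ with $\{f\neq0\}\subseteq U$ the element $fu^n$ is normal (a direct computation using $f\circ h^{\pm n}=f$ on $\{f\neq0\}$) and commutes with all of $C(X)$, but $fu^n\notin C(X)$; hence $C^*(C(X),fu^n)$ is a commutative $C^*$-subalgebra properly containing $C(X)$, so $C(X)$ is not maximal commutative.

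For $\mathrm{(i)}\Rightarrow\mathrm{(ii)}$ --- the analytic core --- let $I$ be a nonzero ideal, pick $0\neq a\in I$, and set $b:=a^*a\in I$, so $b\geq0$ and $f_0:=E(b)\neq0$ by faithfulness of $E$; write $M:=\|f_0\|>0$. Fix $\epsilon\in(0,M/3)$ and choose a finite Fourier sum $c=\sum_{|n|\leq N}g_n u^n$ with $\|b-c\|<\epsilon$, so that $\|g_0-f_0\|<\epsilon$. By density of the aperiodic points, pick an aperiodic $x_0$ with $f_0(x_0)>M-\epsilon$; since $h^n(x_0)\neq x_0$ for $1\leq|n|\leq N$, continuity provides an open $V\ni x_0$ with $h^n(V)\cap V=\emptyset$ for all such $n$, and we take $g\in C(X)$ with $0\leq g\leq1$, $g(x_0)=1$, $\supp g\subseteq V$. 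Then $gcg=\sum_{|n|\leq N}g\,g_n\,(g\circ h^{-n})\,u^n$, and for $n\neq0$ the factor $g\cdot(g\circ h^{-n})$ vanishes because $\supp(g\circ h^{-n})\subseteq h^n(V)$ is disjoint from $\supp g$; hence $gcg=g^2g_0\in C(X)$. Consequently $gbg\in I$ satisfies $\|gbg-e\|<\epsilon$ where $e:=g^2g_0\in C(X)$ and $\|e\|\geq e(x_0)=g_0(x_0)>M-2\epsilon$. If $I\cap C(X)=\{0\}$, the quotient map $q\colon\cstarcross\to\cstarcross/I$ is isometric on $C(X)$, so $\|e\|=\|q(e)\|=\|q(e-gbg)\|\leq\|e-gbg\|<\epsilon$, contradicting $\|e\|>M-2\epsilon>\epsilon$. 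Hence $I\cap C(X)\neq\{0\}$.

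Finally, $\mathrm{(ii)}\Rightarrow\mathrm{(i)}$ contrapositively: with $U=\mathrm{int}\,\fix(h^n)\neq\emptyset$ and $f\neq0$, $\{f\neq0\}\subseteq U$ as above, put $a:=fu^n-f$ and let $I$ be the closed two-sided ideal it generates; then $I\neq\{0\}$ since the degree-$n$ Fourier coefficient of $a$ is $f\neq0$, while $I\cap C(X)=\{0\}$, which I would prove by showing that the representations annihilating $a$ separate the points of $C(X)$. Indeed, for $x_0\notin\overline U$ the whole orbit of $x_0$ avoids $\{f\neq0\}$ (the complement of the $h$-invariant set $\overline U$ being open and invariant), so the orbit representation at $x_0$ kills $f$, hence $a$, while still detecting $x_0$; and for $x_0\in\overline U$ one has $x_0\in\fix(h^n)$ (as $\fix(h^n)$ is closed), so $x_0$ is periodic of some period $p\mid n$, and the finite-dimensional representation sending $u$ to the cyclic shift on $\C^p$ and $C(X)$ to the diagonal of its values on the orbit sends $u^n$ to the identity and hence annihilates $fu^n-f$ while detecting $x_0$. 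Thus every $g\in I\cap C(X)$ vanishes at every point of $X$, so $I\cap C(X)=\{0\}$ and $C(X)$ is not maximal commutative. The main obstacle is the approximation step in $\mathrm{(i)}\Rightarrow\mathrm{(ii)}$: exactly there one must use topological freeness, via the wandering neighbourhood $V$ that collapses every off-diagonal term $g\,g_n\,(g\circ h^{-n})u^n$; a secondary point is verifying in $\mathrm{(ii)}\Rightarrow\mathrm{(i)}$ that the representations killing $fu^n-f$ are plentiful enough to separate $C(X)$, in particular over $\partial U$.
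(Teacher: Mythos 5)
The paper does not actually prove Theorem~\ref{ChrissyTommySats}; it is quoted verbatim from Svensson--Tomiyama \cite{ChristianTomiyama}, so there is no internal proof to compare against. Judged on its own, three of your four implications are correct and standard: (i)$\Leftrightarrow$(iii) via the Fourier coefficients $E(bu^{-n})$, the observation that $\{f_n\neq 0\}$ is an open subset of $\Per^n(h)$, and the Baire category argument locating some $\fix(h^{\circ(n)})$ with nonempty interior; and $\neg$(i)$\Rightarrow\neg$(ii) via the orbit representations off $\overline{U}$ and the finite-dimensional cyclic-shift representations on $\overline{U}\subseteq\fix(h^{\circ(n)})$, which do separate the points of $C(X)$ while killing $fu^n-f$.

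The gap is in (i)$\Rightarrow$(ii), at the single sentence ``if $I\cap C(X)=\{0\}$, the quotient map $q$ is isometric on $C(X)$.'' The quotient seminorm satisfies $\lVert q(e)\rVert=\inf_{i\in I}\lVert e-i\rVert=\mathrm{dist}(e,\overline{I})$, so $q$ is isometric on $C(X)$ if and only if $\overline{I}\cap C(X)=\{0\}$: the standard fact you are invoking (an injective $*$-homomorphism of $C^*$-algebras is isometric) applies to the quotient by the \emph{closed} ideal $\overline{I}$, and $I\cap C(X)=\{0\}$ does not imply $\overline{I}\cap C(X)=\{0\}$. What your (correct) approximation argument actually produces is $e\in C(X)$ with $\lVert e\rVert>M-2\epsilon$ and $\mathrm{dist}(e,I)<\epsilon$; this contradicts $\overline{I}\cap C(X)=\{0\}$ but is entirely consistent with $I\cap C(X)=\{0\}$ for a non-closed $I$. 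So your argument establishes the classical statement --- every nonzero \emph{closed} ideal meets $C(X)$ --- whereas assertion (ii) concerns arbitrary ideals, and the paper emphasizes immediately after the statement that this passage from closed to arbitrary ideals is precisely the content of the Svensson--Tomiyama theorem. Closing the gap requires producing an exact, not approximate, nonzero element of $I\cap C(X)$; Svensson and Tomiyama do this by first proving that every nonzero (not necessarily closed or self-adjoint) ideal intersects the commutant $C_{\rring}(C(X))$ nontrivially --- the $C^*$-analogue of Theorem~\ref{stronglycommutantsnitt} --- and then using (i)$\Leftrightarrow$(iii) to identify that commutant with $C(X)$ under topological freeness.
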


This theorem is a generalization (from closed ideals to arbitrary ideals) of a well-known theorem in the theory of $C^*$-crossed products associated to topological dynamical system (see e.g. \cite{TomiyamaBook} for details). Theorem \ref{ChrissyTommySats} is very useful when proving the following theorem, which originally appeared in \cite{Power}.

\begin{theorem}\label{simplicityCstar}
Suppose that $X$ is infinite. $\cstarcross$ is simple if and only if $(X,h)$ is minimal (i.e. each orbit is dense in $X$).
\end{theorem}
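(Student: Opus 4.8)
The plan is to establish the two implications separately: for ``minimality $\Rightarrow$ simplicity'' I would lean on Theorem~\ref{ChrissyTommySats}, and for ``simplicity $\Rightarrow$ minimality'' on a quotient construction. Two standard facts will be used freely: (a) the closed ideals of $C(X)$ are exactly the sets $\{f\in C(X): f|_Y=0\}$ for closed $Y\subseteq X$, and such an ideal is invariant under the automorphism $f\mapsto f\circ h^{-1}$ of $C(X)$ precisely when $Y$ is $h$-invariant; and (b) an $h$-equivariant surjective $*$-homomorphism $C(X)\to C(Y)$ extends to a surjective $*$-homomorphism between the associated $C^*$-crossed products.

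\emph{Minimality implies simplicity.} Assume $(X,h)$ is minimal and $X$ is infinite. First I would note that $(X,h)$ is then topologically free: a periodic point would have a finite, hence closed and nonempty, $h$-invariant orbit, which by minimality would be all of $X$, contradicting that $X$ is infinite; thus every point of $X$ is aperiodic and condition (i) of Theorem~\ref{ChrissyTommySats} holds, so $I\cap C(X)\neq\{0\}$ for every nonzero ideal $I$ of $\cstarcross$. Now fix a nonzero ideal $I$ of $\cstarcross$. Conjugating by the canonical unitary implementing the $\Z$-action, and using that $I$ is two-sided, shows that $I\cap C(X)$ is a $\tilde h$-invariant ideal of $C(X)$; it is nonzero by the previous sentence, so its closure is a nonzero closed $\tilde h$-invariant ideal of $C(X)$, which by minimality (via (a)) must be all of $C(X)$. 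Since $C(X)$ is unital, a proper ideal cannot be dense, so in fact $I\cap C(X)=C(X)$; hence $1\in I$ and $I=\cstarcross$. Thus $\cstarcross$ is simple.

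\emph{Simplicity implies minimality.} I would argue contrapositively. If $(X,h)$ is not minimal, pick $x\in X$ whose orbit is not dense and let $Y$ be the closure of its orbit, a nonempty, proper, closed, $h$-invariant subset of $X$. By Tietze's theorem the restriction map $C(X)\to C(Y)$ is a surjective $*$-homomorphism, and it intertwines the induced $\Z$-actions, so by (b) it extends to a surjective $*$-homomorphism $\Phi\colon \cstarcross \to C(Y)\stackrel{C^*}{\rtimes_{\tilde h}}\Z$. Then $\ker\Phi$ is a two-sided ideal of $\cstarcross$; it is proper because $Y\neq\emptyset$ makes the target nonzero, and it is nonzero because $Y\neq X$ forces the restriction map to kill some nonzero $f\in C(X)\subseteq\cstarcross$ (Urysohn). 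Hence $\cstarcross$ is not simple.

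The analytic heart of the matter — that topological freeness yields the intersection property (ii) — is precisely Theorem~\ref{ChrissyTommySats}, which we may invoke, so I expect the remaining steps to be essentially bookkeeping rather than a real obstacle. The one point deserving care is the role of the hypothesis that $X$ is infinite: it is genuinely used, but only in the forward implication, to rule out periodic orbits; the reverse implication holds without it. I would also remark that the argument as given is insensitive to whether ``simple'' is read with closed or with arbitrary two-sided ideals, since the forward direction handles an arbitrary nonzero ideal and the reverse direction produces an honest closed ideal.
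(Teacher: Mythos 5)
Your proof is correct. Note that the paper does not actually prove Theorem~\ref{simplicityCstar} --- it is quoted from Power's paper, with the remark that Theorem~\ref{ChrissyTommySats} is ``very useful'' in its proof --- so the right benchmark is the paper's proof of the algebraic analogue, Theorem~\ref{algebraicanalogue}. Your forward direction follows exactly the route the paper intends: minimality plus infiniteness gives topological freeness (this is Remark~\ref{minimalityimpliesfreeness}), Theorem~\ref{ChrissyTommySats} gives the intersection property, and the correspondence between closed $h$-invariant subsets and closed $\tilde h$-invariant ideals of $C(X)$ finishes the job; your observation that a dense ideal of the unital algebra $C(X)$ must be all of $C(X)$ correctly upgrades the argument so that it handles arbitrary (not just closed) ideals, which is the point of Svensson--Tomiyama's refinement. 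For the converse, the paper's algebraic version instead routes through $\Z$-simplicity of $C(X)$ (Lemma~\ref{ZsimpleZinvSubsets}) and the equivalence with maximal commutativity, whereas you build the quotient $C(Y)\stackrel{C^*}{\rtimes_{\tilde h}}\Z$ and exhibit its kernel; both are standard, and your version has the small advantage of producing an explicit closed ideal, making the statement insensitive to which notion of simplicity is used. The only point worth flagging is your fact (b): the induced map on crossed products is automatically surjective for the \emph{full} crossed product, and since $\Z$ is amenable the full and reduced crossed products coincide here, so the step is valid --- but it is worth saying this explicitly, as for reduced crossed products over non-exact groups the functoriality you invoke can fail. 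Your remark on where the infiniteness of $X$ enters is accurate and is consistent with the paper's finite single orbit example, where $(X,h)$ is minimal yet the crossed product is not simple.
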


In the theory of graded rings, one theorem which provides sufficient conditions for a strongly group graded ring to be simple, is the following which was proven by F. Van Oystaeyen in \cite[Theorem 3.4]{VanOystaeyen}.

\begin{theorem}\label{stronglysimple}
Let $\mathcal{R} = \bigoplus_{g\in G} \mathcal{R}_g$ be a strongly $G$-graded ring such that the morphism $G \to \Pic(\mathcal{R}_e)$, defined by $g \to [\mathcal{R}_g ]$, is injective. If $\mathcal{R}_e$ is a simple ring, then $\mathcal{R}$ is a simple ring.
\end{theorem}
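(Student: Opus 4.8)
The plan is to prove the contrapositive–style statement that every nonzero two‑sided ideal $I$ of $\mathcal{R}$ equals $\mathcal{R}$. Recall the standard facts that $1 \in \mathcal{R}_e$ (true for any $G$-graded ring) and that $I \cap \mathcal{R}_e$ is a two‑sided ideal of $\mathcal{R}_e$. Since $\mathcal{R}_e$ is assumed simple, it suffices to show $I \cap \mathcal{R}_e \neq \{0\}$: then $I \cap \mathcal{R}_e = \mathcal{R}_e \ni 1$, so $I = \mathcal{R}$. Throughout I will use strong gradedness in the form $\mathcal{R}_h \mathcal{R}_{h^{-1}} = \mathcal{R}_e$ for all $h \in G$, together with the elementary observation that if $a \in \mathcal{R}_h$ satisfies $ba = 0$ for all $b \in \mathcal{R}_{h^{-1}}$, then $a = \mathcal{R}_e a = \mathcal{R}_h\mathcal{R}_{h^{-1}}a = 0$.

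The first step is a support‑minimization argument. For a nonzero $x \in \mathcal{R}$ write $x = \sum_{g \in \supp(x)} x_g$ with $x_g \in \mathcal{R}_g \setminus \{0\}$, and let $n$ be the least cardinality of $\supp(x)$ among all nonzero $x \in I$. Starting from a nonzero $y \in I$ with $\abs{\supp(y)} = n$ and picking $h \in \supp(y)$, the observation above yields $b \in \mathcal{R}_{h^{-1}}$ with $b y_h \neq 0$; then $by \in I$ is nonzero, has $e \in \supp(by) \subseteq h^{-1}\supp(y)$, and hence (by minimality of $n$) satisfies $\abs{\supp(by)} = n$. Since $(by)_e \neq 0$ generates a nonzero, hence (by simplicity) improper, ideal of $\mathcal{R}_e$, one can write $1 = \sum_i a_i (by)_e c_i$ and replace $by$ by $x := \sum_i a_i (by) c_i \in I$, which still has $\abs{\supp(x)} = n$, $e \in \supp(x)$, and now $x_e = 1$. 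If $n = 1$ this already gives $x = 1 \in I$, so $I = \mathcal{R}$ and we are done.

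The heart of the proof is to exclude $n \geq 2$, and this is where the injectivity of $G \to \Pic(\mathcal{R}_e)$ enters. Assume $n\ge 2$ and pick $g_0 \in \supp(x)$ with $g_0 \neq e$. For each $r \in \mathcal{R}_e$ the element $rx - xr \in I$ has vanishing $e$-component (because $r\cdot 1 - 1 \cdot r = 0$), hence $\abs{\supp(rx-xr)} \le n-1 < n$, forcing $rx - xr = 0$ by minimality; comparing $g_0$-components gives $r x_{g_0} = x_{g_0} r$, i.e.\ $x_{g_0}$ centralizes $\mathcal{R}_e$. Consequently the map $\varphi\colon \mathcal{R}_e \to \mathcal{R}_{g_0}$, $r \mapsto r x_{g_0} = x_{g_0} r$, is a homomorphism of $\mathcal{R}_e$-bimodules. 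Its kernel is a two‑sided ideal of $\mathcal{R}_e$ (using that $x_{g_0}$ centralizes $\mathcal{R}_e$), proper since $1\cdot x_{g_0}=x_{g_0}\neq 0$, hence zero; so $\varphi$ is injective. For surjectivity, note that $\mathcal{R}_{g_0^{-1}} x_{g_0}$ is a two‑sided ideal of $\mathcal{R}_e$ (again using centrality of $x_{g_0}$ together with $\mathcal{R}_e\mathcal{R}_{g_0^{-1}} = \mathcal{R}_{g_0^{-1}} = \mathcal{R}_{g_0^{-1}}\mathcal{R}_e$), and it is nonzero by the elementary observation; by simplicity it equals $\mathcal{R}_e$, whence $\im\varphi = \mathcal{R}_e x_{g_0} = \mathcal{R}_{g_0}\mathcal{R}_{g_0^{-1}}x_{g_0} = \mathcal{R}_{g_0}$. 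Thus $\mathcal{R}_{g_0} \cong \mathcal{R}_e$ as $\mathcal{R}_e$-bimodules, i.e.\ $[\mathcal{R}_{g_0}] = [\mathcal{R}_e]$ in $\Pic(\mathcal{R}_e)$; injectivity of $g \mapsto [\mathcal{R}_g]$ then forces $g_0 = e$, contradicting $g_0 \neq e$. Hence $n = 1$ and $I = \mathcal{R}$.

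I expect the main obstacle to be exactly the verification that $I\cap\mathcal{R}_e\neq\{0\}$, and within that the surjectivity of $\varphi$: this is the point at which strong gradedness (not mere gradedness) and simplicity of $\mathcal{R}_e$ must be combined to actually produce the $\mathcal{R}_e$-bimodule isomorphism $\mathcal{R}_e \cong \mathcal{R}_{g_0}$ that the Picard‑group hypothesis can act on. The remaining ingredients — the bookkeeping with finite supports and the repeated use of "$ba=0$ for all $b\in\mathcal{R}_{h^{-1}}$ implies $a=0$" — are routine, but each invocation genuinely uses $\mathcal{R}_h\mathcal{R}_{h^{-1}} = \mathcal{R}_e$.
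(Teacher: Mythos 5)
Your proof is correct. Note that the paper itself does not prove this statement: it is Theorem \ref{stronglysimple}, quoted from Van Oystaeyen's paper \cite{VanOystaeyen}, so there is no in-paper argument to compare against line by line. What you have produced is a self-contained proof that, in effect, reassembles two techniques the paper uses elsewhere. The support-minimization step (pick a nonzero element of $I$ of minimal support, normalize so that the $e$-component is $1$, and use commutators $rx-xr$ to force the remaining homogeneous components to centralize $\mathcal{R}_e$) is exactly the engine of Theorem \ref{newzentersnitt}; and the step showing that a nonzero $\mathcal{R}_e$-centralizing element $x_{g_0}\in\mathcal{R}_{g_0}$ together with simplicity of $\mathcal{R}_e$ forces $[\mathcal{R}_{g_0}]=[\mathcal{R}_e]$ in $\Pic(\mathcal{R}_e)$ is the same computation as in the proof of Proposition \ref{injectivemaxcommequiv} (which the paper notes uses the techniques of Van Oystaeyen's original proof), except that you work directly with the bimodule map $r\mapsto rx_{g_0}$ rather than first extracting an invertible element, and so you never need $\mathcal{R}_e$ to be commutative. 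All the individual verifications check out: the annihilator observation is Lemma \ref{annihilator}, $(by)_e=by_h\neq 0$ so the normalization is legitimate, $\ker\varphi$ and $\mathcal{R}_{g_0^{-1}}x_{g_0}$ are indeed two-sided ideals of $\mathcal{R}_e$ because $x_{g_0}$ centralizes $\mathcal{R}_e$, and $\mathcal{R}_e x_{g_0}=\mathcal{R}_{g_0}\mathcal{R}_{g_0^{-1}}x_{g_0}=\mathcal{R}_{g_0}$ uses strong gradedness exactly where it must. The injectivity of $\psi$ is then invoked correctly to rule out $g_0\neq e$.
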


In \cite{OinSil,OinSilAGMFGBG,OinSil3,OinSil4} an extensive investigation of the intersection between arbitrary nonzero ideals in various types of graded rings and certain subrings, has been carried out. Given a subset $S$ of a ring $\rring$ we denote by $C_\rring(S)$ the commutant of $S$ in $\rring$, i.e. the set of all elements of $\rring$ which commute with each element in $S$. In the recent paper \cite{OinSilTheVav}, the following theorem was proven.

\begin{theorem}\label{stronglycommutantsnitt}
If $\mathcal{R} = \bigoplus_{g\in G} \mathcal{R}_g$ is a strongly $G$-graded ring, where $\mathcal{R}_e$ is commutative, then
\begin{displaymath}
	I\cap C_{\rring}(\mathcal{R}_e) \neq\{0\}
\end{displaymath}
for each nonzero ideal $I$ in $\mathcal{R}$.
\end{theorem}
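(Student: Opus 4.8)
The plan is to run a minimal-support argument, essentially the technique used in the papers cited above for crossed products. Since $I \neq \{0\}$ and every element of $\rring = \bigoplus_{g\in G}\rring_g$ has finite support, one may choose $x \in I\setminus\{0\}$ for which $\supp(x) := \{g\in G : x_g \neq 0\}$ has minimal cardinality, where $x = \sum_{g} x_g$ denotes the decomposition into homogeneous components. I would first reduce to the case $e \in \supp(x)$. Pick any $h \in \supp(x)$, so $x_h \neq 0$. Since $\rring$ is strongly $G$-graded we have $\rring_h\rring_{h^{-1}} = \rring_e \ni 1_\rring$, so $\rring_{h^{-1}}x_h = 0$ would give $\rring_e x_h = \rring_h\rring_{h^{-1}}x_h = 0$ and hence $x_h = 0$, a contradiction; thus there is $c \in \rring_{h^{-1}}$ with $c x_h \neq 0$. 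Then $cx \in I$, $\supp(cx) \subseteq h^{-1}\supp(x)$, and the neutral component of $cx$ is $c x_h \neq 0$, so $e \in \supp(cx)$ and $|\supp(cx)| \le |\supp(x)|$. Minimality forces equality, and replacing $x$ by $cx$ I may assume $x_e \neq 0$.

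For the main step, take an arbitrary $r \in \rring_e$ and consider $y := rx - xr \in I$. For each $g$ we have $r x_g - x_g r \in \rring_g$, so $\supp(y) \subseteq \supp(x)$; moreover, because $\rring_e$ is commutative, the neutral component of $y$ is $r x_e - x_e r = 0$, whence $\supp(y) \subseteq \supp(x)\setminus\{e\}$ and in particular $|\supp(y)| < |\supp(x)|$. By the minimality of $|\supp(x)|$ among nonzero elements of $I$, this forces $y = 0$, i.e. $rx = xr$. Since $r \in \rring_e$ was arbitrary, $x \in C_{\rring}(\rring_e)$, and $x$ is a nonzero element of $I$, which proves the claim.

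I expect the only delicate point to be the reduction to $e \in \supp(x)$: one must make sure that left-multiplying $x$ by a homogeneous element in order to translate its support does not annihilate precisely the component being moved into the neutral position. This is exactly where strong gradedness is used, together with the fact that $1_\rring \in \rring_e$; without it the argument breaks down. Everything else is routine bookkeeping with the direct-sum decomposition $\rring = \bigoplus_{g\in G}\rring_g$.
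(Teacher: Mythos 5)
Your proof is correct and uses essentially the same minimal-support argument that the paper employs (for the slightly more general Theorem \ref{newzentersnitt}, where $C_{\rring}(Z(\rring_e))$ replaces $C_{\rring}(\rring_e)$); the only cosmetic difference is that you argue directly by minimizing $\#\supp(x)$ over all nonzero elements of $I$, whereas the paper phrases the argument contrapositively. The delicate points --- using strong gradedness together with $1_\rring\in\rring_e$ to translate the support so that $e\in\supp(x)$ without annihilating the relevant component, and commutativity of $\rring_e$ to kill the neutral component of $rx-xr$ --- are all handled correctly.
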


This implies that if $\rring$ is a strongly $G$-graded ring where $\rring_e$ is maximal commutative in $\rring$, then each nonzero ideal in $\rring$ has a nontrivial intersection with $\rring_e$. 
For skew group rings the following was shown in \cite[Theorem 3]{OinSil4}.

\begin{theorem}\label{skewrings}
Let $\mathcal{R}=\gskewring$ be a skew group ring satisfying either of the following two conditions:
\begin{itemize}
\item $\mathcal{R}_e$ is an integral domain and $G$ is an abelian group.
\item $\mathcal{R}_e$ is commutative and $G$ is a torsion-free abelian group.
\end{itemize}
The following two assertions are equivalent:
\begin{enumerate}
	\item[(i)] The ring $\mathcal{R}_e$ is a maximal commutative subring in $\mathcal{R}$.
	\item[(ii)] $I \cap \mathcal{R}_e \neq \{0\}$ for each nonzero ideal $I$ in $\mathcal{R}$.
\end{enumerate}
\end{theorem}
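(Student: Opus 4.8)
The plan is to prove the two implications separately, the substance lying in $(ii)\Rightarrow(i)$. Throughout I write $u_g$ for the canonical degree‑$g$ unit of $\mathcal{R}=\mathcal{R}_e\rtimes_\sigma G$, so that $\mathcal{R}_g=\mathcal{R}_eu_g$ and $u_gr=\sigma_g(r)u_g$ for $r\in\mathcal{R}_e$. For $(i)\Rightarrow(ii)$ I would first observe that a skew group ring is strongly $G$‑graded, since $\mathcal{R}_g\mathcal{R}_h=\mathcal{R}_eu_g\mathcal{R}_eu_h=\mathcal{R}_eu_{gh}=\mathcal{R}_{gh}$ because each $\sigma_g$ is an automorphism of $\mathcal{R}_e$. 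As $\mathcal{R}_e$ is commutative under either hypothesis, maximal commutativity of $\mathcal{R}_e$ in $\mathcal{R}$ is exactly the statement $C_{\mathcal{R}}(\mathcal{R}_e)=\mathcal{R}_e$, and Theorem~\ref{stronglycommutantsnitt} then gives $I\cap\mathcal{R}_e=I\cap C_{\mathcal{R}}(\mathcal{R}_e)\neq\{0\}$ for every nonzero ideal $I$. (A self‑contained alternative is a minimal‑support argument: pick $0\neq x\in I$ with $\supp(x)$ of least size; if $\supp(x)\neq\{e\}$, replace $x$ by $xu_g$ so that $e\in\supp(x)$, and then, since $x\notin C_{\mathcal{R}}(\mathcal{R}_e)=\mathcal{R}_e$, pick $r\in\mathcal{R}_e$ with $rx-xr\neq0$; its $e$‑component vanishes, so $rx-xr\in I$ has strictly smaller support, a contradiction; hence $x\in I\cap\mathcal{R}_e$.)

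For $(ii)\Rightarrow(i)$ I would argue by contraposition: assuming $\mathcal{R}_e$ is not maximal commutative, I construct a nonzero ideal $I$ with $I\cap\mathcal{R}_e=\{0\}$. Pick $c=\sum_g c_gu_g\in C_{\mathcal{R}}(\mathcal{R}_e)\setminus\mathcal{R}_e$. Comparing $u_g$‑components in $rc=cr$ for $r\in\mathcal{R}_e$ and using commutativity of $\mathcal{R}_e$ yields $c_g\bigl(r-\sigma_g(r)\bigr)=0$ for all $r$ and all $g$. Choose $g_0\neq e$ with $a:=c_{g_0}\neq0$, so that $a\bigl(r-\sigma_{g_0}(r)\bigr)=0$ for all $r$; in the integral‑domain case this forces $\sigma_{g_0}=\identity$. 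Now set $w:=u_{g_0}-1$, $b:=aw$, and let $I:=\mathcal{R}b\mathcal{R}$. Since $b=au_{g_0}-a$ has nonzero $u_{g_0}$‑component, $I\neq\{0\}$.

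It then remains to prove $I\cap\mathcal{R}_e=\{0\}$, which I would do in two steps. \emph{Reduction.} From $ar=a\sigma_{g_0}(r)$ one gets $br=a\sigma_{g_0}(r)\,w$, and from $G$ abelian one gets $wu_k=u_kw$; hence $by\in\mathcal{R}w$ for every $y\in\mathcal{R}$, so $xby\in\mathcal{R}w$ for all $x,y$, and therefore $I\subseteq\mathcal{R}w$ as a set (when $\sigma_{g_0}=\identity$ this is immediate, $w$ being central). \emph{Support computation.} For $\xi=\sum_k\xi_ku_k$ one has $\xi w=\sum_m\bigl(\xi_{mg_0^{-1}}-\xi_m\bigr)u_m$, so $\xi w\in\mathcal{R}_e$ forces $\xi_{mg_0^{-1}}=\xi_m$ for all $m\neq e$; combined with finiteness of $\supp(\xi)$ this forces $\xi w=0$. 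Indeed, if $g_0$ has infinite order (automatic when $G$ is torsion‑free), every orbit of right translation by $g_0$ is infinite, so $\xi$ must vanish; if $g_0$ has finite order (possible only in the integral‑domain case, where $w$ is central), then $\xi$ is constant on the cosets $C$ of $\langle g_0\rangle$, and $\bigl(\sum_{k\in C}u_k\bigr)(u_{g_0}-1)=0$ because $Cg_0=C$. Hence $\mathcal{R}w\cap\mathcal{R}_e=\{0\}$, so $I\cap\mathcal{R}_e=\{0\}$, contradicting $(ii)$.

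The hard part will be the support computation: one has to check carefully that the relation $\xi_{mg_0^{-1}}=\xi_m$, which is available only for $m\neq e$, together with finite support genuinely kills $\xi w$ in both the finite‑ and infinite‑order regimes for $g_0$. This is exactly where the hypothesis on $G$ is used — torsion‑freeness to defeat the infinite‑orbit case, abelianness in the coset identity — in tandem with commutativity of $\mathcal{R}_e$ (needed to pass from $rc_g=c_g\sigma_g(r)$ to $c_g(r-\sigma_g(r))=0$) and, in the other branch, the integral‑domain hypothesis (which yields $\sigma_{g_0}=\identity$, hence $w$ central, accommodating $g_0$ of finite order). The reduction $I\subseteq\mathcal{R}w$ also needs a little care for arbitrary non‑homogeneous elements of the ideal generated by $b$, but it comes down to the two displayed identities and additivity.
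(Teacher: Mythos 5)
Your argument is correct, but it takes a genuinely different route from the one in the paper. The paper does not reprove this statement as such; it derives a strictly stronger version (Theorem \ref{skewringsnew}, valid for an arbitrary group $G$ and arbitrary commutative $\mathcal{R}_e$), and the decisive direction (ii)$\Rightarrow$(i) is handled there by a different device. Both proofs start identically: from a commutant element one extracts $g_0\neq e$ and $a=c_{g_0}\neq 0$ with $a\,\sigma_{g_0}(r)=a\,r$ for all $r\in\mathcal{R}_e$, and both take the ideal generated by $b=a\,u_{g_0}-a$. The paper then computes the general spanning element $x\,b\,y$ of that ideal to be of the form $c\,u_{gh}-c\,u_{g g_0 h}$ and observes that the additive (not multiplicative) map $\epsilon:\sum_g a_g u_g\mapsto\sum_g a_g$ annihilates all such elements while being injective on $\mathcal{R}_e$; this yields $I\cap\mathcal{R}_e=\{0\}$ with no hypothesis on $G$ at all. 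You instead prove the containment $I\subseteq\mathcal{R}(u_{g_0}-1)$ --- which is where you genuinely need $G$ abelian, to commute $u_{g_0}-1$ past the $u_k$ --- and then analyse $\mathcal{R}(u_{g_0}-1)\cap\mathcal{R}_e$ by hand via the recursion $\xi_{mg_0^{-1}}=\xi_m$ for $m\neq e$, split according to whether $g_0$ has infinite or finite order. That support computation is sound in both regimes; note that your finite-order coset argument never actually uses $\sigma_{g_0}=\identity$ or the integral-domain hypothesis, only finiteness of $\langle g_0\rangle$, so your proof in fact establishes the equivalence for every abelian $G$ with commutative $\mathcal{R}_e$, already slightly more than the two bullets claim. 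What the paper's $\epsilon$-trick buys is precisely the elimination of the case split and of the hypothesis on $G$: since $\epsilon(\xi u_{g_0})=\epsilon(\xi)$ identically, $\epsilon$ vanishes on the whole ideal without any orbit bookkeeping. Your version buys a more concrete picture of why the intersection is trivial, at the cost of that extra generality.
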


This theorem can be seen as a generalization of the algebraic analogue of Theorem \ref{ChrissyTommySats} and it is applicable to the skew group algebra which sits densely inside the $C^*$-crossed product algebra $\cstarcross$.

The starting point of this paper was to consider the following proposition.

\begin{proposition}\label{maintheorem}
Let $\mathcal{R} = \bigoplus_{g\in G} \mathcal{R}_g$ be a strongly $G$-graded ring where $\mathcal{R}_e$ is maximal commutative in $\mathcal{R}$. If $\mathcal{R}_e$ is a simple ring, then $\mathcal{R}$ is a simple ring.
\end{proposition}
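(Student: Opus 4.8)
The plan is to obtain the statement as a short consequence of Theorem~\ref{stronglycommutantsnitt}. First I would record two preliminary facts. Since $\rring_e$ is maximal commutative in $\rring$ it is, in particular, commutative, and maximal commutativity is precisely the condition $C_{\rring}(\rring_e)=\rring_e$; thus Theorem~\ref{stronglycommutantsnitt} applies and yields $I\cap\rring_e = I\cap C_{\rring}(\rring_e)\neq\{0\}$ for every nonzero ideal $I$ of $\rring$ (this is exactly the remark made immediately after that theorem in the excerpt). Second, $\rring$ being $G$-graded and unital, its identity $1_{\rring}$ lies in the neutral component $\rring_e$ and is the multiplicative identity of the ring $\rring_e$; moreover $\rring_e$ is nonzero, being simple, so $\rring\neq\{0\}$.

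Next I would take an arbitrary nonzero ideal $I$ of $\rring$ and set $J=I\cap\rring_e$, which is nonzero by the above. Then I would verify the routine fact that $J$ is an ideal of $\rring_e$: it is an additive subgroup, and for $a\in\rring_e$ and $x\in J$ one has $ax\in I$ because $I$ is an ideal of $\rring$, while $ax\in\rring_e$ because $\rring_e$ is a subring (and symmetrically on the other side). Since $\rring_e$ is simple and $J\neq\{0\}$, it follows that $J=\rring_e$, hence $1_{\rring}=1_{\rring_e}\in J\subseteq I$, so $I=\rring$. Therefore the only ideals of the nonzero ring $\rring$ are $\{0\}$ and $\rring$, i.e.\ $\rring$ is simple.

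As to where the difficulty lies: the argument above is essentially bookkeeping, and all the real work is hidden in Theorem~\ref{stronglycommutantsnitt}, whose proof is the place where the hypothesis of \emph{strong} grading is genuinely used (to manufacture, inside any nonzero ideal, a nonzero element of the commutant of $\rring_e$). The one point that needs a little care in the reduction is that maximal commutativity is invoked twice: once to guarantee $\rring_e$ is commutative, so that Theorem~\ref{stronglycommutantsnitt} is applicable at all, and once to identify $C_{\rring}(\rring_e)$ with $\rring_e$, so that the nonzero intersection supplied by that theorem actually lands in $\rring_e$. Finally I would remark that only one implication is claimed here; the converse-type characterisations of simplicity of $\rring$ established later in the paper replace the hypothesis ``$\rring_e$ simple'' by the weaker-looking, but in this graded setting exactly appropriate, notion ``$\rring_e$ is $G$-simple''.
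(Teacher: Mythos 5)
Your argument is correct and coincides with the paper's own first proof of Proposition \ref{maintheorem}, which likewise combines Theorem \ref{stronglycommutantsnitt} (via the identification $C_{\rring}(\rring_e)=\rring_e$ from maximal commutativity) with Lemma \ref{idealproj} and the simplicity of $\rring_e$ to force $1_{\rring}\in I$. The paper additionally records two alternative proofs (via injectivity of $\psi:G\to\Pic(\rring_e)$ and Theorem \ref{stronglysimple}, and via Proposition \ref{GSimpleCommutantSimple}), but your route is exactly its first one.
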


In Section \ref{ProofSection} we give three different proofs of Proposition \ref{maintheorem} and we shall in fact see that it is a very special case of Theorem \ref{stronglysimple}. The proofs are based on facts obtained in the preceding sections, and along the way we obtain new results on strongly graded rings in particular. In Section \ref{Preliminaries} we give definitions and background information necessary for the understanding of the rest of this paper. In Section \ref{intersectionnewresult} we generalize \cite[Corollary 3]{OinSilTheVav} and show that in a strongly $G$-graded ring $\rring$ each nonzero ideal has a nonzero intersection with $C_\rring(Z(\rring_e))$ (Theorem \ref{newzentersnitt}). Furthermore, we generalize \cite[Theorem 3]{OinSil4} and show that for a skew group ring $\gskewring$ where $\mathcal{R}_e$ is commutative, each nonzero ideal of $\gskewring$ has a nonzero intersection with $\mathcal{R}_e$ if and only if $\mathcal{R}_e$ is maximal commutative in $\gskewring$ (Theorem \ref{skewringsnew}).

The main objective of Section \ref{Simplicity} is to describe the connection between maximal commutativity of $\rring_e$ in a strongly group graded ring $\rring$ and injectivity of the canonical map $G \to \Pic(\rring_e)$. In Section \ref{GSimpleInCrystallineGradedRings} we show that if $\gcrystalline$ is a simple crystalline graded ring where $\aalg_0$ is commutative, then $\aalg_0$ is $G$-simple (Proposition \ref{CrystallineSimpleImplyGSimple}).
In Example \ref{FirstWeylAlgebraExample} we apply this result to the first Weyl algebra.
In Section \ref{GSimpleSubringsStronglyGraded} we investigate simplicity of a strongly $G$-graded ring $\rring$ with respect to $G$-simplicity and maximal commutativity of $\rring_e$. In particular we show that if $\rring$ is a strongly $G$-graded ring where $\rring_e$ is maximal commutative in $\rring$, then $\rring_e$ is $G$-simple if and only if $\rring$ is simple (Theorem \ref{stronglyGgradedSimpleNecSuff}). We also show the slightly more general result in one direction, namely that that if $C_{\rring}(\rring_e)$ is $G$-simple (with respect to the usual action) and $\rring_e$ is commutative (not necessarily maximal commutative!), then $\rring$ is simple (Proposition \ref{GSimpleCommutantSimple}). In Section \ref{SimplicityOfSkewGroupRings} we investigate the simplicity of skew group rings and generalize \cite[Corollary 2.1]{Crow} and \cite[Theorem 2.2]{Crow}, by showing that if $\mathcal{R}_e$ is commutative, then the skew group ring $\gskewring$ is a simple ring if and only if $\mathcal{R}_e$ is $G$-simple and a maximal commutative subring of $\gskewring$ (Theorem \ref{skewringssimple}). As an example, we consider the skew group algebra associated to a dynamical system.

 In Section \ref{topdynsys} we consider the algebraic crossed product $C(X) \rtimes_{\tilde{h}} \Z$ associated to a topological dynamical system $(X,h)$. Under the assumption that $X$ is infinite, we show that $C(X) \rtimes_{\tilde{h}} \Z$ is simple if and only if $(X,h)$ is a minimal dynamical system or equivalently if and only if $C(X)$ is $\Z$-simple and maximal commutative in $C(X) \rtimes_{\tilde{h}} \Z$ (Theorem \ref{algebraicanalogue}). This result is a complete analogue to the well-known result for $C^*$-crossed product algebras associated to topological dynamical systems.

%

\section{Preliminaries}\label{Preliminaries}

Throughout this paper all rings are assumed to be unital and associative and unless otherwise is stated we let $G$ be an arbitrary group with neutral element $e$.

A ring $\mathcal{R}$ is said to be \emph{$G$-graded} if there is a family $\{\mathcal{R}_g\}_{g\in G}$ of additive subgroups of $\mathcal{R}$ such that
\begin{displaymath}
	\mathcal{R} = \bigoplus_{g\in G} \mathcal{R}_g  \quad \text{and} \quad \mathcal{R}_g \mathcal{R}_h \subseteq \mathcal{R}_{gh}
\end{displaymath}
for all $g,h\in G$. Moreover, if $\mathcal{R}_g \mathcal{R}_h = \mathcal{R}_{gh}$ holds for all $g,h \in G$, then $\mathcal{R}$ is said to be \emph{strongly $G$-graded}. The product $\rring_g \rring_h$ is here the usual module product consisting of all finite sums of ring products $r_g r_h$ of elements $r_g\in \rring_g$ and $r_h \in \rring_h$, and not just the set of all such ring products. For any graded ring $\rring$ it follows directly from the gradation that $\rring_e$ is a subring of $\rring$, and that $\rring_g$ is an $\rring_e$-bimodule for each $g\in G$. We shall refer to $\mathcal{R}_g$ as the \emph{homogeneous component of degree} $g\in G$, and in particular to $\rring_e$ as the \emph{neutral component}. Let $U(\rring)$ denote the group of multiplication invertible elements of $\rring$. We shall say that $\rring$ is a $G$-crossed product if $U(\rring) \cap \rring_g \neq \emptyset$ for each $g\in G$.

\subsection{Strongly $G$-graded rings}\label{MiyashitaAction}

For each $G$-graded ring $\mathcal{R}=\bigoplus_{g \in G}\mathcal{R}_{g}$ one has $1_{\mathcal{R}} \in \mathcal{R}_e$ (see \cite[Proposition 1.1.1]{MoGR}), and if we in addition assume that $\mathcal{R}$ is a strongly $G$-graded ring, i.e. $\mathcal{R}_{g} \mathcal{R}_{g^{-1}} = \mathcal{R}_{e}$ for each $g \in G$, then for each $g\in G$ there exists a positive integer $n_{g}$ and elements $a_{g}^{(i)} \in \mathcal{R}_{g}$, $b_{g^{-1}}^{(i)} \in
\mathcal{R}_{g^{-1}}$ for $i \in \{1,\ldots,n_g\}$, such that
\begin{equation}\label{partitionofunity}
\sum_{i=1}^{n_{g}}a^{(i)}_{g} \, b^{(i)}_{g^{-1}}=1_{\mathcal{R}}.
\end{equation}
For every $\lambda\in C_{\mathcal{R}}(\mathcal{R}_e)$, and in particular for every $\lambda\in Z(\mathcal{R}_e) \subseteq C_{\mathcal{R}}(\mathcal{R}_e)$, and
$g\in G$ we define

\begin{equation}\label{defofsigma}
	\sigma_g(\lambda)=\sum_{i=1}^{n_{g}}a^{(i)}_{g} \, \lambda \, b^{(i)}_{g^{-1}}.
\end{equation}
The definition of $\sigma_g$ is independent of the choice of the $a_g^{(i)}$'s and $b_{b^{-1}}^{(i)}$'s (see e.g. \cite{OinSilTheVav}).
For a proof of the following lemma we refer to \cite[Lemma 3]{OinSilTheVav}.

\begin{lemma}\label{karpilovskysats}
Let $\mathcal{R} = \bigoplus_{g\in G} \mathcal{R}_g$ be a strongly $G$-graded ring, $g\in G$ and write\\
$\sum_{i=1}^{n_{g}}a^{(i)}_{g}b^{(i)}_{g^{-1}}=1_{\mathcal{R}}$ for some $n_g > 0$ and $a_{g}^{(i)} \in \mathcal{R}_{g}$, $b_{g^{-1}}^{(i)} \in
\mathcal{R}_{g^{-1}}$ for $i \in \{1,\ldots,n_g\}$. For each $\lambda \in C_{\mathcal{R}}(\mathcal{R}_e)$ define $\sigma_g(\lambda)$ by $\sigma_g(\lambda)=\sum_{i=1}^{n_{g}}a^{(i)}_{g} \, \lambda \, b^{(i)}_{g^{-1}}$. The following properties hold:
\begin{enumerate}
\item[(i)] $\sigma_g(\lambda)$ is a unique element of $\mathcal{R}$ satisfying
\begin{equation}\label{sigmarelation}
	r_g \, \lambda = \sigma_g(\lambda) \, r_g, \quad \forall \, r_g \in \mathcal{R}_g.
\end{equation}
Furthermore, $\sigma_g(\lambda) \in C_{\mathcal{R}}(\mathcal{R}_e)$ and if $\lambda \in Z(\mathcal{R}_e)$, then $\sigma_g(\lambda) \in Z(\mathcal{R}_e)$.
\item[(ii)] The group $G$ acts as automorphisms of the rings $C_{\mathcal{R}}(\mathcal{R}_e)$ and $Z(\mathcal{R}_e)$, with each $g\in G$ sending any $\lambda \in C_{\mathcal{R}}(\mathcal{R}_e)$ and $\lambda \in Z(\mathcal{R}_e)$, respectively, into $\sigma_g(\lambda)$.
\item[(iii)] $Z(\mathcal{R})=\{\lambda \in C_{\mathcal{R}}(\mathcal{R}_e) \, \mid \, \sigma_g(\lambda)=\lambda, \,\, \forall g\in G\}$, i.e. $Z(\mathcal{R})$ is the fixed subring $C_{\mathcal{R}}(\mathcal{R}_e)^G$ of $C_{\mathcal{R}}(\mathcal{R}_e)$ with respect to the action of $G$.
\end{enumerate}
\end{lemma}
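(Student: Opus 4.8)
The plan is to make the identity \eqref{sigmarelation} the backbone of everything: I would first establish that $\sigma_g(\lambda)$ is \emph{the} element of $\mathcal{R}$ satisfying $r_g\lambda=\sigma_g(\lambda)r_g$ for all $r_g\in\mathcal{R}_g$, and then deduce parts (i)--(iii) essentially formally from this characterisation together with the partition of unity \eqref{partitionofunity}. \textbf{Step 1 (the characterising identity and uniqueness).} For $r_g\in\mathcal{R}_g$ one computes $\sigma_g(\lambda)r_g=\sum_i a^{(i)}_g\,\lambda\,\bigl(b^{(i)}_{g^{-1}}r_g\bigr)$; since $b^{(i)}_{g^{-1}}r_g\in\mathcal{R}_{g^{-1}}\mathcal{R}_g\subseteq\mathcal{R}_e$ and $\lambda\in C_{\mathcal{R}}(\mathcal{R}_e)$, the factor $\lambda$ may be moved to the right past $b^{(i)}_{g^{-1}}r_g$, and \eqref{partitionofunity} collapses the sum to $r_g\lambda$, which is \eqref{sigmarelation}. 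Uniqueness uses the partition of unity the other way round: if $s\in\mathcal{R}$ satisfies $s\,r_g=r_g\lambda$ for all $r_g\in\mathcal{R}_g$, then taking $r_g=a^{(i)}_g$, right-multiplying by $b^{(i)}_{g^{-1}}$ and summing over $i$ gives $s=s\cdot 1_{\mathcal{R}}=\sigma_g(\lambda)$. In particular $\sigma_g(\lambda)$ does not depend on the chosen $a^{(i)}_g$, $b^{(i)}_{g^{-1}}$.

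\textbf{Step 2 ($\sigma_g$ stabilises $C_{\mathcal{R}}(\mathcal{R}_e)$ and $Z(\mathcal{R}_e)$).} Given $r_e\in\mathcal{R}_e$, applying \eqref{sigmarelation} first to $r_e r_g\in\mathcal{R}_g$ and then to $r_g$ yields $\sigma_g(\lambda)r_e r_g=r_e\,\sigma_g(\lambda)\,r_g$, so $\bigl(\sigma_g(\lambda)r_e-r_e\sigma_g(\lambda)\bigr)r_g=0$ for every $r_g\in\mathcal{R}_g$; the cancellation trick of Step 1 then forces $\sigma_g(\lambda)r_e=r_e\sigma_g(\lambda)$, i.e. $\sigma_g(\lambda)\in C_{\mathcal{R}}(\mathcal{R}_e)$. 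If moreover $\lambda\in Z(\mathcal{R}_e)\subseteq\mathcal{R}_e$, then $\sigma_g(\lambda)\in\mathcal{R}_g\mathcal{R}_e\mathcal{R}_{g^{-1}}\subseteq\mathcal{R}_e$, and together with the previous sentence this gives $\sigma_g(\lambda)\in Z(\mathcal{R}_e)$. This proves (i).

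\textbf{Step 3 (the group action and the centre).} Uniqueness turns \eqref{sigmarelation} into a calculus: additivity of $\sigma_g$ is immediate from the definition; $r_g\cdot 1_{\mathcal{R}}=1_{\mathcal{R}}\cdot r_g$ gives $\sigma_g(1_{\mathcal{R}})=1_{\mathcal{R}}$; reading $r_g(\lambda\mu)=\sigma_g(\lambda)r_g\mu=\sigma_g(\lambda)\sigma_g(\mu)r_g$ gives $\sigma_g(\lambda\mu)=\sigma_g(\lambda)\sigma_g(\mu)$; and $r_e\lambda=\lambda r_e$ for $r_e\in\mathcal{R}_e$ gives $\sigma_e=\identity$. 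For composition, if $r_g\in\mathcal{R}_g$ and $r_h\in\mathcal{R}_h$ then $(r_gr_h)\lambda=\sigma_g(\sigma_h(\lambda))(r_gr_h)$, and since $\mathcal{R}_{gh}=\mathcal{R}_g\mathcal{R}_h$ every element of $\mathcal{R}_{gh}$ is a finite sum of such products, so $\sigma_{gh}=\sigma_g\circ\sigma_h$ by uniqueness; hence $\sigma_g\circ\sigma_{g^{-1}}=\identity=\sigma_{g^{-1}}\circ\sigma_g$, each $\sigma_g$ is a ring automorphism, and $g\mapsto\sigma_g$ is a $G$-action on $C_{\mathcal{R}}(\mathcal{R}_e)$ which restricts to $Z(\mathcal{R}_e)$ by Step 2, giving (ii). For (iii): because $\mathcal{R}$ is graded, $\lambda\in Z(\mathcal{R})$ if and only if $\lambda$ commutes with every homogeneous element, i.e. $\lambda\in C_{\mathcal{R}}(\mathcal{R}_e)$ and $\lambda r_g=r_g\lambda$ for all $g$ and all $r_g\in\mathcal{R}_g$; for such $\lambda$ the last condition reads $\lambda r_g=\sigma_g(\lambda)r_g$, which by the cancellation of Step 1 is equivalent to $\sigma_g(\lambda)=\lambda$, so $Z(\mathcal{R})=C_{\mathcal{R}}(\mathcal{R}_e)^G$.

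I do not expect a genuine obstacle here: the single idea carrying the whole argument is the cancellation in Step 1, namely that $b^{(i)}_{g^{-1}}\mathcal{R}_g\subseteq\mathcal{R}_e$ combined with \eqref{partitionofunity} simultaneously verifies the characterising identity and yields uniqueness (equivalently, $t\,r_g=0$ for all $r_g\in\mathcal{R}_g$ implies $t=0$). The only real care required is bookkeeping of the left/right placement of homogeneous factors, since $\lambda$ need not be homogeneous.
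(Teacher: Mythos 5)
Your proof is correct and is essentially the standard argument (the paper itself only cites \cite{OinSilTheVav} for this lemma rather than proving it): the characterising identity \eqref{sigmarelation} together with the cancellation principle ``$t\,\mathcal{R}_g=\{0\}$ implies $t=0$'' (which is Lemma~\ref{annihilator} of the paper) carries all three parts. All the individual steps check out, including the points that need care --- that $\sigma_h(\lambda)\in C_{\mathcal{R}}(\mathcal{R}_e)$ must be known before the composition identity $\sigma_{gh}=\sigma_g\circ\sigma_h$ can be read off, and that membership of $\sigma_g(\lambda)$ in $\mathcal{R}_e$ for central $\lambda$ follows from $\mathcal{R}_g\mathcal{R}_e\mathcal{R}_{g^{-1}}\subseteq\mathcal{R}_e$.
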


\noindent The map $\sigma$, defined in Lemma \ref{karpilovskysats}, will be referred to as the \emph{canonical action}.

\subsection{The Picard group of $\mathcal{R}_e$, $\Pic(\mathcal{R}_e)$}

We shall now give a brief description of the Picard group of $\mathcal{R}_e$ in a strongly graded ring $\mathcal{R} = \bigoplus_{g\in G} \mathcal{R}_g$. For more details we refer to \cite{CaenOyst}.

\begin{definition}[Invertible module]
Let $A$ be a ring. An $A$-bimodule $M$ is said to be \emph{invertible} if and only if there exists an $A$-bimodule $N$ such that $M \otimes_{A} N \cong A \cong N \otimes_{A} M$ as $A$-bimodules.
\end{definition}

Given a ring $A$, the Picard group of $A$, denoted $\Pic(A)$, is defined as the set of $A$-bimodule isomorphism classes of invertible $A$-bimodules, and the group operation is given by $\otimes_{A}$.

If $\mathcal{R} = \bigoplus_{g\in G} \mathcal{R}_g$ is a strongly $G$-graded ring, the homomorphism of $\rring_g \otimes_{\rring_e} \rring_h$ into $\rring_{gh}$ sending $r_g \otimes r_h$ into $r_g r_h$ for all $r_g \in \rring_g$ and $r_h \in \rring_h$, is an isomorphism of $\rring_e$-bimodules, for any $g,h\in G$ (see \cite[p.336]{Dade1982}). This implies that $\rring_g$ is an invertible $\rring_e$-bimodule for each $g\in G$. We may now define a group homomorphism $\psi : G \to \Pic(\rring_e)$, $g\mapsto [\rring_g]$, i.e. each $g\in G$ is mapped to the isomorphism class inside $\Pic(\rring_e)$ to which the invertible $\rring_e$-bimodule $\rring_g$ belongs.

\subsection{Crystalline graded rings}

We shall begin this section by recalling the definition of a crystalline graded ring. We would also like to emphasize that rings belonging to this class are in general not strongly graded.

\begin{definition}[Pre-crystalline graded ring]\label{precrystallinegradedring}
An associative and unital ring $\aalg$ is said to be \emph{pre-crystalline graded} if
\begin{enumerate}
	\item[(i)] there is a group $G$ (with neutral element $e$),
	\item[(ii)] there is a map $u : G \to \aalg, \quad g \mapsto u_g$ such that $u_e = 1_{\aalg}$ and $u_g \neq 0$ for every $g\in G$,
	\item[(iii)] there is a subring $\aalg_0 \subseteq \aalg$ containing $1_{\aalg}$,
\end{enumerate}
	such that the following conditions are satisfied:
\begin{enumerate}
	\item[(P1)] $\aalg = \bigoplus_{g\in G} \aalg_0 \, u_g$ ;
	\item[(P2)] For every $g\in G$, $u_g \, \aalg_0 = \aalg_0 \, u_g$ is a free left $\aalg_0$-module of rank one ;
	\item[(P3)] The decomposition in P1 makes $\aalg$ into a $G$-graded ring with $\aalg_0 = \aalg_e$.
\end{enumerate}
\end{definition}

\begin{lemma}[see \cite{CGR}]\label{sigmaalphamaps}
With notation and definitions as above:
\begin{enumerate}
	\item[{\rm (i)}] For every $g\in G$, there is a set map $\sigma_g : \aalg_0 \to \aalg_0$ defined by $u_g \, a = \sigma_g(a) \, u_g$ for $a \in \aalg_0$. The map $\sigma_g$ is a surjective ring morphism. Moreover, $\sigma_e = \identity_{\aalg_0}$.
	\item[{\rm (ii)}] There is a set map $\alpha : G \times G \to \aalg_0$ defined by $u_s \, u_t = \alpha(s,t) \, u_{st}$ for $s,t\in G$. For any triple $s,t,w \in G$ and $a\in \aalg_0$ the following equalities hold:
	\begin{eqnarray}
		\alpha(s,t)\alpha(st,w) = \sigma_s(\alpha(t,w)) \alpha(s,tw) \\
		\sigma_s(\sigma_t(a)) \alpha(s,t) = \alpha(s,t) \sigma_{st}(a)	\label{eq25}
	\end{eqnarray}
	\item[{\rm (iii)}] For every $g\in G$ we have $\alpha(g,e)=\alpha(e,g)=1_{\aalg_0}$ and $\alpha(g,g^{-1})=\sigma_g(\alpha(g^{-1},g))$.
\end{enumerate}
\end{lemma}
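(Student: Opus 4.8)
The plan is to derive every assertion directly from the axioms (P1)--(P3), using repeatedly the cancellation principle that, since by (P2) each $\aalg_0 u_g$ is a free left $\aalg_0$-module of rank one with basis $\{u_g\}$, an identity $b\,u_g = b'\,u_g$ with $b,b'\in\aalg_0$ forces $b=b'$ (and (P1), (P3) identify the homogeneous component of degree $g$ with $\aalg_0 u_g$, since $\aalg_0=\aalg_e$). For (i), fix $g\in G$; by (P2) each $a\in\aalg_0$ satisfies $u_g a\in u_g\aalg_0=\aalg_0 u_g$, so there is a unique $\sigma_g(a)\in\aalg_0$ with $u_g a=\sigma_g(a)\,u_g$, which defines the map. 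Applying $u_g(\cdot)$ to $a+a'$ and to $aa'$, pushing all $\aalg_0$-coefficients to the left, and cancelling $u_g$ gives additivity and multiplicativity of $\sigma_g$; from $u_g\cdot 1_{\aalg}=1_{\aalg}\cdot u_g$ together with $1_{\aalg}=1_{\aalg_0}$ one gets $\sigma_g(1_{\aalg_0})=1_{\aalg_0}$, so $\sigma_g$ is a ring morphism. Surjectivity uses the reverse inclusion in (P2): given $b\in\aalg_0$, from $b\,u_g\in\aalg_0 u_g=u_g\aalg_0$ we get $b\,u_g=u_g c=\sigma_g(c)\,u_g$ for some $c$, hence $b=\sigma_g(c)$. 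Finally $u_e=1_{\aalg}$ gives $u_e a=a=a\,u_e$, so $\sigma_e=\identity_{\aalg_0}$.

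For (ii), $u_s u_t$ lies in $\aalg_s\aalg_t\subseteq\aalg_{st}=\aalg_0 u_{st}$ by (P3) and (P1), so there is a unique $\alpha(s,t)\in\aalg_0$ with $u_s u_t=\alpha(s,t)\,u_{st}$. The two displayed identities come from expanding the associativity relations $(u_s u_t)u_w=u_s(u_t u_w)$ and $(u_s u_t)a=u_s(u_t a)$ for $a\in\aalg_0$: in each case one rewrites both sides as a single left-$\aalg_0$-multiple of $u_{stw}$, respectively of $u_{st}$, by repeatedly applying the relations $u_g b=\sigma_g(b)u_g$ and $u_g u_h=\alpha(g,h)u_{gh}$, and then cancels the basis element by rank-one freeness. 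The first computation yields $\alpha(s,t)\alpha(st,w)=\sigma_s(\alpha(t,w))\alpha(s,tw)$ and the second yields \eqref{eq25}.

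For (iii), cancelling $u_g$ in $u_g u_e=u_g=1_{\aalg_0}u_g$ and in $u_e u_g=u_g=1_{\aalg_0}u_g$ gives $\alpha(g,e)=\alpha(e,g)=1_{\aalg_0}$; then substituting $s=g$, $t=g^{-1}$, $w=g$ into the first identity of (ii) and using these two normalizations collapses it to $\alpha(g,g^{-1})=\sigma_g(\alpha(g^{-1},g))$.

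I do not expect a genuine obstacle: the content is entirely a matter of unwinding the definitions. The only point worth emphasizing is what the argument does \emph{not} yield --- (P2) asserts freeness of rank one only as a \emph{left} $\aalg_0$-module, so $\sigma_g$ is produced merely as a surjective ring endomorphism of $\aalg_0$ and need not be injective, which is exactly why pre-crystalline graded rings are in general not strongly graded. The mild care needed is purely bookkeeping in part (ii): before invoking the cancellation principle one must make sure each side of an associativity relation has been brought into the normal form ``(element of $\aalg_0$)$\,\cdot u_\bullet$'' with the same degree $\bullet$ on both sides.
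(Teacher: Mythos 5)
Your proof is correct: the paper itself gives no argument for this lemma (it is quoted from \cite{CGR}), and your derivation --- defining $\sigma_g$ and $\alpha$ via (P2) and the gradation, then reading off all identities from associativity and cancelling the basis element $u_g$ in the free rank-one left module $\aalg_0 u_g$ --- is exactly the standard one from that reference. Your closing remark that (P2) only yields surjectivity of $\sigma_g$ (injectivity being equivalent to $S(G)$-torsion freeness, cf.\ the next lemma in the paper) is also the right thing to flag.
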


A pre-crystalline graded ring $\aalg$ with the above properties will be denoted by $\gcrystalline$ and each element of this ring is written as a sum $\sum_{g\in G} r_g \, u_g$ with coefficients $r_g \in \aalg_0$, of which only finitely many are non-zero. In \cite{CGR} it was shown that for pre-crystalline graded rings, the elements $\alpha(s,t)$ are normalizing elements of $\aalg_0$, i.e. $\aalg_0 \,\alpha(s,t)=\alpha(s,t)\,\aalg_0$ for each $s,t \in G$. For a pre-crystalline graded ring $\gcrystalline$, we let $S(G)$ denote the multiplicative set in $\aalg_0$ generated by $\{\alpha(g,g^{-1}) \, \mid \, g\in G\}$ and let $S(G \times G)$ denote the multiplicative set generated by $\{\alpha(g,h) \mid g,h \in G\}$.

\begin{lemma}[see \cite{CGR}]\label{torsionfree}
If $\aalg=\gcrystalline$ is a pre-crystalline graded ring, then the following assertions are equivalent:
\begin{enumerate}
	\item[{\rm (i)}] $\aalg_0$ is $S(G)$-torsion free.
 	\item[{\rm (ii)}] $\aalg$ is $S(G)$-torsion free.
	\item[{\rm (iii)}] $\alpha(g,g^{-1}) a_0 = 0$ for some $g\in G$ implies $a_0 = 0$.
	\item[{\rm (iv)}] $\alpha(g,h) a_0 =0$ for some $g,h\in G$ implies $a_0 =0$.
	\item[{\rm (v)}] $\aalg_0 \,u_g = u_g \,\aalg_0$ is also free as a right $\aalg_0$-module, with basis $u_g$, for every $g \in G$.
	\item[{\rm (vi)}] For every $g\in G$, $\sigma_g$ is bijective and hence a ring automorphism of $\aalg_0$.
\end{enumerate}
\end{lemma}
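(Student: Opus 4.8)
The plan is to prove the six conditions equivalent by a cycle of implications, using throughout the relations of Lemma~\ref{sigmaalphamaps} --- especially $u_g a=\sigma_g(a)u_g$, $u_su_t=\alpha(s,t)u_{st}$, $\alpha(g,g^{-1})=\sigma_g(\alpha(g^{-1},g))$, and the two cocycle identities of part~(ii) --- together with the fact that $\aalg_0u_g$ is free of rank one with basis $u_g$, so that $b\,u_g=0$ with $b\in\aalg_0$ forces $b=0$ (this is already implicit in the well-definedness of $\sigma_g$). The bridge between the $u_g$'s and the multiplicative sets $S(G)$, $S(G\times G)$ is that, inside $\aalg_e=\aalg_0$, one has $\alpha(g,g^{-1})=u_gu_{g^{-1}}$ and $\alpha(g,h)\,u_{gh}=u_gu_h$.

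Several implications are routine and I would dispatch them first. For (i) $\Leftrightarrow$ (ii): since $\aalg_0\subseteq\aalg$, (ii) $\Rightarrow$ (i) is immediate; conversely, if $s\in S(G)$ kills $x=\sum_g r_gu_g$, then $\sum_g(sr_g)u_g=0$, so $sr_g=0$ for every $g$ by the basis property, hence each $r_g=0$ by (i). The implications (iv) $\Rightarrow$ (iii) and (i) $\Rightarrow$ (iii) are trivial, and (iii) $\Rightarrow$ (i) follows by writing an element of $S(G)$ as a word $\alpha(g_1,g_1^{-1})\cdots\alpha(g_n,g_n^{-1})$, stripping off the leftmost factor with (iii), and inducting on $n$. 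Next, (iii) $\Rightarrow$ (vi): if $\sigma_g(c)=0$ then $u_gc=\sigma_g(c)u_g=0$, whence $\alpha(g^{-1},g)\,c=u_{g^{-1}}u_gc=0$; since $\alpha(g^{-1},g)=\alpha(g^{-1},(g^{-1})^{-1})$ is a generator of $S(G)$, (iii) gives $c=0$, so $\sigma_g$ is injective, and being surjective by Lemma~\ref{sigmaalphamaps}(i) it is a ring automorphism. Finally (v) $\Leftrightarrow$ (vi) is pure unwinding: $\aalg_0u_g=u_g\aalg_0$ is automatically spanned by $u_g$ as a right $\aalg_0$-module, so it is free with basis $u_g$ precisely when $u_ga=0\Rightarrow a=0$, i.e.\ (via $u_ga=\sigma_g(a)u_g$) precisely when $\sigma_g$ is injective.

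To bring (iv) into the cycle I would prove (iii) $\Rightarrow$ (iv) using the first cocycle identity of Lemma~\ref{sigmaalphamaps}(ii) with $(s,t,w)=(g,g^{-1},gh)$, which, since $\alpha(e,gh)=1$, collapses to
\[
  \alpha(g,g^{-1})=\sigma_g(\alpha(g^{-1},gh))\,\alpha(g,h).
\]
Hence $\alpha(g,h)\,a_0=0$ forces $\alpha(g,g^{-1})\,a_0=0$, and then (iii) gives $a_0=0$. At this point (i) $\Leftrightarrow$ (ii) $\Leftrightarrow$ (iii) $\Leftrightarrow$ (iv) is settled, together with (iii) $\Rightarrow$ (v) $\Leftrightarrow$ (vi).

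The remaining task --- and the step I expect to be the main obstacle --- is to close the loop by recovering a torsion condition from (v)/(vi). The naive route stalls: from (v), $u_g$ acts as a right-regular element on $\aalg_0u_{g^{-1}}$, which one checks is equivalent to right-regularity of $\alpha(g,g^{-1})$ in $\aalg_0$, but trading this for the left-regularity demanded by (iii) runs into the left/right asymmetry of the normalizing elements $\alpha(s,t)$ and, more essentially, into identity~\eqref{eq25} in the form $\sigma_s\sigma_{s^{-1}}(a)\,\alpha(s,s^{-1})=\alpha(s,s^{-1})\,a$, which returns any such cancellation argument to its starting point. The plan for this step is therefore to combine \emph{both} cocycle identities of Lemma~\ref{sigmaalphamaps}(ii) with the bijectivity of the $\sigma_g$ provided by (vi), exhibiting a suitable product of the $\alpha$'s as a two-sided factor of a manifestly regular element of $\aalg_0$ (this is where \cite{CGR} does the substantive work); once that is in place the torsion conditions drop out, the rest being bookkeeping with the relations of Lemma~\ref{sigmaalphamaps}.
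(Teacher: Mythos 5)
The paper does not actually prove Lemma \ref{torsionfree}; it imports it verbatim from \cite{CGR}, so there is no in-paper argument to measure yours against. Judged on its own terms, the portion of your proof that is written out is correct and cleanly organized: the reduction (i) $\Leftrightarrow$ (ii) via the direct sum decomposition and left-freeness, the chain (i) $\Leftrightarrow$ (iii) $\Leftrightarrow$ (iv) (your use of the first cocycle identity with $(s,t,w)=(g,g^{-1},gh)$ to obtain $\alpha(g,g^{-1})=\sigma_g(\alpha(g^{-1},gh))\,\alpha(g,h)$ is exactly the right trick), the implication (iii) $\Rightarrow$ (vi) via $\alpha(g^{-1},g)\,c=u_{g^{-1}}u_g c$, and the unwinding (v) $\Leftrightarrow$ (vi) are all fine.

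The problem is the step you yourself flag: nothing in your text establishes (v)/(vi) $\Rightarrow$ (i)--(iv), and your closing paragraph is a plan, not a proof. The gap is not cosmetic. As you observe, identity \eqref{eq25} with $(s,t)=(g,g^{-1})$ gives $\sigma_g\sigma_{g^{-1}}(a)\,\alpha(g,g^{-1})=\alpha(g,g^{-1})\,a$, so under (vi) left regularity of $\alpha(g,g^{-1})$ is \emph{equivalent} to its right regularity, and neither is produced by the formal relations of Lemma \ref{sigmaalphamaps}. In fact the implication cannot be closed by bookkeeping at all: take $\aalg=\C[u]/(u^4)$ with the parity $\Z_2$-grading, $\aalg_0=\C\oplus\C u^2$, $u_{\bar 0}=1$, $u_{\bar 1}=u$. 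Conditions (P1)--(P3) hold (the map $a\mapsto au$ is injective on $\aalg_0$), $\sigma_{\bar 1}=\identity_{\aalg_0}$ is bijective, and $\aalg_0 u$ is free on both sides with basis $u$, so (v) and (vi) are satisfied; yet $\alpha(\bar 1,\bar 1)=u^2$ satisfies $\alpha(\bar 1,\bar 1)\,u^2=0$ with $u^2\neq 0$, so (iii) fails. Thus either the statement as quoted needs an additional hypothesis (this is worth checking against the precise formulation in \cite{CGR}), or at the very least the missing implication requires input not contained in Definition \ref{precrystallinegradedring} and Lemma \ref{sigmaalphamaps}. Either way, what you have actually established is (i) $\Leftrightarrow$ (ii) $\Leftrightarrow$ (iii) $\Leftrightarrow$ (iv) $\Rightarrow$ (v) $\Leftrightarrow$ (vi), and you should not expect the strategy you announce for the converse to succeed.
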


\begin{definition}[Crystalline graded ring]
A pre-crystalline graded ring $\gcrystalline$, which is $S(G)$-torsion free, is said to be a \emph{crystalline graded ring}.
\end{definition}

Note that $G$-crossed products are examples of crystalline graded rings.

\section{Ideals in strongly graded rings}\label{intersectionnewresult}

In this section we shall improve some earlier results. We begin by making a slight generalization of Theorem \ref{stronglycommutantsnitt} (\cite[Corollary 3]{OinSilTheVav}). The following proof is based on the same technique as in \cite{OinSilTheVav}, but we will make it somewhat shorter by doing a proof by contra positivity.

\begin{theorem}\label{newzentersnitt}
If $\mathcal{R} = \bigoplus_{g\in G} \mathcal{R}_g$ is a strongly $G$-graded ring, then
\begin{displaymath}
	I\cap C_{\rring}(Z(\mathcal{R}_e)) \neq\{0\}
\end{displaymath}
for each nonzero ideal $I$ in $\mathcal{R}$.
\end{theorem}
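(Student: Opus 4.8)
The plan is to prove this by contrapositive, mimicking the technique of \cite{OinSilTheVav} but streamlining it. So suppose $I$ is an ideal of $\rring$ with $I \cap C_\rring(Z(\rring_e)) = \{0\}$; the goal is to show $I = \{0\}$. Pick any nonzero $x \in I$ and write it in terms of the grading, $x = \sum_{g \in G} x_g$ with $x_g \in \rring_g$, and let $\supp(x) = \{g : x_g \neq 0\}$, a finite set. Among all nonzero elements of $I$, choose $x$ so that $N := \abs{\supp(x)}$ is minimal; we will derive a contradiction by showing that such an $x$ can be forced into $C_\rring(Z(\rring_e))$, hence must be $0$.

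The key maneuver is the following. Fix $g_0 \in \supp(x)$. Using that $\rring$ is strongly graded, write $\sum_{i} a^{(i)}_{g_0^{-1}} b^{(i)}_{g_0} = 1_\rring$ with $a^{(i)}_{g_0^{-1}} \in \rring_{g_0^{-1}}$, $b^{(i)}_{g_0} \in \rring_{g_0}$. Then $y := \sum_i a^{(i)}_{g_0^{-1}} x b^{(i)}_{g_0}$ lies in $I$ (as $I$ is an ideal), and a degree count shows the $e$-component of $y$ is $\sum_i a^{(i)}_{g_0^{-1}} x_{g_0} b^{(i)}_{g_0}$, which is nonzero — indeed, if it were zero then by the argument for well-definedness of $\sigma$ in Lemma~\ref{karpilovskysats} (or a direct manipulation: multiply by a partition of unity for $g_0$ on the left) one deduces $x_{g_0}=0$, contrary to $g_0 \in \supp(x)$. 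Moreover $\supp(y) \subseteq g_0^{-1}\supp(x)g_0$ need not hold in the non-commutative group case, but one still has $e \in \supp(y)$ and $\abs{\supp(y)} \leq N$; by minimality $\abs{\supp(y)} = N$. So after replacing $x$ by $y$ we may assume $e \in \supp(x)$ and $x_e \neq 0$, with $N$ still minimal. Next, for any $\lambda \in Z(\rring_e)$, consider $\lambda x - x \lambda \in I$. Its $e$-component is $\lambda x_e - x_e \lambda = 0$ since $x_e \in \rring_e$ and $\lambda \in Z(\rring_e)$. Hence $\lambda x - x\lambda$ has strictly smaller support than $x$ (it has lost the $e$-term), so by minimality $\lambda x - x\lambda = 0$. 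Since $\lambda \in Z(\rring_e)$ was arbitrary, $x \in C_\rring(Z(\rring_e))$, so $x \in I \cap C_\rring(Z(\rring_e)) = \{0\}$, contradicting $x \neq 0$. Therefore $I = \{0\}$.

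The main obstacle I anticipate is the support-conjugation bookkeeping in a general (non-abelian) group $G$: the naive claim that conjugating $x$ by homogeneous elements of degree $g_0^{-1}$ and $g_0$ maps the $g$-component into degree $g_0^{-1}gg_0$ is correct, but one must be careful that (a) $\abs{\supp(y)} \le N$ genuinely holds — this is fine because $g \mapsto g_0^{-1} g g_0$ is a bijection of $G$, so distinct degrees stay distinct and $\supp(y) \subseteq g_0^{-1}\supp(x)g_0$ actually does hold after all, giving $\abs{\supp(y)} \le N$ — and (b) the $e$-component of $y$ is nonzero. Point (b) is the genuinely delicate step and is exactly where strong gradedness is used: from $\sum_i a^{(i)}_{g_0^{-1}} x_{g_0} b^{(i)}_{g_0} = 0$ one wants $x_{g_0} = 0$. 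To see this, multiply on the left by an arbitrary $c_{g_0} \in \rring_{g_0}$ and on the right by an arbitrary $d_{g_0^{-1}} \in \rring_{g_0^{-1}}$ and sum against a partition of unity; alternatively invoke that the multiplication map $\rring_{g_0^{-1}} \otimes_{\rring_e} \rring_{g_0} \to \rring_e$ is an isomorphism of $\rring_e$-bimodules, so the element $\sum_i a^{(i)}_{g_0^{-1}} \otimes b^{(i)}_{g_0}$ corresponds to $1_\rring$, and applying the $\rring_e$-bimodule map $m \otimes n \mapsto m x_{g_0}' n$ appropriately forces $x_{g_0}=0$ — this is precisely the uniqueness-type argument already cited from \cite{OinSilTheVav} for the well-definedness of $\sigma_g$. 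Once this lemma-level fact is in hand, the rest of the proof is the short minimal-support argument sketched above.
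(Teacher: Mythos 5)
Your overall strategy (pick a nonzero element of $I$ of minimal support, force $e$ into its support, then commutate against $Z(\rring_e)$ to strip the $e$-component) is the right one and is essentially the paper's, but the step that is supposed to move $g_0$ into the support at $e$ is broken in two ways. First, the degree count for $y=\sum_i a^{(i)}_{g_0^{-1}}\,x\,b^{(i)}_{g_0}$ is wrong: the term $a^{(i)}_{g_0^{-1}}\,x_{g}\,b^{(i)}_{g_0}$ lies in $\rring_{g_0^{-1}gg_0}$, so the $e$-component of $y$ is $\sum_i a^{(i)}_{g_0^{-1}}\,x_{e}\,b^{(i)}_{g_0}$ (conjugation fixes $e$), whereas $\sum_i a^{(i)}_{g_0^{-1}}\,x_{g_0}\,b^{(i)}_{g_0}$ sits in degree $g_0^{-1}g_0g_0=g_0$. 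In particular, if $x_e=0$ your sandwich does not put $e$ into $\supp(y)$ at all. Second, and more seriously, the nonvanishing claim $\sum_i a^{(i)}_{g_0^{-1}}\,x_{g_0}\,b^{(i)}_{g_0}\neq 0$ is false for a general nonzero homogeneous $x_{g_0}$: the uniqueness argument behind Lemma \ref{karpilovskysats} requires the sandwiched element to lie in $C_{\rring}(\rring_e)$, so that it can be slid past elements of $\rring_e$; it does not apply to an arbitrary $x_{g_0}\in\rring_{g_0}$. Concretely, take $\rring=M_2(\C)$ with $\rring_0$ the diagonal and $\rring_1$ the antidiagonal matrices (a strong $\Z_2$-grading); then $E_{12}E_{21}+E_{21}E_{12}=1_{\rring}$, yet $E_{12}E_{12}E_{21}+E_{21}E_{12}E_{12}=0$ although $E_{12}\neq 0$.

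The repair is exactly the paper's move: multiply on one side only. Since $\sum_i a^{(i)}_{g_0}\,b^{(i)}_{g_0^{-1}}\,x_{g_0}=x_{g_0}\neq 0$, some single factor satisfies $b^{(i)}_{g_0^{-1}}x_{g_0}\neq 0$ (this is the Lemma \ref{annihilator}--type argument); setting $x'=b^{(i)}_{g_0^{-1}}x$ one gets $x'\in I$, $x'\neq 0$, $\supp(x')\subseteq g_0^{-1}\supp(x)$ so $\#\supp(x')\le N$, and $e\in\supp(x')$ because the $e$-component of $x'$ is precisely $b^{(i)}_{g_0^{-1}}x_{g_0}$. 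From that point on your commutator step is correct and coincides with the paper's: for $\lambda\in Z(\rring_e)$ the element $\lambda x'-x'\lambda\in I$ has $e$ removed from its support, hence vanishes by minimality, so $x'\in I\cap C_{\rring}(Z(\rring_e))=\{0\}$, a contradiction.
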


\begin{proof}
Let $I$ be an ideal of $\rring$ such that $I \cap C_{\rring}(Z(\mathcal{R}_e)) =\{0\}$. If we can show that $I =  \{0\}$, then the desired conclusion follows by contra positivity. Take some $x = \sum_{g\in G} x_g \in I$. If $x\in I \cap C_{\rring}(Z(\mathcal{R}_e))$, then $x=0$ by the assumption. Therefore, assume that $x\in I \setminus C_{\rring}(Z(\mathcal{R}_e))$ and that $x$ is chosen such that $N= \# \supp(x) = \# \{g\in G\mid x_g \neq 0\}$ is as small as possible. Suppose that $N$ is positive. We are now seeking for a contradiction. Take some arbitrary $t \in \supp(x)$ and choose some $r_{t^{-1}} \in \rring_{t^{-1}}$ such that $x' = r_{t^{-1}} x \neq 0$ and $e\in \supp(x')$. It is always possible to choose such an $r_{t^{-1}}$. Indeed, if $1_{\rring} = \sum_{i=1}^{n_{t}} a_{t}^{(i)} b_{t^{-1}}^{(i)}$, as in \eqref{partitionofunity}, then $b_{t^{-1}}^{(i)} x_t$ must be nonzero for some $i \in \{1,\ldots,n_{t}\}$, for otherwise we would have $1_\rring x_t=0$ which would be contradictory (since $x_t \neq 0$). Note that $x' \in I$. Since $I \cap C_{\rring}(Z(\mathcal{R}_e)) =\{0\}$ we conclude that $x' \in I \setminus C_{\rring}(Z(\mathcal{R}_e))$. Take an arbitrary $a\in Z(\rring_e)$. Then $x''=ax'-x'a \in I$ but clearly $e\notin \supp(x'')$ and hence by the assumption on $N$ we get that $x''=0$. Since $a\in Z(\rring_e)$ was chosen arbitrarily we get $x' \in C_{\rring}(Z(\mathcal{R}_e))$ which is a contradiction. Therefore $N=0$ and hence $x=0$. Since $x \in I$ was arbitrarily chosen, we conclude that $I = \{0\}$.
\end{proof}

\begin{remark}
Note that $\rring_e \subseteq C_{\rring}(Z(\mathcal{R}_e))$. If $\rring_e$ is commutative, then clearly $\rring_e = Z(\rring_e)$ and we obtain Theorem \ref{stronglycommutantsnitt} as a special case of Theorem \ref{newzentersnitt}.
\end{remark}


For a crystalline graded ring $\gcrystalline$, one may carry out a proof analogous to the proof of Theorem \ref{newzentersnitt}. We shall therefore omit the proof, but the conclusion is the following theorem which generalizes \cite[Corollary 8]{OinSil4}.

\begin{theorem}\label{newzentersnittcrystalline}
If $\aalg= \gcrystalline$ is a crystalline graded ring, then
\begin{displaymath}
	I\cap C_{\aalg}(Z(\aalg_0)) \neq\{0\}
\end{displaymath}
for each nonzero ideal $I$ in $\gcrystalline$.
\end{theorem}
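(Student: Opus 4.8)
The plan is to mimic, almost verbatim, the contra-positive argument used in the proof of Theorem~\ref{newzentersnitt}, replacing the role of the partition of unity \eqref{partitionofunity} for a strongly graded ring by the homogeneous components $\aalg_0 u_g$ of a crystalline graded ring. So I would start by fixing an ideal $I$ of $\aalg = \gcrystalline$ with $I \cap C_{\aalg}(Z(\aalg_0)) = \{0\}$ and aim to prove $I = \{0\}$. Writing a general element $x = \sum_{g \in G} r_g u_g \in I$ with only finitely many $r_g \in \aalg_0$ nonzero, define $\supp(x) = \{ g \in G \mid r_g \neq 0 \}$ and choose $0 \neq x \in I \setminus C_{\aalg}(Z(\aalg_0))$ with $N = \#\supp(x)$ minimal; if no such $x$ exists then $I \subseteq C_{\aalg}(Z(\aalg_0))$, hence $I = \{0\}$ and we are done. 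Assume $N \geq 1$ and seek a contradiction.

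The first substantive step is to produce, from $x$, a new element $x' \in I$ whose support contains $e$, is nonzero, and still lies outside $C_{\aalg}(Z(\aalg_0))$ (or else has support of size $1$, which will be handled separately). Pick $t \in \supp(x)$. In the strongly graded case one multiplies by a suitable $b_{t^{-1}}^{(i)}$; here the analogue is to left-multiply by $u_{t^{-1}}$. Using Lemma~\ref{sigmaalphamaps}(i)--(ii), $u_{t^{-1}}(r_g u_g) = \sigma_{t^{-1}}(r_g) u_{t^{-1}} u_g = \sigma_{t^{-1}}(r_g)\,\alpha(t^{-1},g)\, u_{t^{-1}g}$, so the $g = t$ coefficient of $u_{t^{-1}} x$ is $\sigma_{t^{-1}}(r_t)\,\alpha(t^{-1},t)$, which sits in degree $e$. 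The key point to verify is that this coefficient is nonzero: $\sigma_{t^{-1}}$ is a ring automorphism of $\aalg_0$ by Lemma~\ref{torsionfree}(vi) (crystalline, hence $S(G)$-torsion free), so $\sigma_{t^{-1}}(r_t) \neq 0$; then $\sigma_{t^{-1}}(r_t)\,\alpha(t^{-1},t) \neq 0$ because $\aalg_0$ is $S(G)$-torsion free, invoking Lemma~\ref{torsionfree}(iii) with $g = t^{-1}$ after writing $\alpha(t^{-1}, (t^{-1})^{-1}) = \alpha(t^{-1},t)$. Here a small care is needed about sidedness: Lemma~\ref{torsionfree}(iii) is phrased as $\alpha(g,g^{-1})a_0 = 0 \Rightarrow a_0 = 0$, whereas we have a product $\sigma_{t^{-1}}(r_t)\alpha(t^{-1},t)$ with the $\alpha$ on the right; I would use that the $\alpha(s,t)$ are normalizing elements of $\aalg_0$ (stated in the text after Lemma~\ref{sigmaalphamaps}) to move the $\alpha$ to the correct side, or equivalently appeal to the symmetric torsion-freeness available in a crystalline graded ring. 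Thus $x' := u_{t^{-1}} x \in I$ is nonzero with $e \in \supp(x')$, and $\supp(x')$ has size at most $N$.

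The last step reproduces the commutator trick. If $x' \in C_{\aalg}(Z(\aalg_0))$ we get a contradiction directly (an element of $I \cap C_{\aalg}(Z(\aalg_0))$ that is nonzero). Otherwise, for an arbitrary $a \in Z(\aalg_0)$ form $x'' := a x' - x' a \in I$. Because $a$ has degree $e$ and $Z(\aalg_0)$ is fixed by the canonical action — or more precisely because $a \in Z(\aalg_0)$ commutes with the degree-$e$ coefficient $r'_e$ of $x'$ — the coefficient of $u_e$ in $x''$ vanishes, so $e \notin \supp(x'')$ and $\#\supp(x'') < N$ (in particular $< \#\supp(x')$). By minimality of $N$ this forces $x'' = 0$. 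Since $a \in Z(\aalg_0)$ was arbitrary, $x' \in C_{\aalg}(Z(\aalg_0))$, the desired contradiction. Hence $N = 0$, so $x = 0$; as $x \in I$ was arbitrary, $I = \{0\}$, and by contra-positivity every nonzero ideal meets $C_{\aalg}(Z(\aalg_0))$ nontrivially.

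The main obstacle, and the only place where crystallinity rather than the weaker pre-crystalline hypothesis is genuinely used, is the nonvanishing claim $u_{t^{-1}} x \neq 0$ with $e$ in its support: one must know that $\sigma_{t^{-1}}$ is injective and that left-multiplication by $\alpha(t^{-1},t)$ kills nothing, both of which are exactly the torsion-free conditions in Lemma~\ref{torsionfree}. Everything else — the commutator degree-lowering and the extraction of the degree-$e$ component — is formally identical to the strongly graded proof, with $\aalg_0 u_g$ playing the role of $\rring_g$; the normalizing property of the $\alpha(s,t)$ is what guarantees that the various coefficient manipulations stay inside $\aalg_0$ on the appropriate side.
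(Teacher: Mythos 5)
Your proof is correct and is precisely the argument the paper has in mind: the paper omits the proof of Theorem~\ref{newzentersnittcrystalline} on the grounds that it is analogous to that of Theorem~\ref{newzentersnitt}, and you have carried out exactly that analogy, correctly isolating the one genuinely new point (nonvanishing of the degree-$e$ coefficient $\sigma_{t^{-1}}(r_t)\,\alpha(t^{-1},t)$ of $u_{t^{-1}}x$) and the torsion-freeness it requires. The right-sided torsion-freeness you invoke does hold in a crystalline graded ring --- for instance, for $a_0\neq 0$ write $a_0\,\alpha(t^{-1},t)=a_0\,u_{t^{-1}}u_t=u_{t^{-1}}u_t\,\sigma_t^{-1}\bigl(\sigma_{t^{-1}}^{-1}(a_0)\bigr)=\alpha(t^{-1},t)\,\sigma_t^{-1}\bigl(\sigma_{t^{-1}}^{-1}(a_0)\bigr)$ and apply Lemma~\ref{torsionfree}(iv) and (vi) --- so the step you flag closes exactly as you indicate.
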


The following theorem is a generalization of Theorem \ref{skewrings} (\cite[Theorem 3]{OinSil4}) and the proof makes use of the same idea as in \cite{OinSil4}. However, in this proof we make a crucial observation and are able to make use of an important map.

\begin{theorem}\label{skewringsnew}
Let $\mathcal{R}=\gskewring$ be a skew group ring with $\mathcal{R}_e$ commutative. The following two assertions are equivalent:
\begin{enumerate}
	\item[(i)] The ring $\mathcal{R}_e$ is a maximal commutative subring in $\mathcal{R}$.
	\item[(ii)] $I \cap \mathcal{R}_e \neq \{0\}$ for each nonzero ideal $I$ in $\mathcal{R}$.
\end{enumerate}
\end{theorem}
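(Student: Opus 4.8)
The plan is to prove the two implications separately, with the substantive content lying in (i) $\Rightarrow$ (ii); the reverse implication (ii) $\Rightarrow$ (i) should be essentially formal. For (ii) $\Rightarrow$ (i): suppose $\mathcal{R}_e$ is \emph{not} maximal commutative in $\mathcal{R}=\gskewring$. Then there is an element $x = \sum_{g} r_g u_g$ commuting with every element of $\mathcal{R}_e$ but lying outside $\mathcal{R}_e$, so $r_g \neq 0$ for some $g \neq e$. The commutation relation $a x = x a$ for all $a \in \mathcal{R}_e$ forces, component by component, $a r_g = r_g \sigma_g(a)$ for every $g$ in the support, i.e. $r_g(a - \sigma_g(a)) = 0$ since $\mathcal{R}_e$ is commutative. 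The idea is to manufacture from such an $x$ a nonzero ideal with zero intersection with $\mathcal{R}_e$; concretely, consider the element $y = x - r_e \cdot 1$ (stripping off the neutral component), which still lies in $C_{\mathcal{R}}(\mathcal{R}_e)$, is nonzero, and has empty intersection of its support with $\{e\}$. Then the two-sided ideal generated by $y$ inside $C_{\mathcal{R}}(\mathcal{R}_e)$ — or more carefully the appropriate $\mathcal{R}$-ideal constructed from $y$ — should be shown to avoid $\mathcal{R}_e$, contradicting (ii). I would model this argument on the minimality-of-support technique already used in the proof of Theorem \ref{newzentersnitt}.

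For (i) $\Rightarrow$ (ii), assume $\mathcal{R}_e$ is maximal commutative in $\mathcal{R}$, equivalently $C_{\mathcal{R}}(\mathcal{R}_e) = \mathcal{R}_e$. Let $I$ be a nonzero ideal of $\mathcal{R}$. Since a skew group ring is in particular a strongly (indeed crystalline / $G$-crossed) graded ring with $\mathcal{R}_e$ commutative, Theorem \ref{stronglycommutantsnitt} applies and gives $I \cap C_{\mathcal{R}}(\mathcal{R}_e) \neq \{0\}$. By maximal commutativity $C_{\mathcal{R}}(\mathcal{R}_e) = \mathcal{R}_e$, so $I \cap \mathcal{R}_e \neq \{0\}$, which is exactly (ii). This direction is therefore immediate from the results already in hand and requires no new work — the observation that "the crucial map" referred to in the theorem's preamble is simply the canonical action $\sigma$ underlying Theorem \ref{stronglycommutantsnitt}.

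The \textbf{main obstacle} is making (ii) $\Rightarrow$ (i) rigorous: one must pass from "$\mathcal{R}_e$ is not maximal commutative" to an actual nonzero ideal meeting $\mathcal{R}_e$ trivially, and the naive candidate (the two-sided ideal generated by $y = x - r_e$) need not a priori have zero intersection with $\mathcal{R}_e$ — multiplying $y$ on both sides by homogeneous elements can produce nonzero neutral-degree terms. The fix is to work with the ideal of $C_{\mathcal{R}}(\mathcal{R}_e)$ generated by $y$, exploiting that the canonical action $\sigma$ makes $C_{\mathcal{R}}(\mathcal{R}_e)$ into a $G$-graded ring on which $G$ acts, so that "stripping the $e$-component" is a well-behaved operation; then one checks that elements of $C_{\mathcal{R}}(\mathcal{R}_e)$ with empty $e$-support, together with the relations $r_g(a-\sigma_g(a))=0$, cannot collapse into $\mathcal{R}_e$ without forcing $y=0$. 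Here the commutativity of $\mathcal{R}_e$ is what guarantees $\mathcal{R}_e = Z(\mathcal{R}_e) \subseteq C_{\mathcal{R}}(\mathcal{R}_e)$ and lets the support-minimization argument run cleanly. I would also note that, as in \cite{OinSil4}, once the key map (the canonical action) is available, the group $G$ need \emph{not} be assumed abelian or torsion-free, which is precisely the improvement over Theorem \ref{skewrings}.
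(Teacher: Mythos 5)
Your direction (i) $\Rightarrow$ (ii) is correct and is exactly what the paper does: maximal commutativity gives $C_{\rring}(\rring_e)=\rring_e$, and Theorem \ref{stronglycommutantsnitt} (or its generalization Theorem \ref{newzentersnitt}) then yields $I\cap\rring_e\neq\{0\}$ for every nonzero ideal $I$. No issue there.

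The direction (ii) $\Rightarrow$ (i) has a genuine gap, and you have in fact named it yourself without closing it. Starting from $x\in C_{\rring}(\rring_e)\setminus\rring_e$ and stripping the neutral component to get $y=x-r_e$, the two-sided ideal of $\rring$ generated by $y$ has no reason to miss $\rring_e$: products $a_g u_g\, y\, a_h u_h$ produce neutral-degree terms whenever $g t h=e$ for some $t\in\supp(y)$, and nothing in your setup kills them. Your proposed repair --- passing to the ideal of $C_{\rring}(\rring_e)$ generated by $y$ --- does not help, because assertion (ii) quantifies over ideals of $\rring$, not of the commutant; an ideal of $C_{\rring}(\rring_e)$ meeting $\rring_e$ trivially contradicts nothing. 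So as written the argument does not produce the required nonzero ideal of $\rring$ with $I\cap\rring_e=\{0\}$.

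The missing idea in the paper is to use a \emph{single} homogeneous coefficient rather than the whole element $y$. From $x\in C_{\rring}(\rring_e)\setminus\rring_e$ one extracts $s\neq e$ and $r_s\in\rring_e\setminus\{0\}$ with $r_s\,\sigma_s(a)=r_s\,a$ for all $a\in\rring_e$, and then takes $I$ to be the two-sided ideal generated by $r_s-r_s u_s=r_s(1_{\rring}-u_s)$. The point of this choice is that every generator $a_g u_g\,(r_s-r_s u_s)\,a_h u_h$ collapses, using precisely the relation $r_s\sigma_s(a_h)=r_s a_h$ and commutativity of $\rring_e$, to the form $b\,u_{gh}-b\,u_{gsh}$ with the \emph{same} coefficient $b$ in both terms. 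Consequently the additive map $\epsilon:\gskewring\to\rring_e$, $\sum_g a_g u_g\mapsto\sum_g a_g$, vanishes identically on $I$ while being injective on $\rring_e$, which forces $I\cap\rring_e=\{0\}$. This map $\epsilon$ --- an augmentation-type map, not the canonical action $\sigma$ --- is the ``important map'' alluded to in the preamble; your $y$ is not of the form $b(1_{\rring}-u_s)$, so $\epsilon$ does not annihilate the ideal it generates and the support-minimization technique of Theorem \ref{newzentersnitt} does not substitute for it. Your closing remark that $G$ need not be abelian or torsion-free is correct and is indeed the improvement over Theorem \ref{skewrings}, but it is earned only once the $\epsilon$-argument is in place.
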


\begin{proof}
By Theorem \ref{newzentersnitt} (i) implies (ii) for the (strongly graded) skew group ring $\mathcal{R}$. We shall now show that (ii) implies (i). Suppose that $\mathcal{R}_e$ is not maximal commutative in $\mathcal{R}$. If we can show that there exists a nonzero ideal $I$ in $\mathcal{R}$, such that $I \cap \mathcal{R}_e =\{0\}$, then by contra positivity we are done. By the assumption there exists some $s\in G\setminus \{e\}$ and $r_s \in \mathcal{R}_e \setminus \{0\}$ such that $r_s \, \sigma_s(a) = r_s \, a$ for each $a\in \mathcal{R}_e$. Let us choose such an $r_s$ and let I be the twosided ideal in $\mathcal{R}$ generated by $r_s - r_s \, u_s$. The ideal $I$ is obviously nonzero, and furthermore it is spanned by elements of the form $a_g \, u_g \, (r_s - r_s \, u_s) \, a_h \, u_h$
where $g,h \in G$ and $a_g, a_h \in \mathcal{R}_e$. By commutativity of $\mathcal{R}_e$ and the properties of $r_s$ we may rewrite this expression.
\begin{eqnarray}
a_g \, u_g \, (r_s - r_s \, u_s) \, a_h \, u_h &=& a_g \, u_g \, (r_s \, a_h - \underbrace{r_s \, \sigma_s(a_h)}_{=r_s \, a_h} \, u_s) \, u_h \nonumber \\
&=& a_g \, u_g \, r_s \, a_h (1_{\mathcal{R}} - u_s) \, u_h \nonumber \\
&=& a_g \, \sigma_g(r_s \, a_h) \, u_g (1_{\mathcal{R}} - u_s) \, u_h \nonumber \\
&=& \underbrace{a_g \, \sigma_g(r_s \, a_h)}_{:=b} \, \, ( u_{gh} - u_{gsh}) \nonumber \\
&=& b \, u_{gh} - b \, u_{gsh} \label{element2}
\end{eqnarray}
It is now clear that each element of $I$ is a sum of elements of the form \eqref{element2}, where $b\in \mathcal{R}_e$ and $g,h\in G$. Define a map
\begin{displaymath}
	\epsilon : \gskewring \to \mathcal{R}_e, \quad \sum_{g\in G} a_g u_g \mapsto \sum_{g\in G} a_g .
\end{displaymath}
It is clear that $\epsilon$ is additive and one easily sees that $\epsilon$ is identically zero on $I$. Furthermore, $\epsilon\lvert_{\mathcal{R}_e}$, i.e. the restriction of $\epsilon$ to $\mathcal{R}_e$, is injective. Take an arbitrary $m\in I \cap \mathcal{R}_e$. Clearly $\epsilon(m)=0$ since $m\in I$ and by the injectivity of $\epsilon\lvert_{\mathcal{R}_e}$ we conclude that $m=0$. Hence $I \cap \mathcal{R}_e = \{0\}$. This concludes the proof.
\end{proof}

\begin{remark}
It is not difficult to see that the map $\epsilon$ is multiplicative if and only if the action $\sigma$ is trivial, i.e. $\gskewring$ is a group ring. In that situation the map $\epsilon$ is commonly referred to as the \emph{augmentation map}. However, note that the preceding proof does not require $\epsilon$ to be multiplicative.
\end{remark}

\section{The map $\psi : G \to \Pic(\rring_e)$ and simple strongly graded rings}\label{Simplicity}

We begin by recalling a useful lemma.

\begin{lemma}[\cite{OinSilTheVav}]\label{annihilator}
Let $\mathcal{R} = \bigoplus_{g\in G} \mathcal{R}_g$ be a strongly $G$-graded ring. If $a\in \mathcal{R}$ is such that
\begin{displaymath}
	a \, \mathcal{R}_g = \{0\} \quad \text{or} \quad \mathcal{R}_g \, a = \{0\}
\end{displaymath}
for some $g\in G$, then $a=0$.
\end{lemma}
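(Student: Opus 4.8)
The plan is to exploit the defining property of strong grading, namely $\mathcal{R}_g \mathcal{R}_{g^{-1}} = \mathcal{R}_e$, together with the fact that $1_{\mathcal{R}} \in \mathcal{R}_e$ (see \cite[Proposition 1.1.1]{MoGR}). The idea is simply to ``absorb'' the identity by passing from $\mathcal{R}_g$ to $\mathcal{R}_e$ via multiplication by $\mathcal{R}_{g^{-1}}$ on the appropriate side.

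Concretely, suppose first that $a \mathcal{R}_g = \{0\}$ for some $g \in G$. Then for every $r_g \in \mathcal{R}_g$ and every $r_{g^{-1}} \in \mathcal{R}_{g^{-1}}$ we have $a (r_g r_{g^{-1}}) = (a r_g) r_{g^{-1}} = 0$. Since $\mathcal{R}_e = \mathcal{R}_g \mathcal{R}_{g^{-1}}$ is, by definition of the module product, the set of all finite sums of such ring products $r_g r_{g^{-1}}$, additivity of left multiplication by $a$ yields $a \mathcal{R}_e = \{0\}$. In particular, since $1_{\mathcal{R}} \in \mathcal{R}_e$, we obtain $a = a \cdot 1_{\mathcal{R}} = 0$. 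The case $\mathcal{R}_g a = \{0\}$ is handled symmetrically: it gives $\mathcal{R}_{g^{-1}} \mathcal{R}_g \, a = \{0\}$, hence $\mathcal{R}_e \, a = \{0\}$, and thus $a = 1_{\mathcal{R}} \cdot a = 0$.

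Alternatively, one could phrase the same argument using an explicit partition of unity as in \eqref{partitionofunity}: write $1_{\mathcal{R}} = \sum_{i=1}^{n_g} a_g^{(i)} b_{g^{-1}}^{(i)}$ with $a_g^{(i)} \in \mathcal{R}_g$ and $b_{g^{-1}}^{(i)} \in \mathcal{R}_{g^{-1}}$; then $a = a \cdot 1_{\mathcal{R}} = \sum_{i=1}^{n_g} (a \, a_g^{(i)}) \, b_{g^{-1}}^{(i)} = 0$ whenever $a \mathcal{R}_g = \{0\}$, and similarly on the other side. There is no real obstacle here — the only point requiring a little care is remembering that $\mathcal{R}_g \mathcal{R}_{g^{-1}}$ denotes the set of finite sums of products (so that additivity of multiplication by $a$ is what lets us conclude $a \mathcal{R}_e = \{0\}$ rather than merely killing individual products), which is exactly why the identity can be reached.
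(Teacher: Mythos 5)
Your proof is correct and is the standard argument for this fact: the paper itself does not reproduce a proof but simply cites \cite{OinSilTheVav}, and the argument there is exactly this use of $\mathcal{R}_g\mathcal{R}_{g^{-1}}=\mathcal{R}_e$ together with $1_{\mathcal{R}}\in\mathcal{R}_e$ (equivalently, the partition of unity \eqref{partitionofunity}). Your remark about $\mathcal{R}_g\mathcal{R}_{g^{-1}}$ denoting finite sums of products is the right point of care, and both sides are handled correctly.
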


If we assume that $\mathcal{R}_e$ is maximal commutative in the strongly $G$-graded ring $\mathcal{R}$, then we can say the following about the canonical map $\psi : G \to \Pic(\mathcal{R}_e)$.

\begin{proposition}\label{picardinjective}
Let $\mathcal{R} = \bigoplus_{g\in G} \mathcal{R}_g$ be a strongly $G$-graded ring. If $\mathcal{R}_e$ is maximal commutative in $\mathcal{R}$, then the map $\psi : G \to \Pic(\mathcal{R}_e), g \mapsto [\mathcal{R}_g]$, is injective.
\end{proposition}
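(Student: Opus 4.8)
The plan is to exploit that $\psi : G \to \Pic(\mathcal{R}_e)$ is a group homomorphism — this is exactly the content recalled in Section \ref{Preliminaries}, coming from the $\mathcal{R}_e$-bimodule isomorphisms $\mathcal{R}_g \otimes_{\mathcal{R}_e} \mathcal{R}_h \cong \mathcal{R}_{gh}$ that hold in a strongly graded ring — so that it suffices to prove $\ker \psi = \{e\}$. I would therefore fix $g \in \ker \psi$; then $[\mathcal{R}_g] = [\mathcal{R}_e]$ in $\Pic(\mathcal{R}_e)$, and since $[\mathcal{R}_e]$ is the identity element of $\Pic(\mathcal{R}_e)$, the very definition of the Picard group yields an $\mathcal{R}_e$-bimodule isomorphism $\varphi : \mathcal{R}_e \to \mathcal{R}_g$. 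The goal is then to force $g = e$.

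The key step is to evaluate $\varphi$ at the identity. Set $r_g := \varphi(1_{\mathcal{R}_e}) \in \mathcal{R}_g$. For every $a \in \mathcal{R}_e$, using that $\varphi$ respects the left action gives $a \, r_g = a \cdot \varphi(1_{\mathcal{R}_e}) = \varphi(a)$, and using that it respects the right action gives $r_g \, a = \varphi(1_{\mathcal{R}_e}) \cdot a = \varphi(a)$; hence $a \, r_g = r_g \, a$ for all $a \in \mathcal{R}_e$, that is, $r_g \in C_{\mathcal{R}}(\mathcal{R}_e)$. Now maximal commutativity of $\mathcal{R}_e$ enters: $C_{\mathcal{R}}(\mathcal{R}_e) = \mathcal{R}_e$, so $r_g \in \mathcal{R}_e \cap \mathcal{R}_g$. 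Since the grading sum is direct, $\mathcal{R}_e \cap \mathcal{R}_g = \{0\}$ as soon as $g \neq e$; thus $g \neq e$ would give $r_g = \varphi(1_{\mathcal{R}_e}) = 0$, contradicting injectivity of $\varphi$ (equivalently, surjectivity of $\varphi$ then yields $\mathcal{R}_g = \mathcal{R}_e \, r_g = \{0\}$, contradicting $\mathcal{R}_g \mathcal{R}_{g^{-1}} = \mathcal{R}_e$, or Lemma \ref{annihilator}). Hence $g = e$, so $\ker \psi = \{e\}$ and $\psi$ is injective.

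I do not expect a genuine obstacle here: the whole argument hinges on the single idea of testing a candidate trivialising bimodule isomorphism on $1_{\mathcal{R}_e}$ and observing that its image must centralise $\mathcal{R}_e$, which maximal commutativity then pins down to $\mathcal{R}_e$ itself. The only points needing care are bookkeeping ones — citing correctly from Section \ref{Preliminaries} that, thanks to strong gradation, each $\mathcal{R}_g$ genuinely represents a class in $\Pic(\mathcal{R}_e)$ and that $\psi$ is a homomorphism, so that the hypothesis $g \in \ker \psi$ legitimately produces the isomorphism $\varphi$ — and, if one wishes to be pedantic, noting that the ring is nonzero so that $1_{\mathcal{R}_e} \neq 0$.
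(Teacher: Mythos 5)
Your argument is correct, but it follows a genuinely different route from the paper's. The paper argues by contrapositive on two distinct elements $g,h$ with $[\mathcal{R}_g]=[\mathcal{R}_h]$: it pushes the bimodule isomorphism $f:\mathcal{R}_g\to\mathcal{R}_h$ through the canonical action $\sigma$ of Lemma \ref{karpilovskysats}, uses surjectivity of $f$ together with Lemma \ref{annihilator} to conclude $\sigma_g=\sigma_h$, hence $\sigma_{g^{-1}h}=\identity_{\mathcal{R}_e}$, and reads off that the whole component $\mathcal{R}_{g^{-1}h}$ centralises $\mathcal{R}_e$ — contradicting maximal commutativity. You instead use the homomorphism property of $\psi$ to reduce to the kernel, and then simply evaluate the trivialising isomorphism $\varphi:\mathcal{R}_e\to\mathcal{R}_g$ at $1_{\mathcal{R}}$: the element $r_g=\varphi(1_{\mathcal{R}})$ centralises $\mathcal{R}_e$, so maximal commutativity puts it in $\mathcal{R}_e\cap\mathcal{R}_g=\{0\}$ for $g\neq e$, killing $\varphi$. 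This is shorter and avoids both the canonical action and Lemma \ref{annihilator} in the main line (you only need the directness of the grading and the nontriviality of $\mathcal{R}_g$, which strong gradation supplies); it is also structurally the mirror image of the paper's proof of the converse in Proposition \ref{injectivemaxcommequiv}, where a centralising $r_g\in\mathcal{R}_g$ is shown to give $\mathcal{R}_g=\mathcal{R}_e\,r_g\cong\mathcal{R}_e$. What the paper's longer detour buys is the intermediate fact, of independent interest, that isomorphic homogeneous components force equal canonical automorphisms $\sigma_g=\sigma_h$, tying $\psi$ to the Miyashita action. Both proofs (yours and the paper's) tacitly assume $\mathcal{R}\neq\{0\}$, as you note; that is harmless.
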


\begin{proof}
Let $\rring_e$ be maximal commutative in $\rring$. Suppose that $\psi : G \to \Pic(\mathcal{R}_e)$ is not injective. This means that we can pick two distinct elements $g,h \in G$ such that $\mathcal{R}_g \cong \mathcal{R}_h$ as $\mathcal{R}_e$-bimodules. Let $f : \mathcal{R}_g \to \mathcal{R}_h$ be a bijective $\mathcal{R}_e$-bimodule homomorphism. By our assumptions $\mathcal{R}_e =C_{\mathcal{R}}(\mathcal{R}_e)$ and hence we can use the map $\sigma : G\to \Aut(\mathcal{R}_e)$ defined by \eqref{defofsigma} to write
\begin{eqnarray}\label{rewrite}
\sigma_h(b) \, \underbrace{f(r_g)}_{\in \mathcal{R}_h} = f(r_g) \,  b = f(r_g \, b) = f(\sigma_g(b) \, r_g)
= \sigma_g(b) \, f(r_g)
\end{eqnarray}
for any $b\in \mathcal{R}_e$ and $r_g \in \mathcal{R}_g$. (It is important to note that $\sigma_g(b) \in \mathcal{R}_e$ since $b\in \mathcal{R}_e$.)
The map $f$ is bijective and in particular surjective. Hence, by \eqref{rewrite} we conclude that $(\sigma_h(b) - \sigma_g(b)) \, \mathcal{R}_h = \{0\}$ for any $b\in \mathcal{R}_e$. It follows from Lemma \ref{annihilator} that $\sigma_h(b) - \sigma_g(b)=0$ for any $b \in \mathcal{R}_e$. Hence $\sigma_g = \sigma_h$ in $\Aut(C_{\mathcal{R}}(\mathcal{R}_e)) = \Aut(\mathcal{R}_e)$ and we may write
\begin{displaymath}
\sigma_g = \sigma_h \quad \Longleftrightarrow \quad
	\sigma_{g^{-1}} \, \sigma_g = \sigma_{g^{-1}} \, \sigma_h \quad \Longleftrightarrow \quad \sigma_e = \sigma_{g^{-1}h} \quad \Longleftrightarrow \quad \identity_{\rring_e} = \sigma_{g^{-1}h}.
\end{displaymath}
Note that $g^{-1}h \neq e$ since $g\neq h$. This shows that the homogeneous component $\mathcal{R}_{g^{-1}h}$ ($\neq \mathcal{R}_e$) commutes with $\mathcal{R}_e$, and hence $\mathcal{R}_e$ is not maximal commutative. We have reached a contradiction and this shows that $\psi : G \to \Pic(\rring_e)$ is injective.
\end{proof}


The following remark shows that the rings considered in Proposition \ref{maintheorem} are in fact $G$-crossed products.

\begin{remark}
Recall that a commutative and simple ring is a field. If $\mathcal{R} = \bigoplus_{g\in G} \mathcal{R}_g$ is a strongly $G$-graded ring and $\rring_e$ is a field, then $\rring$ is a $G$-crossed product. Indeed, for each $g\in G$, we have $\mathcal{R}_g \, \mathcal{R}_{g^{-1}} = \mathcal{R}_e$. Hence, by Lemma \ref{annihilator} we may choose some $a\in \rring_g \setminus \{0\}$ and $b\in \mathcal{R}_{g^{-1}} \setminus \{0\}$ such that $ab=c \neq 0$ in $\rring_e$. This means that $c$ is invertible in $\rring_e$ and hence $a$ is invertible in $\rring$, with (right) inverse $b c^{-1}$.
\end{remark}

%

The following proposition is a direct consequence of Theorem \ref{skewrings} and we shall therefore omit the proof.

\begin{proposition}\label{skewresult}
Let $\mathcal{R}=\gskewring$ be a skew group ring, where $\mathcal{R}_e$ is a field and $G$ is an abelian group. The following assertions are equivalent:
\begin{enumerate}
	\item[(i)] The subring $\mathcal{R}_e$ is maximal commutative in $\mathcal{R}$.
	\item[(ii)] $\mathcal{R}$ is a simple ring.
\end{enumerate}
\end{proposition}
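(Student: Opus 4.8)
The plan is to derive Proposition~\ref{skewresult} as a short corollary of Theorem~\ref{skewrings}, so the first task is to check that the hypotheses of that theorem are met. Since $\mathcal{R}_e$ is assumed to be a field, it is in particular an integral domain, and $G$ is assumed abelian; hence the first of the two bulleted conditions in Theorem~\ref{skewrings} holds, and we have available the equivalence between ``$\mathcal{R}_e$ is maximal commutative in $\mathcal{R}$'' and ``$I \cap \mathcal{R}_e \neq \{0\}$ for every nonzero ideal $I$ of $\mathcal{R}$.'' It therefore suffices to show that, under these hypotheses, the latter ideal-intersection property is equivalent to simplicity of $\mathcal{R}$.

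Next I would prove the two implications between (ii) of Proposition~\ref{skewresult} (simplicity) and the ideal-intersection property. The direction ``$\mathcal{R}$ simple $\Rightarrow$ $I \cap \mathcal{R}_e \neq \{0\}$ for all nonzero $I$'' is immediate: if $\mathcal{R}$ is simple then its only nonzero ideal is $\mathcal{R}$ itself, and $\mathcal{R} \cap \mathcal{R}_e = \mathcal{R}_e \ni 1_{\mathcal{R}} \neq 0$. For the converse, suppose every nonzero ideal meets $\mathcal{R}_e$ nontrivially, and let $I$ be a nonzero ideal of $\mathcal{R}$. Then $I \cap \mathcal{R}_e$ is a nonzero ideal of $\mathcal{R}_e$: it is clearly an additive subgroup, it is closed under multiplication by $\mathcal{R}_e$ because $I$ is an ideal of $\mathcal{R} \supseteq \mathcal{R}_e$, and it is nonzero by hypothesis. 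But $\mathcal{R}_e$ is a field, so its only nonzero ideal is $\mathcal{R}_e$ itself; hence $1_{\mathcal{R}} \in I \cap \mathcal{R}_e \subseteq I$, which forces $I = \mathcal{R}$. Thus $\mathcal{R}$ is simple.

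Combining these observations completes the argument: by Theorem~\ref{skewrings}, (i) of Proposition~\ref{skewresult} is equivalent to ``$I \cap \mathcal{R}_e \neq \{0\}$ for every nonzero ideal $I$,'' and by the two implications just sketched, that property is in turn equivalent to (ii). I do not expect any serious obstacle here; the only point that needs a moment's care is verifying that $I \cap \mathcal{R}_e$ is genuinely an ideal of $\mathcal{R}_e$ (as opposed to merely a subgroup), which follows because $\mathcal{R}_e$ is a subring of $\mathcal{R}$ and $I$ absorbs multiplication by all of $\mathcal{R}$. The use of ``$\mathcal{R}_e$ is a field'' is essential in two places: it lets us invoke the integral-domain branch of Theorem~\ref{skewrings}, and it lets us conclude that the nonzero ideal $I \cap \mathcal{R}_e$ is all of $\mathcal{R}_e$.
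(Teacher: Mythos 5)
Your proof is correct and follows exactly the route the paper intends: the paper omits the proof, stating only that the proposition is a direct consequence of Theorem~\ref{skewrings}, and your argument supplies precisely the missing bridge, namely that when $\mathcal{R}_e$ is a field the ideal-intersection property is equivalent to simplicity because a nonzero $I\cap\mathcal{R}_e$ is an ideal of the field $\mathcal{R}_e$ and hence contains $1_{\mathcal{R}}$. Nothing further is needed.
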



\begin{example}\label{LaurentGroupRing}
Consider the group ring $\mathcal{R} = \complex[\Z]$, which corresponds to the special case of a skew group ring with trivial action. The so called \emph{augmentation ideal}, which is the kernel, $\ker(\epsilon)$, of the augmentation map
\begin{displaymath}
	\epsilon : \C[\Z] \to \C, \quad \sum_{k \in \Z} c_k \, \overline{k} \mapsto \sum_{k \in \Z} c_k
\end{displaymath}
is a nontrivial ideal in $\mathcal{R}$ and hence $\mathcal{R}=\complex[\Z]$ is not a simple ring. This conclusion also follows directly from Proposition \ref{skewresult}. Indeed, $\mathcal{R} = \complex[\Z]$ is commutative and hence $\mathcal{R}_0 = \C$ is not maximal commutative in $\complex[\Z]$.
\end{example}

\subsection{Maximal commutativity of $\rring_e$ and injectivity of $\psi : G \to \Pic(\mathcal{R}_e)$}\label{maxcommpicard}

The following proposition shows that in the case when $\mathcal{R}_e$ is assumed to be commutative, Theorem \ref{stronglysimple} is equivalent to Proposition \ref{maintheorem}.

\begin{proposition}\label{injectivemaxcommequiv}
Let $\mathcal{R} = \bigoplus_{g\in G} \mathcal{R}_g$ be a strongly $G$-graded ring. If $\rring_e$ is a field, then the following two assertions are equivalent:
\begin{itemize}
	\item[(i)] $\rring_e$ is maximal commutative in $\rring$.
	\item[(ii)] The map $\psi : G \to \Pic(\rring_e)$ is injective.
\end{itemize}
\end{proposition}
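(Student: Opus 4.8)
The plan is to prove the two implications separately, using Proposition \ref{picardinjective} for one direction and a contrapositive argument for the other. Throughout, recall that since $\rring_e$ is a field, $\rring$ is a $G$-crossed product (by the remark preceding the statement), so for each $g \in G$ we may fix a unit $u_g \in U(\rring) \cap \rring_g$, and $\rring_g = \rring_e u_g = u_g \rring_e$. Moreover $Z(\rring_e) = \rring_e$, so the canonical action $\sigma : G \to \Aut(\rring_e)$ of Lemma \ref{karpilovskysats} is defined on all of $\rring_e$, and in the crossed product picture it is simply conjugation: $\sigma_g(a) = u_g \, a \, u_g^{-1}$ for $a \in \rring_e$.

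The implication (i) $\Rightarrow$ (ii) is immediate from Proposition \ref{picardinjective}, which requires no field hypothesis at all: if $\rring_e$ is maximal commutative in $\rring$ then $\psi : G \to \Pic(\rring_e)$, $g \mapsto [\rring_g]$, is injective. So the content is in (ii) $\Rightarrow$ (i), and I would prove the contrapositive. Suppose $\rring_e$ is not maximal commutative in $\rring$. Since $\rring_e = C_{\rring}(\rring_e)$ would contradict this, there is an element of $C_{\rring}(\rring_e) \setminus \rring_e$; I want to extract from this a nontrivial element $t \in G \setminus \{e\}$ with $\sigma_t = \identity_{\rring_e}$, i.e. with $u_t$ central over $\rring_e$. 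Writing a witnessing element $x = \sum_g x_g u_g \in C_{\rring}(\rring_e)$, commuting with an arbitrary $a \in \rring_e$ and comparing homogeneous components of degree $g$ gives $x_g \, \sigma_g(a) = a \, x_g = \sigma_g(a)^{?}\dots$ — more precisely $a x_g u_g = x_g u_g a = x_g \sigma_g(a) u_g$, so $(a - x_g \sigma_g(a) x_g^{-1})$ kills things when $x_g \neq 0$; since $\rring_e$ is a field, $x_g \neq 0$ is invertible, and for $a$ ranging over $\rring_e$ this forces $\sigma_g(a) = x_g^{-1} a x_g = a$ because $\rring_e$ is commutative. Hence $\sigma_g = \identity_{\rring_e}$ for every $g$ in the support of $x$; choosing $g \neq e$ in that support (possible since $x \notin \rring_e$) yields the desired $t := g$ with $\sigma_t = \identity_{\rring_e}$ and $t \neq e$.

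It remains to conclude that $\psi$ is not injective, i.e. that $[\rring_t] = [\rring_e]$ in $\Pic(\rring_e)$. Since $\sigma_t = \identity_{\rring_e}$, conjugation by $u_t$ fixes $\rring_e$ pointwise, so the map $\rring_e \to \rring_t = \rring_e u_t$, $a \mapsto a u_t = u_t a$, is visibly a homomorphism of $\rring_e$-bimodules (left multiplication is clear; for the right action, $(a u_t) b = a u_t b = a \sigma_t(b) u_t = a b u_t$ matches $(ab)u_t$), and it is bijective since $u_t$ is a unit. Therefore $\rring_t \cong \rring_e$ as $\rring_e$-bimodules, so $\psi(t) = [\rring_t] = [\rring_e] = \psi(e)$ with $t \neq e$, and $\psi$ is not injective. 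This completes the contrapositive, hence (ii) $\Rightarrow$ (i), and the proposition follows.

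I do not expect a serious obstacle here: the field hypothesis does all the heavy lifting, guaranteeing the crossed-product structure (so the $u_g$ exist and $\sigma$ is conjugation) and ensuring that nonzero coefficients $x_g$ are invertible, which is exactly what lets me pass from "$x_g$ commutes up to $\sigma_g$" to "$\sigma_g$ is trivial". The only point requiring a little care is the bookkeeping in the homogeneous-component comparison — making sure the identity $a x_g u_g = x_g \sigma_g(a) u_g$ is read off correctly and that the conclusion $\sigma_g = \identity$ uses commutativity of $\rring_e$ — and the verification that the bimodule isomorphism $\rring_e \cong \rring_t$ respects \emph{both} module actions, which is where $\sigma_t = \identity$ is needed a second time. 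An alternative to the contrapositive: from $\sigma_t = \identity$ one could instead directly observe that $u_t$ commutes with $\rring_e$, so $\rring_e \subsetneq \rring_e + \rring_e u_t \subseteq C_{\rring}(\rring_e)$ exhibits $\rring_e$ as not maximal commutative — but this is just the logically equivalent repackaging of the same computation.
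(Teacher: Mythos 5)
Your proposal is correct and takes essentially the same route as the paper: (i)$\Rightarrow$(ii) is quoted from Proposition \ref{picardinjective}, and (ii)$\Rightarrow$(i) is proved contrapositively by producing an invertible homogeneous element of nontrivial degree that centralizes $\rring_e$ and therefore furnishes an $\rring_e$-bimodule isomorphism $\rring_t \cong \rring_e$. The only cosmetic difference lies in how invertibility of that element is obtained (the paper shows $r_g\rring_{g^{-1}}=\rring_e$ directly from simplicity of $\rring_e$, whereas you first invoke the crossed-product remark to get units $u_g$ in every component and then use invertibility of the field coefficient $x_g$); the two arguments are interchangeable.
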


\begin{proof}
It follows from Proposition \ref{picardinjective} that (i) implies (ii).

To prove that (ii) implies (i), let us assume that $\mathcal{R}_e$ is not maximal commutative in $\mathcal{R}$. We want to show that $\psi$ is not injective and hence get the desired conclusion by contra positivity.

By our assumptions, there exists some nonzero element $r_g \in \mathcal{R}_g$, for some $g\neq e$, such that $r_g \, a = a \, r_g$ for all $a\in \mathcal{R}_e$. Consider the set $J = r_g \, \mathcal{R}_{g^{-1}} \subseteq \mathcal{R}_e$. Since $r_g$ commutes with $\mathcal{R}_e$ and $\rring_{g^{-1}}$ is an $\rring_e$-bimodule, $J$ is an ideal of $\mathcal{R}_e$ and as $r_g \, \mathcal{R}_{g^{-1}} \neq \{0\}$ (this follows from Lemma \ref{annihilator} since $r_g\neq 0$), we obtain $r_g \, \mathcal{R}_{g^{-1}} = \mathcal{R}_e$ since $\mathcal{R}_e$ is simple. Consequently, we conclude that there exists an $s_{g^{-1}} \in \rring_{g^{-1}}$ such that $ r_g \, s_{g^{-1}} = 1_{\rring}$. In a symmetrical way we get $\rring_{g^{-1}} \, r_g = \rring_e$ which yields $w_{g^{-1}} \, r_g = 1_{\rring}$  for some $w_{g^{-1}} \in \rring_{g^{-1}}$. From this we get $w_{g^{-1}} r_g s_{g^{-1}} = w_{g^{-1}}$ and hence $s_{g^{-1}}=w_{g^{-1}}$ yielding $r_g \, s_{g^{-1}} = s_{g^{-1}} \, r_g = 1_{\rring}$.

From the gradation we immediately conclude that $\mathcal{R}_e \, r_g \subseteq \mathcal{R}_g$ and $\mathcal{R}_g \, s_{g^{-1}} \subseteq \mathcal{R}_e$. By the equality $s_{g^{-1}} r_{g} = 1_{\rring}$ we get $\mathcal{R}_g \subseteq \mathcal{R}_e \, r_g$ and hence $\mathcal{R}_g = \mathcal{R}_e \, r_g$. Note that $r_g$ is invertible and hence a basis for the $\rring_e$-bimodule $\rring_e \, r_g$. This shows that $\mathcal{R}_g$ and $\mathcal{R}_e$ belong to the same isomorphism class in $\Pic(\mathcal{R}_e)$, and hence the morphism $\psi : G \to \Pic(\mathcal{R}_e)$ is not injective. This concludes the proof.
\end{proof}

\begin{remark}
The previous proof uses the same techniques as the proof of \cite[Theorem 3.4]{VanOystaeyen}.
\end{remark}

\section{$G$-simple subrings in crystalline graded rings}\label{GSimpleInCrystallineGradedRings}

If $A$ is a ring and $\sigma : G \to \Aut(A)$ is a group action, then we say that an ideal $I$ in $A$ is \emph{$G$-invariant}
if $\sigma_g(I) \subseteq I$ for each $g\in G$. Note that it is equivalent to say that $\sigma_g(I)=I$ for each $g\in G$. If there are no nontrivial $G$-invariant ideals in $A$, then we say that $A$ is \emph{$G$-simple}.
(Not to be confused with the term \emph{graded simple}!)

\begin{proposition}\label{CrystallineSimpleImplyGSimple}
Let $\gcrystalline$ be a crystalline graded ring, where $\aalg_0$ is commutative. If $\gcrystalline$ is a simple ring, then $\aalg_0$ is a $G$-simple ring (with respect to the action defined in Lemma \ref{sigmaalphamaps}).
\end{proposition}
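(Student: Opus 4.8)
The plan is to argue by contraposition: I will show that if $\aalg_0$ possesses a nontrivial $G$-invariant ideal $I$, then $\aalg = \gcrystalline$ is not simple, by producing a nontrivial two-sided ideal of $\aalg$ from $I$. The natural candidate is the ideal
\begin{displaymath}
	J = \bigoplus_{g\in G} I \, u_g,
\end{displaymath}
i.e. the set of all finite sums $\sum_g r_g u_g$ with every coefficient $r_g$ lying in $I$. Since $I$ is a proper nonzero ideal of $\aalg_0$ and the decomposition (P1) is direct with each $\aalg_0 u_g$ free of rank one over $\aalg_0$, it is immediate that $J$ is a nonzero additive subgroup of $\aalg$ that is neither $\{0\}$ nor all of $\aalg$ (for instance $1_{\aalg} \notin J$ because the degree-$e$ coefficient of $1_{\aalg}$ is $1_{\aalg_0}\notin I$). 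So the whole content is to check that $J$ is a two-sided ideal.

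For the left ideal property I would take a homogeneous generator $r u_g$ of $\aalg$ with $r\in\aalg_0$ and a typical element $s u_h \in J$ with $s\in I$, and compute, using Lemma \ref{sigmaalphamaps}(i),(ii),
\begin{displaymath}
	(r u_g)(s u_h) = r\,\sigma_g(s)\,u_g u_h = r\,\sigma_g(s)\,\alpha(g,h)\,u_{gh}.
\end{displaymath}
Here $\sigma_g(s)\in \sigma_g(I)\subseteq I$ by $G$-invariance of $I$, and then $r\,\sigma_g(s)\,\alpha(g,h)\in I$ because $I$ is an ideal of $\aalg_0$ (note $\sigma_g(s)\,\alpha(g,h)\in I$ already, and left-multiplying by $r\in\aalg_0$ keeps us in $I$). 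Hence the product lies in $I u_{gh}\subseteq J$. For the right ideal property, with $(s u_h)(r u_g) = s\,\sigma_h(r)\,\alpha(h,g)\,u_{hg}$, the coefficient $s\,\sigma_h(r)\,\alpha(h,g)$ lies in $I$ because $s\in I$ and $I$ absorbs multiplication by the element $\sigma_h(r)\,\alpha(h,g)\in\aalg_0$ on the right — and here commutativity of $\aalg_0$ makes the two-sided absorption automatic, so I do not even need to worry about which side. Extending additively over finite sums, $J$ is a two-sided ideal, contradicting simplicity of $\aalg$; this proves the contrapositive.

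The only genuine subtlety — and the step I would be most careful about — is making sure the $G$-invariance hypothesis is used in exactly the right place: it is precisely what guarantees $\sigma_g(s)\in I$ for the left-ideal computation, and since $\aalg$ is only pre-crystalline-plus-torsion-free rather than strongly graded, one must invoke Lemma \ref{torsionfree}(vi) to know each $\sigma_g$ is a genuine ring automorphism (so that $\sigma_g(I)=I$, not merely $\sigma_g(I)\subseteq I$, is available and the computation $r\,\sigma_g(s)\in I$ makes sense with $\sigma_g(s)$ an honest element of $\aalg_0$). Commutativity of $\aalg_0$ is used to avoid having to track the normalizing behaviour of the $\alpha(g,h)$ on both sides. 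No appeal to the earlier intersection theorems is needed; the argument is self-contained once (P1)–(P3) and Lemma \ref{sigmaalphamaps} are in hand.
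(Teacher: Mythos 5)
Your proposal is correct and is essentially the paper's own argument: both construct the ideal $\bigoplus_{g\in G} I\,u_g = I\Diamond_{\sigma}^{\alpha}G$ from a $G$-invariant ideal $I$ of $\aalg_0$ and use the directness of the decomposition (P1) and the freeness of each $\aalg_0 u_g$ to see that it is nonzero and proper. The only difference is presentational — you argue by contraposition and write out the two-sided ideal verification via $\sigma$ and $\alpha$, which the paper leaves as "one may verify."
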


\begin{proof}
Note that since $\aalg_0$ is commutative, the map $\sigma : G \to \Aut(\aalg_0)$ is a group homomorphism. Let $\gcrystalline$ be a simple ring, and $J$ an arbitrary nonzero $G$-invariant ideal in $\aalg_0$. One may verify that $J \Diamond_{\sigma}^{\alpha} G$ is a nonzero ideal of $\aalg_0 \Diamond_{\sigma}^{\alpha} G$. (This follows from the fact that for each $g\in G$, $\aalg_0 \, u_g$ is a free left $\aalg_0$-module with basis $u_g$.) Since $\aalg_0 \Diamond_{\sigma}^{\alpha} G$ is simple, we get $J \Diamond_{\sigma}^{\alpha} G = \aalg_0 \Diamond_{\sigma}^{\alpha} G$. Therefore $\aalg_0 \subseteq J \Diamond_{\sigma}^{\alpha} G$, and from the gradation it follows that
\begin{displaymath}
	\aalg_0 \subseteq J \subseteq \aalg_0
\end{displaymath}
and hence $\aalg_0 = J$, which shows that $\aalg_0$ is $G$-simple.
\end{proof}

\begin{corollary}\label{GcrossedSimpleImplyGSimple2}
Let $\mathcal{R} = \bigoplus_{g\in G} \mathcal{R}_g$ be a $G$-crossed product, where $\rring_e$ is commutative. If $\rring$ is a simple ring, then $\rring_e$ is a $G$-simple ring (with respect to the canonical action).
\end{corollary}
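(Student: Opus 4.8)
The plan is to deduce Corollary \ref{GcrossedSimpleImplyGSimple2} directly from Proposition \ref{CrystallineSimpleImplyGSimple} by observing that a $G$-crossed product is a particular kind of crystalline graded ring. First I would recall, as already noted at the end of Section \ref{Preliminaries}, that every $G$-crossed product is a crystalline graded ring: if $\rring = \bigoplus_{g\in G}\rring_g$ is a $G$-crossed product, then for each $g\in G$ we may pick a unit $u_g \in U(\rring)\cap \rring_g$ (choosing $u_e = 1_{\rring}$), and then $\rring_g = \rring_e \, u_g = u_g \, \rring_e$ is free of rank one as a left (and, since $u_g$ is a unit, also as a right) $\aalg_0$-module over $\aalg_0 := \rring_e$, with the decomposition $\rring = \bigoplus_{g\in G}\rring_e u_g$ making $\rring$ into the $G$-graded ring we started with. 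Hence $\rring = \aalg_0 \Diamond_\sigma^\alpha G$ is a (pre-)crystalline graded ring, and since each $u_g$ is invertible it is automatically $S(G)$-torsion free (condition (v) or (iii) of Lemma \ref{torsionfree} holds because $\alpha(g,g^{-1}) = u_g u_{g^{-1}}$ is a unit), so it is in fact crystalline.

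The second step is to identify the two group actions. The crystalline structure yields maps $\sigma_g : \aalg_0 \to \aalg_0$ defined by $u_g a = \sigma_g(a) u_g$ for $a\in \aalg_0$, as in Lemma \ref{sigmaalphamaps}(i). On the other hand, the canonical action of Lemma \ref{karpilovskysats} on $C_\rring(\rring_e) \supseteq Z(\rring_e) = \rring_e$ (the last equality since $\rring_e$ is commutative) is characterized by $r_g \lambda = \sigma_g(\lambda) r_g$ for all $r_g\in \rring_g$; specializing to $r_g = u_g$ gives exactly the same defining relation, and by the uniqueness clause in Lemma \ref{karpilovskysats}(i) the two $\sigma_g$ agree. (One can also argue via formula \eqref{defofsigma}: taking the partition of unity $u_g u_{g^{-1}}^{-1}$ — more precisely $u_g \cdot (u_g)^{-1}$, a single-term sum — in \eqref{partitionofunity} gives $\sigma_g(\lambda) = u_g \lambda u_g^{-1}$, which is the crystalline $\sigma_g$.) So the notion of $G$-simplicity of $\rring_e$ with respect to the canonical action coincides with $G$-simplicity with respect to the crystalline action.

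With these two identifications in hand, the corollary is immediate: apply Proposition \ref{CrystallineSimpleImplyGSimple} to the crystalline graded ring $\rring = \aalg_0 \Diamond_\sigma^\alpha G$ with $\aalg_0 = \rring_e$ commutative; simplicity of $\rring$ then forces $\aalg_0 = \rring_e$ to be $G$-simple with respect to $\sigma$, which is the canonical action. I do not expect a genuine obstacle here — the content is entirely in Proposition \ref{CrystallineSimpleImplyGSimple} and in the standard fact that $G$-crossed products are crystalline. The only point requiring a line of care is the verification that the crystalline $\sigma_g$ and the canonical $\sigma_g$ are literally the same automorphism of $\rring_e$, so that the hypothesis "no nontrivial $G$-invariant ideals" means the same thing in both formulations; this is exactly the uniqueness statement of Lemma \ref{karpilovskysats}(i) applied with the single-term partition of unity coming from a homogeneous unit, so it costs essentially nothing.
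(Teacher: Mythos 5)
Your proposal is correct and follows exactly the route the paper intends: the corollary is stated without proof precisely because a $G$-crossed product is a crystalline graded ring (as remarked at the end of Section \ref{Preliminaries}), so Proposition \ref{CrystallineSimpleImplyGSimple} applies directly. Your extra care in checking that the crystalline $\sigma_g$ and the canonical action coincide (via $\sigma_g(\lambda)=u_g\lambda u_g^{-1}$ and the uniqueness clause of Lemma \ref{karpilovskysats}(i)) is a detail the paper leaves implicit, but it is verified correctly.
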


\begin{remark}\label{integralDomainNeedntBeSimple}
A field is automatically an integral domain. However, an integral domain need not be simple (and hence not a field). This is for example illustrated by the ring of polynomials in one variable $\C[x]$, which is an integral domain. Consider the subset $I=\{\sum_{k \in \Z_{\geq 0}} c_k \, x^k \, \mid \, c_k \in \C, \,\, c_0=0 \}$ of $\C[x]$. This is clearly a proper ideal in $\C[x]$, and hence $\C[x]$ is not simple.
\end{remark}

\begin{example}\label{FirstWeylAlgebraExample}
It is well-known that the first Weyl algebra $\aalg = \frac{\complex \langle x,y \rangle}{(xy-yx-1)}$ is simple. The first Weyl algebra is an example of a crystalline graded ring, with $G=(\Z,+)$ and $\aalg_e=\aalg_0 = \complex[xy]$ (see e.g. \cite{CGR} for details).
By Remark \ref{integralDomainNeedntBeSimple} the ring $\aalg_0$ is not simple. However, by Proposition \ref{CrystallineSimpleImplyGSimple} we conclude that $\aalg_0=\complex[xy]$ is $\Z$-simple. As a side remark we should also mention that one can show that $\aalg_0$ is maximal commutative in $\gcrystalline$.
\end{example}

\section{$G$-simple subrings in strongly $G$-graded rings}\label{GSimpleSubringsStronglyGraded}

In this section we shall describe how simplicity of a strongly $G$-graded ring $\rring = \bigoplus_{g\in G} \rring_g$ is related to $G$-simplicity of the subrings $Z(\rring_e)$ and $C_{\rring}(\rring_e)$. We begin by recalling the following basic lemma which is easily verified.
\begin{lemma}\label{idealproj}
Let $A$ be a ring containing a subring $B$. If $I$ is an ideal in $A$, then $J= I\cap B$ is an ideal in $B$.
\end{lemma}

Using this we obtain the following.

\begin{lemma}\label{Gsimpleimpliesproperintersection}
Let $\mathcal{R} = \bigoplus_{g\in G} \mathcal{R}_g$ be a strongly $G$-graded ring. If $Z(\rring_e)$ is a $G$-simple ring (with respect to the canonical action), then no proper ideal of $\rring$ intersects $Z(\rring_e)$ nontrivially.
\end{lemma}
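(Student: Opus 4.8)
The plan is to argue by contradiction: suppose $I$ is a proper ideal of $\rring$ with $I \cap Z(\rring_e) \neq \{0\}$, and derive that $Z(\rring_e)$ is not $G$-simple. First I would set $J = I \cap Z(\rring_e)$. By Lemma \ref{idealproj} (applied with $A = \rring$, $B = Z(\rring_e)$), $J$ is an ideal of $Z(\rring_e)$, and by hypothesis it is nonzero. The key step is to show that $J$ is $G$-invariant, i.e. $\sigma_g(J) \subseteq J$ for every $g \in G$. Once that is established, $G$-simplicity of $Z(\rring_e)$ forces $J = Z(\rring_e)$, so $1_{\rring} \in Z(\rring_e) = J \subseteq I$, whence $I = \rring$, contradicting that $I$ is proper.

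To verify $G$-invariance, take $\lambda \in J$, so $\lambda \in I$ and $\lambda \in Z(\rring_e)$. Fix $g \in G$ and write a partition of unity $\sum_{i=1}^{n_g} a_g^{(i)} b_{g^{-1}}^{(i)} = 1_{\rring}$ as in \eqref{partitionofunity}, with $a_g^{(i)} \in \rring_g$ and $b_{g^{-1}}^{(i)} \in \rring_{g^{-1}}$. Then by \eqref{defofsigma},
\begin{displaymath}
\sigma_g(\lambda) = \sum_{i=1}^{n_g} a_g^{(i)} \, \lambda \, b_{g^{-1}}^{(i)}.
\end{displaymath}
Each summand lies in $\rring_g \cdot I \cdot \rring_{g^{-1}} \subseteq I$ because $I$ is a two-sided ideal, so $\sigma_g(\lambda) \in I$. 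On the other hand, Lemma \ref{karpilovskysats}(i) guarantees $\sigma_g(\lambda) \in Z(\rring_e)$ whenever $\lambda \in Z(\rring_e)$. Hence $\sigma_g(\lambda) \in I \cap Z(\rring_e) = J$, which is exactly $G$-invariance of $J$.

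I do not expect any serious obstacle here; the statement is essentially a packaging of the fact that the canonical action restricts to $Z(\rring_e)$ together with the observation that $\sigma_g$ is realized by a sum of inner-type products by homogeneous elements, which automatically keeps an ideal of $\rring$ invariant. The only point requiring a little care is to make sure the relevant implications in Lemma \ref{karpilovskysats}(i) are invoked correctly (that $\sigma_g$ genuinely maps $Z(\rring_e)$ into itself, not merely $C_{\rring}(\rring_e)$), and that one remembers $J$ is nonzero so that $G$-simplicity actually yields $J = Z(\rring_e)$ rather than $J = \{0\}$. A brief remark that the same proof works verbatim with $C_{\rring}(\rring_e)$ in place of $Z(\rring_e)$ (using the corresponding clause of Lemma \ref{karpilovskysats}) could be added if the subsequent development needs it.
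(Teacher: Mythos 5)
Your proof is correct and follows essentially the same route as the paper: form $J = I \cap Z(\rring_e)$ via Lemma \ref{idealproj}, observe that $\sigma_g(\lambda) = \sum_i a_g^{(i)}\lambda\, b_{g^{-1}}^{(i)}$ lands in $I$ because $I$ is two-sided and in $Z(\rring_e)$ by Lemma \ref{karpilovskysats}(i), and then invoke $G$-simplicity to get $1_\rring \in I$. The only cosmetic difference is that you frame it as a contradiction while the paper argues directly that any ideal meeting $Z(\rring_e)$ nontrivially must be all of $\rring$.
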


\begin{proof}
Let $Z(\rring_e)$ be $G$-simple and $I$ an ideal of $\rring$ which intersects $Z(\rring_e)$ nontrivially. By Lemma \ref{idealproj}, $J=I \cap Z(\rring_e)$ is an ideal in $Z(\rring_e)$. For any $x \in J$ and every $g\in G$, we have $\sigma_g(x) = \sum_{i=1}^{n_g} a_{g}^{(i)} \, x \, b_{g^{-1}}^{(i)} \in I \cap Z(\rring_e) = J$. This shows that $J$ is a $G$-invariant ideal of $Z(\rring_e)$. By assumption $J$ is nonzero and hence $J=Z(\rring_e)$. In particular this shows that $1_\rring \in J$, from which we get $1_\rring \in I$ and hence $\rring = I$.
\end{proof}

If $\rring_e$ is commutative, then clearly $\rring_e = Z(\rring_e)$ and hence we have an action $\sigma : G \to \Aut(\rring_e)$. The following proposition generalizes Corollary \ref{GcrossedSimpleImplyGSimple2}.

\begin{proposition}\label{simpleimpliesGsimple}
Let $\mathcal{R} = \bigoplus_{g\in G} \mathcal{R}_g$ be a strongly $G$-graded ring, where $\rring_e$ is commutative. If $\rring$ is a simple ring, then $\rring_e$ is a $G$-simple ring (with respect to the canonical action).
\end{proposition}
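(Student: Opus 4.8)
The plan is to show the contrapositive: assuming $\rring_e$ is \emph{not} $G$-simple, I will produce a proper nonzero ideal of $\rring$, contradicting simplicity of $\rring$. So suppose there is a nontrivial $G$-invariant ideal $J$ of $\rring_e$, i.e. $\{0\} \neq J \neq \rring_e$ with $\sigma_g(J) = J$ for all $g\in G$. The natural candidate for an ideal of $\rring$ is $I = \bigoplus_{g\in G} J\rring_g$ (equivalently $\sum_{g\in G} \rring_g J$, and one should check these coincide). The two things to verify are that $I$ is a genuine twosided ideal of $\rring$ and that it is proper and nonzero.

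First I would check $I$ is an ideal. It is clearly a left $\rring_e$-submodule and, since the grading is strong, $\rring_h \cdot J\rring_g \subseteq \rring_h J \rring_g$; using the key intertwining relation \eqref{sigmarelation} (that is, $r_h\lambda = \sigma_h(\lambda)r_h$ for $\lambda\in \rring_e = Z(\rring_e)$, extended $\rring_e$-linearly from $J$ to $\rring_h J \subseteq \sigma_h(J)\rring_h = J\rring_h$), one gets $\rring_h J \rring_g \subseteq J\rring_h\rring_g \subseteq J\rring_{hg}$, so $I$ is a left ideal. Multiplying on the right by $\rring_h$ is immediate since $J\rring_g\rring_h \subseteq J\rring_{gh}$. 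Hence $I$ is a twosided ideal.

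Next, $I$ is nonzero because $J \subseteq J\rring_e \subseteq I$ and $J\neq\{0\}$. The crucial point is that $I$ is \emph{proper}: if $I = \rring$, then in particular $1_\rring \in I$, and since $1_\rring \in \rring_e$ and the decomposition $\rring = \bigoplus_g \rring_g$ is direct with the degree-$e$ piece of $I$ equal to $J\rring_e = J$ (here I use that $I = \bigoplus_g J\rring_g$ really is graded with $I\cap\rring_e = J$, which follows from $J\rring_e = J$ and directness of the sum), we would get $1_\rring \in J$, forcing $J = \rring_e$ — contradicting $J \neq \rring_e$. Therefore $I$ is a proper nonzero ideal of $\rring$, so $\rring$ is not simple.

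The main obstacle is the bookkeeping that makes $I = \bigoplus_{g\in G} J\rring_g$ honestly graded with neutral component exactly $J$: one must confirm that $J\rring_g$ is an additive subgroup of $\rring_g$ for each $g$, that the sum over $g$ is direct (inherited from the directness of $\rring = \bigoplus_g\rring_g$), and in particular that $J\rring_e = J$ rather than something larger. Once these are in place, the ideal property is a short computation with \eqref{sigmarelation} and strong gradedness, and properness is immediate. (An alternative, essentially equivalent, route is to invoke Lemma \ref{Gsimpleimpliesproperintersection} in contrapositive spirit together with Theorem \ref{stronglycommutantsnitt}: any nonzero ideal $I$ of $\rring$ meets $\rring_e = C_{\rring}(\rring_e)$ nontrivially in an ideal $I\cap\rring_e$ of $\rring_e$ which — by the same \eqref{sigmarelation} computation as in Lemma \ref{Gsimpleimpliesproperintersection} — is $G$-invariant; $G$-simplicity of $\rring_e$ then forces $I\cap\rring_e = \rring_e \ni 1_\rring$, so $I = \rring$, proving every nonzero ideal is all of $\rring$.)
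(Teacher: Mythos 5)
Your main argument is correct and is essentially the paper's own proof: both build the ideal $J\rring=\sum_{g\in G}J\rring_g$ from a $G$-invariant ideal $J$ of $\rring_e$, use the relation $s_g\,c=\sigma_g(c)\,s_g$ together with $\sigma_g(J)=J$ to see it is twosided, and use the gradation to identify its degree-$e$ component with $J$ (the paper argues directly, concluding $J\rring=\rring$ forces $J=\rring_e$, while you argue contrapositively; the content is identical). One caution about your parenthetical ``alternative route'': as stated it proves the \emph{converse} implication ($\rring_e$ $G$-simple $\Rightarrow$ $\rring$ simple), not the proposition at hand, and moreover it silently assumes $\rring_e=C_{\rring}(\rring_e)$, i.e.\ maximal commutativity, which is not a hypothesis here --- Theorem \ref{stronglycommutantsnitt} only guarantees a nonzero intersection with $C_{\rring}(\rring_e)$, not with $\rring_e$. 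So that aside should be dropped or rephrased; the main proof stands on its own.
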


\begin{proof}
Let $J$ be an arbitrary nonzero $G$-invariant ideal in $\rring_e$. Consider the subset $J \rring$ of $\rring$. (Note that $J \rring$ denotes not only products of the form $jr$, $j\in J$, $r\in \rring$, but finite sums of such elements!)
It is easy to see that $J \rring$ is a right ideal in $\rring$, and from the fact that $J$ is a $G$-invariant ideal in $\rring_e$ we conclude that $J \rring$ is also a left ideal. Indeed, for $g,h\in G$ and $c\in J$, $r_h\in \rring_h$, $s_g \in \rring_g$ we have $s_g \, c \, r_h = \sigma_g(c) \, s_g \, r_h \in J\rring$.
Furthermore, $\rring$ is unital and hence $J \rring$ must be nonzero. The ring $\rring$ is simple and therefore we conclude that $J \rring = \rring$. In particular we see that $\rring_e \subseteq J \rring$. From the strong gradation we conclude that
\begin{displaymath}
	\rring_e \subseteq J \rring_e \subseteq J \subseteq \rring_e
\end{displaymath}
and hence $J = \rring_e$. This shows that $\rring_e$ is $G$-simple.
\end{proof}

%
%
%

We shall now direct our attention to the subring $C_{\rring}(\rring_e)$ of $\rring$. Note that the following lemma does not require $\rring_e$ to be commutative.

\begin{lemma}\label{helpskit}
Let $\mathcal{R} = \bigoplus_{g\in G} \mathcal{R}_g$ be a strongly $G$-graded ring. If $C_{\rring}(\rring_e)$ is a $G$-simple ring (with respect to the canonical action), then no proper ideal of $\rring$ intersects $C_{\rring}(\rring_e)$ nontrivially.
\end{lemma}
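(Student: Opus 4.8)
The plan is to mimic the proof of Lemma~\ref{Gsimpleimpliesproperintersection}, replacing $Z(\rring_e)$ by $C_{\rring}(\rring_e)$ throughout. Suppose $C_{\rring}(\rring_e)$ is $G$-simple and let $I$ be an ideal of $\rring$ with $I\cap C_{\rring}(\rring_e)\neq\{0\}$. By Lemma~\ref{idealproj}, $J=I\cap C_{\rring}(\rring_e)$ is an ideal of $C_{\rring}(\rring_e)$.

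Next I would check that $J$ is $G$-invariant with respect to the canonical action $\sigma$. For $x\in J$ and $g\in G$, write $\sigma_g(x)=\sum_{i=1}^{n_g} a_g^{(i)}\,x\,b_{g^{-1}}^{(i)}$ using a partition of unity as in \eqref{partitionofunity}. Since $x\in I$ and $I$ is a twosided ideal, $\sigma_g(x)\in I$; and by Lemma~\ref{karpilovskysats}(i), $\sigma_g(x)\in C_{\rring}(\rring_e)$. Hence $\sigma_g(x)\in I\cap C_{\rring}(\rring_e)=J$, so $J$ is a nonzero $G$-invariant ideal of $C_{\rring}(\rring_e)$.

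Since $C_{\rring}(\rring_e)$ is $G$-simple, it follows that $J=C_{\rring}(\rring_e)$. In particular $1_{\rring}\in C_{\rring}(\rring_e)\subseteq \rring_e$ lies in $J\subseteq I$, so $I=\rring$. By contrapositive, no proper ideal of $\rring$ intersects $C_{\rring}(\rring_e)$ nontrivially, which is the claim.

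I do not expect any genuine obstacle here: the argument is essentially identical to that of Lemma~\ref{Gsimpleimpliesproperintersection}, and the only point requiring care is the invariance step, where one must invoke Lemma~\ref{karpilovskysats}(i) to guarantee that $\sigma_g$ maps $C_{\rring}(\rring_e)$ into itself (so that $\sigma_g(x)$ really lands back inside $J$ and not merely inside $I$). Everything else is formal.
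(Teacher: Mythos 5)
Your argument is correct and is essentially identical to the paper's own proof: form $J=I\cap C_{\rring}(\rring_e)$, use the partition of unity together with Lemma~\ref{karpilovskysats}(i) to see that $J$ is a $G$-invariant ideal of $C_{\rring}(\rring_e)$, and conclude $J=C_{\rring}(\rring_e)\ni 1_{\rring}$, whence $I=\rring$. The only blemish is the parenthetical inclusion $C_{\rring}(\rring_e)\subseteq\rring_e$, which is false in general (when $\rring_e$ is commutative the inclusion goes the other way); fortunately all you need is $1_{\rring}\in C_{\rring}(\rring_e)$, which holds trivially, so the proof stands.
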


\begin{proof}
Let $C_{\rring}(\rring_e)$ be $G$-simple and $I$ an ideal of $\rring$ which intersects $C_{\rring}(\rring_e)$ nontrivially. 
By Lemma \ref{idealproj}, $J=I \cap C_{\rring}(\rring_e)$ is an ideal in $C_{\rring}(\rring_e)$. For any $x \in J$ and every $g\in G$, we have $\sigma_g(x) = \sum_{i=1}^{n_g} a_{g}^{(i)} \, x \, b_{g^{-1}}^{(i)} \in I \cap C_{\rring}(\rring_e) = J$.
This shows that $J$ is a $G$-invariant ideal of $C_{\rring}(\rring_e)$. By assumption $J$ is nonzero and hence $J=C_{\rring}(\rring_e)$. In particular this shows that $1_\rring \in J$, from which we see that $1_\rring \in I$ and hence $\rring = I$.
\end{proof}

\begin{corollary}
Let $\mathcal{R} = \bigoplus_{g\in G} \mathcal{R}_g$ be a semiprime PI-ring which is strongly $G$-graded. If either $Z(\rring_e)$ or $C_{\rring}(\rring_e)$ is a $G$-simple ring (with respect to the canonical action), then $\rring$ is a simple ring.
\end{corollary}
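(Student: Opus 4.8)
The plan is to show that, under the semiprime PI hypothesis, $G$-simplicity of one of the subrings $Z(\rring_e)$ or $C_{\rring}(\rring_e)$ forces every nonzero ideal of $\rring$ to be all of $\rring$. The key input is Theorem~\ref{newzentersnitt}: in any strongly $G$-graded ring, every nonzero ideal $I$ of $\rring$ meets $C_{\rring}(Z(\rring_e))$ nontrivially. To exploit this I would first observe that for a semiprime PI-ring, a classical theorem of Rowen asserts that every nonzero (two-sided) ideal meets the center $Z(\rring)$ nontrivially; since $Z(\rring) = C_{\rring}(\rring_e)^G \subseteq C_{\rring}(\rring_e) \subseteq C_{\rring}(Z(\rring_e))$ by Lemma~\ref{karpilovskysats}(iii), any nonzero ideal $I$ of $\rring$ in fact satisfies $I \cap Z(\rring_e) \neq \{0\}$ as well (indeed $I \cap C_{\rring}(\rring_e) \neq \{0\}$, and taking a nonzero central element of $\rring$ inside $I$ even lands in $Z(\rring_e)$ since $Z(\rring) \subseteq Z(\rring_e)$). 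So both intersections that appear in Lemmas~\ref{Gsimpleimpliesproperintersection} and~\ref{helpskit} are automatically nontrivial for every nonzero ideal.

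With that in hand the argument is short. Suppose $Z(\rring_e)$ is $G$-simple, and let $I$ be a nonzero ideal of $\rring$. By the paragraph above, $I \cap Z(\rring_e) \neq \{0\}$, so by Lemma~\ref{Gsimpleimpliesproperintersection} the ideal $I$ cannot be proper, i.e.\ $I = \rring$. Hence $\rring$ has no nontrivial ideals and is simple. If instead $C_{\rring}(\rring_e)$ is $G$-simple, let $I$ be a nonzero ideal of $\rring$; since a nonzero central element of $\rring$ contained in $I$ lies in $Z(\rring) \subseteq C_{\rring}(\rring_e)$ we have $I \cap C_{\rring}(\rring_e) \neq \{0\}$, and Lemma~\ref{helpskit} then forces $I = \rring$. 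Either way $\rring$ is simple.

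The main obstacle is locating and invoking the correct commutative-ring-theoretic fact about semiprime PI-rings: the statement that a nonzero ideal of a semiprime PI-ring must intersect the center nontrivially (Rowen's theorem). Once this is cited, the rest is a direct combination of Lemma~\ref{karpilovskysats}(iii) (to identify $Z(\rring)$ as the $G$-fixed subring of $C_{\rring}(\rring_e)$, so in particular $Z(\rring) \subseteq Z(\rring_e) \cap C_{\rring}(\rring_e)$) with the two already-proven Lemmas~\ref{Gsimpleimpliesproperintersection} and~\ref{helpskit}. One small point to be careful about: Rowen's theorem gives $I \cap Z(\rring) \neq \{0\}$, and we need this element to live in $Z(\rring_e)$ (resp.\ $C_{\rring}(\rring_e)$); this is immediate because $Z(\rring)$ centralizes all of $\rring$, hence in particular centralizes $\rring_e$, so $Z(\rring) \subseteq C_{\rring}(\rring_e)$, and $Z(\rring) \subseteq Z(\rring_e)$ since $Z(\rring) \cap \rring_e$ need not be all we want — more precisely every element of $Z(\rring)$ lies in $\rring_e$ by Lemma~\ref{karpilovskysats}(iii) and commutes with $\rring_e$, so it lies in $Z(\rring_e)$.
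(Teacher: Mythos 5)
Your argument is correct and is essentially the paper's own proof: both invoke Rowen's theorem that a nonzero ideal of a semiprime PI-ring meets $Z(\rring)$ nontrivially, observe $Z(\rring)\subseteq Z(\rring_e)\subseteq C_{\rring}(\rring_e)$, and then conclude via Lemma \ref{Gsimpleimpliesproperintersection} or Lemma \ref{helpskit}. The appeal to Theorem \ref{newzentersnitt} in your first paragraph is superfluous, since Rowen's theorem already supplies the needed nonzero element.
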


\begin{proof}
Let $I$ be a nonzero ideal in $\rring$. It follows from \cite[Theorem 2]{Rowen} that $I \cap Z(\rring) \neq \{0 \}$. Clearly $Z(\rring) \subseteq Z(\rring_e) \subseteq C_{\rring}(\rring_e)$ and hence by Lemma \ref{Gsimpleimpliesproperintersection} or \ref{helpskit} we conclude that $I = \rring$.
\end{proof}

As we shall see Theorem \ref{stronglyGgradedSimpleNecSuff} requires $\rring_e$ not only to be commutative, but maximal commutative in $\rring$. We begin with the following proposition which applies to the more general situation when $\rring_e$ is not necessarily maximal commutative in $\rring$.

\begin{proposition}\label{GSimpleCommutantSimple}
Let $\mathcal{R} = \bigoplus_{g\in G} \mathcal{R}_g$ be a strongly $G$-graded ring, where $\rring_e$ is commutative. If $C_{\rring}(\rring_e)$ is a $G$-simple ring (with respect to the canonical action), then $\rring$ is a simple ring.
\end{proposition}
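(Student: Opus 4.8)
The plan is to combine two results already at our disposal: the intersection theorem for strongly graded rings with commutative neutral component (Theorem~\ref{stronglycommutantsnitt}, equivalently Theorem~\ref{newzentersnitt} specialized to the commutative case) and Lemma~\ref{helpskit}, which says that $G$-simplicity of $C_{\rring}(\rring_e)$ forces every proper ideal of $\rring$ to meet $C_{\rring}(\rring_e)$ trivially.

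First I would take an arbitrary nonzero ideal $I$ of $\rring$ and aim to show $I = \rring$. Since $\rring_e$ is commutative by hypothesis, Theorem~\ref{stronglycommutantsnitt} applies and yields $I \cap C_{\rring}(\rring_e) \neq \{0\}$; that is, $I$ meets the commutant of the neutral component nontrivially. Next, invoking the hypothesis that $C_{\rring}(\rring_e)$ is $G$-simple with respect to the canonical action, Lemma~\ref{helpskit} tells us that no proper ideal of $\rring$ can intersect $C_{\rring}(\rring_e)$ nontrivially. Since we have just seen that $I$ does intersect $C_{\rring}(\rring_e)$ nontrivially, $I$ cannot be proper, so $I = \rring$. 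As $I$ was an arbitrary nonzero ideal and $\rring$ is unital, this shows $\rring$ is simple.

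There is essentially no obstacle here: the proposition is an immediate consequence of the two cited results, which is presumably why it is stated as a proposition rather than a theorem. The only point requiring a little care is to make sure the hypotheses line up — commutativity of $\rring_e$ is exactly what is needed to apply Theorem~\ref{stronglycommutantsnitt}, and $G$-simplicity of $C_{\rring}(\rring_e)$ is exactly the hypothesis of Lemma~\ref{helpskit}. One could alternatively bypass Lemma~\ref{helpskit} and argue directly: given $0 \neq x \in I \cap C_{\rring}(\rring_e)$, the ideal of $C_{\rring}(\rring_e)$ generated by $x$ is $G$-invariant, since $\sigma_g(x) = \sum_{i=1}^{n_g} a_g^{(i)}\, x\, b_{g^{-1}}^{(i)} \in I \cap C_{\rring}(\rring_e)$ for every $g \in G$; hence by $G$-simplicity this ideal is all of $C_{\rring}(\rring_e)$, so $1_\rring \in I$ and $I = \rring$. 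This inline argument merely reproves Lemma~\ref{helpskit}, so citing the lemma is cleaner.
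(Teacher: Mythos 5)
Your argument is correct and is essentially identical to the paper's own proof: it applies Theorem~\ref{stronglycommutantsnitt} (using commutativity of $\rring_e$) to get $I \cap C_{\rring}(\rring_e) \neq \{0\}$ for a nonzero ideal $I$, and then Lemma~\ref{helpskit} to conclude $I = \rring$. Nothing further is needed.
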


\begin{proof}
Let $I$ be an arbitrary nonzero ideal of $\rring$. Since $\rring_e$ is commutative it follows from Theorem \ref{stronglycommutantsnitt} that $I \cap C_{\rring}(\rring_e) \neq \{0\}$. By Lemma \ref{helpskit} we conclude that $I=\rring$ and hence $\rring$ is a simple ring.
\end{proof}

By combining Proposition \ref{simpleimpliesGsimple} and Proposition \ref{GSimpleCommutantSimple} we get the following theorem.

\begin{theorem}\label{stronglyGgradedSimpleNecSuff}
Let $\mathcal{R} = \bigoplus_{g\in G} \mathcal{R}_g$ be a strongly $G$-graded ring. If $\rring_e$ is maximal commutative in $\rring$, then the following two assertions are equivalent:
\begin{enumerate}
	\item[(i)] $\rring_e$ is a $G$-simple ring (with respect to the canonical action).
	\item[(ii)] $\rring$ is a simple ring.
\end{enumerate}
\end{theorem}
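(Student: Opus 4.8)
The plan is to assemble the theorem directly from the two propositions cited just before the statement, using maximal commutativity of $\rring_e$ as the bridge that makes them talk to each other. The key observation is that when $\rring_e$ is maximal commutative in $\rring$ we have $\rring_e = C_{\rring}(\rring_e) = Z(\rring_e)$, so the three candidate subrings appearing in the earlier results collapse into one, and the canonical action $\sigma : G \to \Aut(\rring_e)$ of Lemma \ref{karpilovskysats} is literally the action on $\rring_e$ referred to in both (i) and (ii).

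First I would prove that (ii) implies (i). Since $\rring_e$ is maximal commutative it is in particular commutative, so Proposition \ref{simpleimpliesGsimple} applies verbatim: if $\rring$ is simple then $\rring_e$ is $G$-simple with respect to the canonical action. Nothing further is needed for this direction.

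Next I would prove that (i) implies (ii). Here I would invoke Proposition \ref{GSimpleCommutantSimple}, which states that if $\rring_e$ is commutative and $C_{\rring}(\rring_e)$ is $G$-simple, then $\rring$ is simple. The only thing to check is that its hypothesis is met: $\rring_e$ is commutative by assumption, and because $\rring_e$ is \emph{maximal} commutative we have $C_{\rring}(\rring_e) = \rring_e$, so the assumed $G$-simplicity of $\rring_e$ in (i) is exactly the $G$-simplicity of $C_{\rring}(\rring_e)$ required by Proposition \ref{GSimpleCommutantSimple}. One should also note in passing that the canonical action restricted to $C_{\rring}(\rring_e)$ agrees with the action on $\rring_e$, which is immediate from the description in Lemma \ref{karpilovskysats}(i)--(ii) since the two rings coincide. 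This yields that $\rring$ is simple, completing the equivalence.

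The proof is therefore essentially a bookkeeping argument; there is no genuine obstacle once the earlier propositions are in hand. The one point that genuinely carries weight — and is the reason the hypothesis is maximal commutativity rather than mere commutativity — is the identification $C_{\rring}(\rring_e) = \rring_e$, which is what lets the $G$-simplicity hypothesis in (i), phrased for $\rring_e$, feed into Proposition \ref{GSimpleCommutantSimple}, phrased for $C_{\rring}(\rring_e)$. Without maximal commutativity one only gets the implication in Proposition \ref{GSimpleCommutantSimple} under the stronger hypothesis on the (possibly larger) commutant, and the converse direction can fail, so the statement is genuinely an equivalence only in the maximal commutative case.
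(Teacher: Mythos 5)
Your proof is correct and is exactly the paper's argument: the paper derives the theorem by combining Proposition \ref{simpleimpliesGsimple} (for (ii)$\Rightarrow$(i)) with Proposition \ref{GSimpleCommutantSimple} (for (i)$\Rightarrow$(ii)), using the identification $C_{\rring}(\rring_e)=\rring_e$ supplied by maximal commutativity. You have merely made explicit the bookkeeping that the paper leaves to the reader.
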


One should note that Proposition \ref{GSimpleCommutantSimple} and Theorem \ref{stronglyGgradedSimpleNecSuff} are more general than Theorem \ref{stronglysimple} in the sense that $\rring_e$ is not required to be simple. On the other hand, this did not come for free. We had to make an additional assumption on $\rring_e$, namely that it was commutative.

\begin{remark}
Note that Theorem \ref{stronglyGgradedSimpleNecSuff} especially applies to $G$-crossed products.
\end{remark}

One may think that for a simple strongly graded ring $\mathcal{R} = \bigoplus_{g\in G} \mathcal{R}_g$ where $\rring_e$ is commutative and $G$-simple, this would imply that $\rring_e$ would be maximal commutative in $\rring$. In general this is not true, as the following example shows.

\begin{example}\label{complexnumbersexample}
Consider the field of complex numbers $\C = \R \rtimes^{\alpha} \Z_2$ as a $\Z_2$-graded twisted group ring (see e.g. \cite{OinSilAGMFGBG} for details). Clearly $\C$ is simple as is $\R$. Hence $\R$ is also $\Z_2$-simple, but it is not maximal commutative in $\C$.
\end{example}

The purpose of the following example is to present a strongly group graded ring which is not a crossed product, and to identify a $G$-simple subring. 

\begin{example}[A strongly group graded, noncrossed product, matrix ring]\label{3by3matricesZ2grading}
Let $\rring=M_3(\C)$ denote the ring of $3 \times 3$-matrices over $\C$. By putting
\begin{displaymath}
\rring_0 =
\left( 
\begin{array}{ccc}
\C & \C & 0 \\
\C & \C & 0 \\
0 & 0 & \C
\end{array}
\right)
\quad \text{and} \quad
\rring_1 =
\left( 
\begin{array}{ccc}
0 & 0 & \C \\
0 & 0 & \C \\
\C & \C & 0
\end{array}
\right)
\end{displaymath}
one may verify that this defines a strong $\mathbb{Z}_2$-gradation on $\rring$. However, note that $\rring$ is \underline{not} a crossed product with this grading since the homogeneous component $\rring_1$ does not contain any invertible elements of $M_3(\C)$! A simple calculation yields
\begin{displaymath}
	Z(\rring_0) = \left\{ \left( \begin{array}{ccc}
a & 0 & 0 \\
0 & a & 0 \\
0 & 0 & b
\end{array}\right) \Bigg| \quad a,b\in \C \right\}
\end{displaymath}
and in fact one may verify that $C_{\rring}(\rring_0)=Z(\rring_0$). In order to define an action $\sigma : \Z_2 \to \Aut(Z(\rring_0))$ we need to make a decomposition of the identity matrix $I=1_\rring$, in accordance with \eqref{partitionofunity}.
Let $E_{i,j}$ denote the $3 \times 3$-matrix which has a $1$ in position $(i,j)$ and zeros everywhere else. The decomposition in $\rring_0$ is trivial, but in $\rring_1$ we may for example choose
\begin{displaymath}
	I = E_{1,3}E_{3,1}+E_{2,3}E_{3,2}+E_{3,2}E_{2,3}.
\end{displaymath}
From these decompositions we are now able to define the map $\sigma : \Z_2 \to \Aut(Z(\rring_0))$. By looking at the ideal generated by $E_{3,1}$ it is clear that $Z(\rring_0)$ is not simple. However, there are only two nontrivial ideals in $Z(\rring_0)$ and one easily checks that neither of them is invariant under $\sigma_1$. This shows that for our simple ring $M_3(\C)$, the subring $Z(\rring_0) = C_\rring(\rring_0)$ is in fact $\Z_2$-simple .
\end{example}

\begin{remark}
In many examples of graded rings $A = \bigoplus_{g\in G} A_g$, $A_e$ is commutative. In that situation we always have $A_e \subseteq C_A(A_e)$. However, in Example \ref{3by3matricesZ2grading}, $\rring_0$ is not commutative and we in fact get that $C_{\rring}(\rring_0)$ coincides with $Z(\rring_0)$ which is smaller than $\rring_0$. 
\end{remark}

Proposition \ref{simpleimpliesGsimple} shows that in a simple strongly graded ring $\rring$ where $\rring_e$ is commutative, we automatically have that $\rring_e = Z(\rring_e)$ is $G$-simple. In Proposition \ref{GSimpleCommutantSimple} we saw that for a strongly graded ring $\rring$ where $\rring_e$ is commutative, $G$-simplicity of $C_{\rring}(\rring_e)$ implies simplicity of $\rring$. After seing Example \ref{3by3matricesZ2grading} it is tempting to think that the converse is also true (even for noncommutative $\rring_e$), i.e. that simplicity of $\rring$ always gives rise to $G$-simple subrings. The natural questions are: 
\begin{enumerate}
	\item If $\rring$ is strongly group graded and simple, is $C_{\rring}(\rring_e)$ necessarily $G$-simple?
	\item If $\rring$ is strongly group graded and simple, is $Z(\rring_e)$ necessarily $G$-simple?
\end{enumerate}
At the moment we do not know how to prove this in the most general situation, and we can not find any counter example either.

\begin{remark}
	Note that, if $\rring$ is commutative, then it is trivial to verify that the answers to both questions are affirmative. If $\rring_e$ is maximal commutative, then by Theorem \ref{stronglyGgradedSimpleNecSuff} we again conclude that the answers to both questions are affirmative. Also, if $\rring_e$ is commutative it follows by Proposition \ref{simpleimpliesGsimple} that the answer to question nr. 2 is affirmative. The case that remains to be investigated is that of a noncommutative ring $\rring$ where $\rring_e$ is not maximal commutative (we may not even assume it to be commutative) in $\rring$.
\end{remark}

\subsection{Simplicity of skew group rings}\label{SimplicityOfSkewGroupRings}

From Example \ref{complexnumbersexample} we learnt that simplicity of a strongly graded ring $\rring$ does not immediately imply maximal commutativity of the neutral component $\rring_e$. However, for skew group rings there is in fact such an implication, as the following theorem shows.

\begin{theorem}\label{skewringssimple}
Let $\mathcal{R}=\gskewring$ be a skew group ring with $\mathcal{R}_e$ commutative. The following two assertions are equivalent:
\begin{enumerate}
	\item[(i)] $\mathcal{R}_e$ is a maximal commutative and $G$-simple subring in $\mathcal{R}$.
	\item[(ii)] $\mathcal{R}=\gskewring$ is a simple ring.
\end{enumerate}
\end{theorem}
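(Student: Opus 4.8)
The plan is to prove the two implications separately. For $(i)\Rightarrow(ii)$, observe that a skew group ring is in particular a strongly $G$-graded ring with neutral component $\mathcal{R}_e$, so this direction is immediate from Theorem \ref{stronglyGgradedSimpleNecSuff}: if $\mathcal{R}_e$ is maximal commutative in $\mathcal{R}$, then $\mathcal{R}_e$ being $G$-simple (with respect to the canonical action, which for a skew group ring agrees with the defining action $\sigma$) is equivalent to simplicity of $\mathcal{R}$. Thus essentially nothing new is needed here beyond recording the identification of the two actions.

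For the converse $(ii)\Rightarrow(i)$, suppose $\mathcal{R}=\gskewring$ is simple. First, by Proposition \ref{simpleimpliesGsimple} (applicable since $\mathcal{R}_e$ is commutative and $\mathcal{R}$ is strongly graded), we immediately conclude that $\mathcal{R}_e$ is $G$-simple. It remains to show that $\mathcal{R}_e$ is maximal commutative in $\mathcal{R}$. The natural route is to invoke Theorem \ref{skewringsnew}, which (for a skew group ring with $\mathcal{R}_e$ commutative) tells us that $\mathcal{R}_e$ is maximal commutative in $\mathcal{R}$ if and only if every nonzero ideal of $\mathcal{R}$ meets $\mathcal{R}_e$ nontrivially. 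But $\mathcal{R}$ is simple, so the only nonzero ideal is $\mathcal{R}$ itself, and $\mathcal{R}\cap\mathcal{R}_e=\mathcal{R}_e\ni 1_{\mathcal{R}}\neq 0$; hence condition (ii) of Theorem \ref{skewringsnew} is trivially satisfied, and therefore $\mathcal{R}_e$ is maximal commutative in $\mathcal{R}$. Combining this with the $G$-simplicity already established gives assertion (i).

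The only point requiring care — and the closest thing to an obstacle — is the verification that the canonical action from Lemma \ref{karpilovskysats} coincides with the structure action $\sigma$ of the skew group ring, so that "$G$-simple" means the same thing in the hypotheses of Theorem \ref{stronglyGgradedSimpleNecSuff}, Proposition \ref{simpleimpliesGsimple}, and the statement being proved. This is straightforward: in a skew group ring the elements $u_g$ are homogeneous units, so one may take the partition of unity $\sum_i a_g^{(i)}b_{g^{-1}}^{(i)} = u_g u_{g^{-1}} = 1_{\mathcal{R}}$ with a single term, and then $\sigma_g(\lambda) = u_g\,\lambda\,u_{g^{-1}}$ for $\lambda\in\mathcal{R}_e$, which is exactly the defining action. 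With this identification in hand, both implications reduce to already-established results and the argument is complete.
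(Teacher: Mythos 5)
Your proposal is correct and follows essentially the same route as the paper: $(i)\Rightarrow(ii)$ via Theorem \ref{stronglyGgradedSimpleNecSuff}, and $(ii)\Rightarrow(i)$ by combining Theorem \ref{skewringsnew} (maximal commutativity, since simplicity trivially gives nonzero intersection of every nonzero ideal with $\mathcal{R}_e$) with Proposition \ref{simpleimpliesGsimple} ($G$-simplicity). Your explicit check that the canonical action coincides with the defining action of the skew group ring is a sensible addition that the paper leaves implicit.
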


\begin{proof}
By Theorem \ref{stronglyGgradedSimpleNecSuff}, (i) implies (ii). Suppose that (ii) holds. It follows by Theorem \ref{skewringsnew} that $\rring_e$ is maximal commutative in $\rring$ and by Proposition \ref{simpleimpliesGsimple} we conclude that $\rring_e$ is $G$-simple. This concludes the proof.
\end{proof}

It follows from \cite[Corollary 10]{OinSil} that the assumptions made in \cite[Corollary 2.1]{Crow} force the coefficient ring to be maximal commutative in the skew group ring, and by the assumptions made in \cite[Theorem 2.2]{Crow} the same conclusion follows by \cite[Proposition 2.2]{Crow} and \cite[Corollary 7]{OinSil}. This shows that Theorem \ref{skewringssimple} is a generalization of \cite[Corollary 2.1]{Crow} and \cite[Theorem 2.2]{Crow}.


\begin{remark}
Note that, in Theorem \ref{skewringssimple}, the implication from (i) to (ii) holds in much greater generality. Indeed, it holds for any strongly graded ring.
\end{remark}


A majority of the objects studied in \cite{SSD1,SSD2,SSD3} satisfy the conditions of Theorem \ref{skewringssimple} and hence it applies. We shall show one such example.

\begin{example}[Skew group algebras associated to dynamical systems]\label{torrtsystem}
Let $h : X \to X$ be a bijection on a nonempty set $X$, and $A \subseteq \C^X$ an algebra of functions, such that if $f \in A$ then $f \circ h \in A$ and $f \circ h^{-1} \in A$. Let $\tilde{h} : \Z \to \Aut(A)$ be defined by $\tilde{h}_n : f \mapsto f \circ h^{\circ (n)}$ for $f \in A$ and $n\in \Z$. We now have a $\Z$-crossed system (with trivial $\tilde{h}$-cocycle) and we may define the skew group algebra $A \rtimes_{\tilde{h}} \Z$. For more details we refer to the papers \cite{SSD1,SSD2,SSD3}, in which this construction has been studied thoroughly.
\end{example}

By Theorem \ref{skewringssimple} we get the following corollary, since $\C^X$ is commutative.


\begin{corollary}
Following Example \ref{torrtsystem}, let $A \rtimes_{\tilde{h}} \Z$ be the skew group algebra associated to a dynamical system $(X,h)$. The following assertions are equivalent:
\begin{enumerate}
	\item[(i)] $A \rtimes_{\tilde{h}} \Z$ is a simple algebra.
	\item[(ii)] $A$ is a maximal commutative and $\Z$-simple subalgebra in $A \rtimes_{\tilde{h}} \Z$.
\end{enumerate}
\end{corollary}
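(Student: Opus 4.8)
The plan is to deduce this corollary directly from Theorem~\ref{skewringssimple} by verifying that the skew group algebra $A \rtimes_{\tilde{h}} \Z$ of Example~\ref{torrtsystem} satisfies the hypotheses of that theorem. The grading group here is $G = \Z$ and the neutral component is $A$, so the only thing that needs checking is that $A$ is commutative and that $\tilde h$ genuinely realizes $A \rtimes_{\tilde h} \Z$ as a skew group ring in the sense of Section~\ref{Preliminaries} (trivial cocycle).

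First I would observe that $A \subseteq \C^X$ is, by hypothesis, an algebra of $\C$-valued functions on $X$ with pointwise operations, hence commutative. Next, $\tilde h : \Z \to \Aut(A)$ is a group homomorphism: each $\tilde h_n : f \mapsto f \circ h^{\circ(n)}$ is a $\C$-algebra automorphism of $A$ (its inverse is $\tilde h_{-n}$, which maps $A$ into $A$ by the stated closure assumption), and $\tilde h_m \circ \tilde h_n = \tilde h_{m+n}$ since $h^{\circ(m)}\circ h^{\circ(n)} = h^{\circ(m+n)}$. Thus $A \rtimes_{\tilde h}\Z$ is an honest skew group ring with commutative neutral component $A$, and Theorem~\ref{skewringssimple} applies verbatim with $\mathcal{R}_e = A$ and $G = \Z$.

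The conclusion is then immediate: by Theorem~\ref{skewringssimple}, $A \rtimes_{\tilde h} \Z$ is a simple ring if and only if $A$ is a maximal commutative and $\Z$-simple subring of $A \rtimes_{\tilde h}\Z$, which is precisely the asserted equivalence of (i) and (ii). (One may also note in passing that since $\C^X$ is commutative, any algebra $A$ of functions containing the constants is a commutative ring, as already used.)

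I do not anticipate a genuine obstacle here; the corollary is a direct specialization. The only point requiring a word of care is confirming that the $\tilde h$-cocycle is trivial so that we are in the skew group ring setting (not merely a crossed product or crystalline graded ring) — but this is explicitly part of the setup in Example~\ref{torrtsystem}, where the construction is described as a $\Z$-crossed system with trivial cocycle. Hence no further argument is needed beyond invoking Theorem~\ref{skewringssimple}.
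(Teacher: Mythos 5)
Your proposal is correct and matches the paper's argument: the corollary is obtained by applying Theorem~\ref{skewringssimple} to the skew group algebra $A \rtimes_{\tilde{h}} \Z$, the only point to note being that $A \subseteq \C^X$ is commutative. Your additional checks (that each $\tilde{h}_n$ is an automorphism and that the cocycle is trivial) are sound but already built into the setup of Example~\ref{torrtsystem}.
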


\section{Three different proofs of Proposition \ref{maintheorem}}\label{ProofSection}

We shall now give three different proofs of Proposition \ref{maintheorem}. 

\begin{proof}[First proof based on Theorem \ref{stronglycommutantsnitt}]
Let $I$ be a nonzero ideal in $\mathcal{R}$. Our assumptions together with Lemma \ref{idealproj} and Theorem \ref{stronglycommutantsnitt} ensure that $J=I \cap R_{e}$ is a nonzero ideal in $\mathcal{R}_e$. We have $J=\mathcal{R}_e$, since $\mathcal{R}_e$ is simple, and this yields $\mathcal{R}_e \subseteq I$. Recall that $1_{\mathcal{R}} \in \mathcal{R}_e \subseteq I$, and hence $I=\mathcal{R}$. This shows that $\mathcal{R}$ is simple.
\end{proof}

\begin{proof}[Second proof based on Theorem \ref{stronglysimple}]
It follows by our assumptions and Proposition \ref{picardinjective} that the map $\psi : G \to \Pic(\mathcal{R}_e), g \mapsto [\mathcal{R}_g]$, is injective. The neutral component $\mathcal{R}_e$ is assumed to be simple, and hence by Theorem \ref{stronglysimple} $\mathcal{R}$ is simple.
\end{proof}

\begin{proof}[Third proof based on Proposition \ref{GSimpleCommutantSimple}]
By our assumptions $\rring_e=Z(\rring_e)$ $=C_{\rring}(\rring_e)$ is simple and hence in particular $G$-simple. It now follows immediately from Proposition \ref{GSimpleCommutantSimple} that $\rring$ is simple.
\end{proof}

\section{Application: $\Z$-graded algebraic crossed products associated to topological dynamical systems}\label{topdynsys}

Let $(X,h)$ be a topological dynamical system, i.e. $X$ is a compact Hausdorff space and $h : X \to X$ is a homeomorphism. The algebra of complex-valued continuous functions on $X$, where addition and multiplication is defined pointwise, is denoted by $C(X)$. Define a map
\begin{displaymath}
	\tilde{h} : \Z \to \Aut( C(X)), \quad \tilde{h}_n(f) = f \circ h^{\circ(n)}, \quad f \in C(X)
\end{displaymath}
and let $C(X) \rtimes_{\tilde{h}} \Z$ be the algebraic crossed product\footnote{In ring theory literature this would be referred to as a \emph{skew group algebra}, but here we adopt the terminology used in \cite{SSD1,SSD2,SSD3} which comes from the $C^*$-algebra literature. Note however, that this is not a $C^*$-crossed product, but an algebraic crossed product.} associated to our dynamical system. Recall that elements of $C(X) \rtimes_{\tilde{h}} \Z$ are written as formal sums $\sum_{n\in \Z} f_n \, u_n$, where all but a finite number of $f_n \in C(X)$, for $n\in\Z$, are nonzero. The multiplication in $C(X) \rtimes_{\tilde{h}} \Z$ is defined as the bilinear extension of the rule
\begin{displaymath}
	(f_n \, u_n) (g_m \, u_m) = f_n \, \tilde{h}_n(g_m) \, u_{n+m}
\end{displaymath}
for $n,m \in \Z$ and $f_n,g_m \in C(X)$. We now define the following sets:
\begin{eqnarray*}
\Per^n(h) &=& \left\{ x \in X \, \mid \, h^{\circ (n)}(x)=x \right\}, \quad n \in \Z \\
\Per(h) &=& \bigcup_{n\in \Z} \Per^n(h) \\
\Aper(h) &=& X \setminus \Per(h)
\end{eqnarray*}

Elements of $\Aper(h)$ are referred to as \emph{aperiodic points} of the topological dynamical system $(X,h)$. By Urysohn's lemma, $C(X)$ separates points of $X$ and hence by \cite[Corollary 3.4]{SSD1} we get the following.

\begin{lemma}\label{commutantTopDynSys}
The commutant of $C(X)$ in $\rring = C(X) \rtimes_{\tilde{h}} \Z$ is given by
\begin{displaymath}
	C_{\rring}(C(X)) = \left\{ \sum_{n\in \Z} f_n \, u_n \,\, \Big| \,\, \supp(f_n) \subseteq \Per^n(h) , \quad f_n \in C(X), \quad n\in \Z \right\}.
\end{displaymath}
\end{lemma}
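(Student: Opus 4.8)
The plan is to prove Lemma~\ref{commutantTopDynSys} by a direct computation using the multiplication rule in $C(X) \rtimes_{\tilde{h}} \Z$, exactly the kind of argument that underlies \cite[Corollary 3.4]{SSD1}. Write an arbitrary element as $a = \sum_{n\in\Z} f_n \, u_n$ and ask when $a$ commutes with every $g \in C(X) \subseteq C(X) \rtimes_{\tilde{h}} \Z$ (where $g$ is identified with $g \, u_0$). Using bilinearity, $(g\,u_0)(f_n\,u_n) = g f_n \, u_n$ while $(f_n\,u_n)(g\,u_0) = f_n \, \tilde{h}_n(g) \, u_n = f_n \, (g\circ h^{\circ(n)}) \, u_n$. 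Comparing the homogeneous component of degree $n$, the condition $ag = ga$ for all $g\in C(X)$ becomes: for every $g \in C(X)$ and every $n\in\Z$,
\begin{displaymath}
	f_n(x)\,\bigl(g(x) - g(h^{\circ(n)}(x))\bigr) = 0 \qquad \text{for all } x\in X.
\end{displaymath}

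The key step is to translate this pointwise identity into the support condition $\supp(f_n) \subseteq \Per^n(h)$. If $x \notin \Per^n(h)$, then $h^{\circ(n)}(x) \neq x$, and since $X$ is compact Hausdorff it is in particular normal, so by Urysohn's lemma (as already invoked in the excerpt, "$C(X)$ separates points of $X$") there exists $g \in C(X)$ with $g(x) = 1$ and $g(h^{\circ(n)}(x)) = 0$; the displayed identity then forces $f_n(x) = 0$. Hence any $x$ with $f_n(x)\neq 0$ must lie in $\Per^n(h)$, and by continuity $\supp(f_n) = \overline{\{x : f_n(x)\neq 0\}} \subseteq \overline{\Per^n(h)}$; I would note that $\Per^n(h)$ is closed (it is the equalizer of the two continuous maps $h^{\circ(n)}$ and $\identity_X$ into the Hausdorff space $X$), so in fact $\supp(f_n)\subseteq \Per^n(h)$. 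This establishes the inclusion $C_{\rring}(C(X)) \subseteq \{\,\sum_n f_n u_n \mid \supp(f_n)\subseteq \Per^n(h)\,\}$.

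For the reverse inclusion, suppose $\supp(f_n) \subseteq \Per^n(h)$ for all $n$. Then for every $x\in X$ either $f_n(x) = 0$ or $x \in \Per^n(h)$, i.e. $h^{\circ(n)}(x) = x$, so that $g(x) = g(h^{\circ(n)}(x))$; in either case $f_n(x)\,(g(x) - g(h^{\circ(n)}(x))) = 0$. Summing over $n$ and using the multiplication rule recovers $ag = ga$ for all $g\in C(X)$, so $a \in C_{\rring}(C(X))$. Combining the two inclusions gives the claimed description.

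I do not anticipate a genuine obstacle here; the statement is essentially a bookkeeping exercise. The one point that warrants care is the separation-of-points input: one must make sure the space-theoretic hypotheses ($X$ compact Hausdorff, hence normal) genuinely supply, via Urysohn's lemma, a continuous function distinguishing $x$ from $h^{\circ(n)}(x)$ whenever these two points differ — this is precisely why the lemma is attributed to \cite[Corollary 3.4]{SSD1} together with the Urysohn remark, and why the compactness of $X$ matters. A secondary subtlety is closedness of $\Per^n(h)$, needed to pass from "$f_n$ is nonzero only on $\Per^n(h)$" to "$\supp(f_n)\subseteq\Per^n(h)$"; this follows immediately since $X$ is Hausdorff.
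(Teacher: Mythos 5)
Your proof is correct. The paper does not prove this lemma itself --- it simply invokes Urysohn's lemma and cites \cite[Corollary 3.4]{SSD1} --- and your direct computation (comparing degree-$n$ components of $ag$ and $ga$, using a Urysohn function to separate $x$ from $h^{\circ(n)}(x)$ when they differ, and noting that $\Per^n(h)$ is closed so the support inclusion follows) is exactly the standard argument underlying that citation.
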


The topological dynamical system $(X,h)$ is said to be \emph{topologically free} if and only if $\Aper(h)$ is dense in $X$. Using topological properties of our (completely regular) space $X$ together with Lemma \ref{commutantTopDynSys}, one can prove the following.

\begin{lemma}\label{maxcommequivtopfree}
$C(X)$ is maximal commutative in $C(X) \rtimes_{\tilde{h}} \Z$ if and only if $(X,h)$ is topologically free.
\end{lemma}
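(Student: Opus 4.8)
The plan is to prove both implications by a combination of the description of the commutant $C_{\rring}(C(X))$ from Lemma~\ref{commutantTopDynSys} and a separation-of-points argument provided by Urysohn's lemma, which is available since $X$ is compact Hausdorff (hence completely regular and normal). Write a general element as $a = \sum_{n\in\Z} f_n\,u_n$. Recall that $C(X)$ is maximal commutative in $C(X)\rtimes_{\tilde h}\Z$ precisely when $C_{\rring}(C(X)) = C(X)$, i.e.\ when every homogeneous component $f_n\,u_n$ lying in the commutant with $n\neq 0$ must vanish.

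For the direction ``topologically free $\Rightarrow$ maximal commutative'', suppose $(X,h)$ is topologically free, and let $a = \sum_n f_n\,u_n \in C_{\rring}(C(X))$. By Lemma~\ref{commutantTopDynSys}, $\supp(f_n)\subseteq\Per^n(h)$ for every $n$. I want to show $f_n = 0$ for all $n\neq 0$. Fix $n\neq 0$ and suppose $f_n(x_0)\neq 0$ for some $x_0$; then $x_0$ lies in the (open) set $U = \{x : f_n(x)\neq 0\}$, which is therefore a nonempty open set contained in $\Per^n(h)$. But every point of $\Per^n(h)$ is periodic, hence $U\subseteq\Per(h)$, and $U$ being open and nonempty contradicts density of $\Aper(h) = X\setminus\Per(h)$. (Here one uses that a dense set meets every nonempty open set.) Hence no such $x_0$ exists, $f_n\equiv 0$, and $a\in C(X)$; this gives $C_{\rring}(C(X)) = C(X)$.

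For the converse, ``maximal commutative $\Rightarrow$ topologically free'', I argue contrapositively: assume $(X,h)$ is not topologically free, so $\Aper(h)$ is not dense, i.e.\ there is a nonempty open set $V\subseteq X$ with $V\cap\Aper(h)=\emptyset$, equivalently $V\subseteq\Per(h)$. The point now is to produce, from $V$, a nonempty open set sitting inside a single $\Per^n(h)$ for some fixed $n\neq 0$; this is where completely regular / Baire-type reasoning enters, since $V = \bigcup_{n}(V\cap\Per^n(h))$ and the sets $\Per^n(h)$ are closed. One takes a point $x_0\in V$ of minimal period $p\ge 1$, so $x_0\in\Per^p(h)$; if $p=1$ we may instead pass to some $\Per^n(h)$ with $n\neq 0$ containing an open set by a covering argument, and in any case one obtains a nonempty open $W$ with $W\subseteq\Per^n(h)$ for a fixed $n\neq 0$. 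Pick $x_0\in W$; by Urysohn's lemma choose $f\in C(X)$ with $f(x_0)\neq 0$ and $\supp(f)\subseteq W\subseteq\Per^n(h)$. Then $f\,u_n$ satisfies the support condition of Lemma~\ref{commutantTopDynSys}, so $f\,u_n\in C_{\rring}(C(X))\setminus C(X)$, witnessing that $C(X)$ is not maximal commutative.

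The main obstacle is the converse direction, specifically the step of extracting from the non-dense aperiodic set a nonempty \emph{open} subset of a \emph{single} $\Per^n(h)$: a priori $V$ is only covered by the countable family $\{\Per^n(h)\}_{n\neq 0}$ of closed sets, so to get an open piece inside one of them one invokes that $X$ (compact Hausdorff, hence Baire) cannot be written as a countable union of closed sets all with empty interior, so some $\overline{V}\cap\Per^n(h)$ has nonempty interior relative to $\overline{V}$, giving the desired open set $W$; care must be taken because the interior is taken in $\overline V$ rather than in $X$, but since $V$ is open one can intersect with $V$ to land back in an honest open subset of $X$. Once $W$ is in hand the rest is a routine application of Urysohn's lemma and Lemma~\ref{commutantTopDynSys}.
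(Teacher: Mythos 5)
The paper does not actually prove this lemma: it only remarks that it follows from ``topological properties of our (completely regular) space $X$'' together with Lemma~\ref{commutantTopDynSys}, deferring to \cite{SSD1}. Your write-up supplies exactly the standard argument that is being alluded to, and it is correct in substance: the easy direction is that a nonzero $f_n$ with $\supp(f_n)\subseteq \Per^n(h)$, $n\neq 0$, yields a nonempty open set inside $\Per(h)$, contradicting density of $\Aper(h)$; the converse reduces, via Urysohn and normality of $X$, to extracting from a nonempty open $V\subseteq\Per(h)=\bigcup_{n\neq 0}\Per^n(h)$ a nonempty open set inside a single closed set $\Per^n(h)$, which is a Baire category statement. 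Two points to tighten. First, the detour through a point of minimal period $p$ does nothing (a single point of $\Per^p(h)$ gives no open set inside it) and should simply be deleted; your final paragraph correctly identifies Baire as the actual mechanism. Second, your Baire application is stated for $\overline V$, but you only know $V\subseteq\bigcup_{n\neq 0}\Per^n(h)$, not $\overline V\subseteq\bigcup_{n\neq 0}\Per^n(h)$, so $\overline V$ need not be covered by the closed sets $\overline V\cap\Per^n(h)$. The clean fix is either to apply the Baire property directly to $V$ itself (an open subset of a compact Hausdorff space is locally compact Hausdorff, hence Baire, and $V=\bigcup_{n\neq 0}\bigl(V\cap\Per^n(h)\bigr)$ with each term closed in $V$), or to note that in the Baire space $\overline V$ a countable union of closed nowhere dense sets has empty interior and hence cannot contain the nonempty open set $V$. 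Either way one gets a nonempty open $W\subseteq\Per^n(h)$ for a single $n\neq 0$, and the Urysohn step then produces $f\neq 0$ with $\supp(f)\subseteq W$, so $f\,u_n\in C_{\rring}(C(X))\setminus C(X)$ as you claim. With that repair the proof is complete.
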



If $I$ is an ideal of $C(X)$ then we denote
\begin{displaymath}
	\supp(I) = \bigcup_{f\in I} \supp(f)
\end{displaymath}
where $\supp(f) = \overline{ \{x\in X \, \mid \, f(x) \neq 0  \} }$ for $f\in C(X)$. Note that a subset $S \subseteq X$ is $\Z$-invariant if and only if $h(S) = S$.

\begin{lemma}\label{ZsimpleZinvSubsets}
$C(X)$ is $\Z$-simple if and only if there are no nonempty proper $h$-invariant closed subsets of $X$.
\end{lemma}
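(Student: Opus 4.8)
The plan is to prove both directions by contraposition, using the dictionary between $\Z$-invariant ideals of $C(X)$ and $h$-invariant closed subsets of $X$ furnished by the common zero set. For an ideal $I$ of $C(X)$ put $Z(I) = \{x \in X : f(x) = 0 \text{ for all } f \in I\}$, which is a closed subset of $X$. The first step is to record two elementary facts. If $I$ is $\Z$-invariant then $Z(I)$ is $h$-invariant: since $\tilde h_n(f) = f \circ h^{\circ n}$ and $\tilde h_n(I) = I$ for all $n$, we have $f \in I \iff f \circ h^{\pm 1} \in I$, whence $x \in Z(I) \iff f(h(x)) = 0$ for all $f \in I \iff h(x) \in Z(I)$, so $h(Z(I)) = Z(I)$. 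Conversely, for any closed $h$-invariant $Y \subseteq X$ the ideal $I_Y = \{f \in C(X) : f|_Y = 0\}$ is $\Z$-invariant, since $(f \circ h^{\circ n})|_Y = 0 \iff f|_{h^{\circ n}(Y)} = 0 \iff f|_Y = 0$.

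For the implication from $\Z$-simplicity to the absence of nonempty proper $h$-invariant closed subsets, I argue contrapositively: given such a $Y$, pick $p \in X \setminus Y$ and use Urysohn's lemma (valid since $X$ is compact Hausdorff, hence normal) to obtain $f \in C(X)$ with $f(p) = 1$ and $f|_Y = 0$. Then $I_Y$ is nonzero, and it is proper because $Y \neq \emptyset$ forces $1 \notin I_Y$; by the dictionary it is $\Z$-invariant, so $C(X)$ is not $\Z$-simple.

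For the converse, suppose $C(X)$ is not $\Z$-simple, i.e.\ there is a nontrivial $\Z$-invariant ideal $I$; the claim is that $Z(I)$ is the required subset. It is closed and $h$-invariant by the first step, and proper (that is, $Z(I) \neq X$) because $I \neq \{0\}$. It remains to show $Z(I) \neq \emptyset$, and this is the one genuinely analytic point, the main obstacle such as it is: an ideal of $C(X)$ with empty zero set equals $C(X)$. This is a compactness argument — for each $x$ choose $f_x \in I$ with $f_x(x) \neq 0$; then $|f_x|^2 = f_x \overline{f_x} \in I$, using that $C(X)$ is closed under complex conjugation, and this function is strictly positive on an open neighbourhood $U_x$ of $x$; cover the compact space $X$ by finitely many $U_{x_1}, \dots, U_{x_k}$; then $g = \sum_{i=1}^{k} |f_{x_i}|^2 \in I$ is everywhere positive, hence invertible in $C(X)$, so $1 = g^{-1} g \in I$ and $I = C(X)$, contradicting the properness of $I$. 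Hence $Z(I) \neq \emptyset$, which completes the proof. (In passing, this shows that $C(X)$ is $\Z$-simple precisely when $(X,h)$ is minimal.)
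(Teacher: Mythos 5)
Your proof is correct, and it takes the dual route to the paper's. You encode the correspondence between $\Z$-invariant ideals and $h$-invariant closed sets via zero sets: to an ideal $I$ you attach $Z(I)=\{x\in X : f(x)=0 \text{ for all } f\in I\}$, and to a closed invariant $Y$ the vanishing ideal $I_Y$. The paper instead works with supports: it attaches $\supp(I)=\bigcup_{f\in I}\supp(f)$ to an ideal and, to a nonempty proper closed $h$-invariant $S$, the ideal of functions vanishing \emph{outside} $S$. Your formulation is the more robust of the two: $I_Y$ is nonzero by Urysohn whenever $Y\neq X$ and proper whenever $Y\neq\emptyset$, whereas the ideal of functions supported in $S$ is identically zero when $S$ has empty interior, and $\supp(I)$ need not be proper (nor, a priori, closed) for a proper ideal $I$ --- it is proper only when $Z(I)$ has nonempty interior. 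You also supply the one genuinely nontrivial analytic ingredient that the paper's two-line argument leaves implicit, namely that a proper ideal of $C(X)$ on a compact Hausdorff space has nonempty common zero set; your compactness argument with $g=\sum_{i}\lvert f_{x_i}\rvert^2$ strictly positive and hence invertible is the standard and correct way to establish this. In short, your write-up is a valid alternative that is actually tighter at exactly the points where the paper's sketch is thinnest.
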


\begin{proof}
Suppose that $C(X)$ is not $\Z$-simple. Then there exists some proper nonzero ideal $I \subsetneq C(X)$ such that $\supp(I) \neq \emptyset$ is a proper $h$-invariant closed subset of $X$.
Conversely, suppose that there exists some nonempty proper $h$-invariant closed subset $S \subsetneq X$. Let $B \subseteq C(X)$ be set of functions which vanish outside $S$. Clearly $B$ is a proper nonzero $\Z$-invariant ideal of $C(X)$ and hence $C(X)$ is not $\Z$-simple. 
\end{proof}

\begin{definition}
A topological dynamical system $(X,h)$ is said to be \emph{minimal} if each orbit of the dynamical system is dense in $X$.
\end{definition}

Note that a topological dynamical system $(X,h)$ is minimal if and only if there are no nonempty proper $h$-invariant closed subsets of $X$.

\begin{remark}\label{minimalityimpliesfreeness}
If $X$ is infinite and $(X,h)$ is \emph{minimal}, then $(X,h)$ is automatically \emph{topologically free}. Indeed, take an arbitrary $x\in X$ and suppose that it is perodic. By minimality, the orbit of $x$ which by periodicity is finite, must be dense in $X$. This is a contradiction, since $X$ is Hausdorff, and hence each $x\in X$ is aperiodic.
\end{remark}

\begin{theorem}\label{algebraicanalogue}
If $(X,h)$ is a topological dynamical system with $X$ infinite, then the following assertions are equivalent:
\begin{enumerate}
	\item[(i)] $C(X) \rtimes_{\tilde{h}} \Z$ is a simple algebra.
	\item[(ii)] $C(X)$ is maximal commutative in $C(X) \rtimes_{\tilde{h}} \Z$ and $C(X)$ is $\Z$-simple. 
	\item[(iii)] $(X,h)$ is a minimal dynamical system.
\end{enumerate}
\end{theorem}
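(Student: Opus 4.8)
The plan is to reduce everything to the results already in hand, so that the topological input is confined to Lemmas \ref{maxcommequivtopfree} and \ref{ZsimpleZinvSubsets} and Remark \ref{minimalityimpliesfreeness}, and the ring-theoretic input is confined to Theorem \ref{skewringssimple}. Concretely, I would prove (i) $\Leftrightarrow$ (ii) first, then (ii) $\Leftrightarrow$ (iii), which together give the full chain of equivalences.

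For (i) $\Leftrightarrow$ (ii): the algebra $C(X) \rtimes_{\tilde{h}} \Z$ is a skew group ring whose neutral component $C(X)$ is commutative, so Theorem \ref{skewringssimple} applies directly with $G = \Z$ and $\mathcal{R}_e = C(X)$. That theorem states precisely that $C(X) \rtimes_{\tilde{h}} \Z$ is simple if and only if $C(X)$ is a maximal commutative and $\Z$-simple subring of $C(X) \rtimes_{\tilde{h}} \Z$, which is the statement of (ii). No extra work is required here; in particular the difficulty of controlling arbitrary (not necessarily closed) ideals has already been handled inside Theorem \ref{skewringsnew} and Theorem \ref{skewringssimple}.

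For (ii) $\Leftrightarrow$ (iii): by the remark following the definition of minimality, $(X,h)$ is minimal exactly when $X$ has no nonempty proper $h$-invariant closed subset, and by Lemma \ref{ZsimpleZinvSubsets} this is equivalent to $C(X)$ being $\Z$-simple. Hence (iii) is the same as the second clause of (ii), and (ii) $\Rightarrow$ (iii) is immediate. For (iii) $\Rightarrow$ (ii) I would use the hypothesis that $X$ is infinite together with Remark \ref{minimalityimpliesfreeness}: a minimal system on an infinite space is topologically free, so $C(X)$ is maximal commutative in $C(X) \rtimes_{\tilde{h}} \Z$ by Lemma \ref{maxcommequivtopfree}; combined with the $\Z$-simplicity of $C(X)$ already noted, this yields (ii).

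The only genuinely delicate step is (iii) $\Rightarrow$ (ii), and it is the single place where the infiniteness of $X$ is essential: minimality by itself does not force topological freeness, since on a finite space every point is periodic. Everything else is a formal assembly of the cited results, so I expect no further obstacle beyond making sure the hypothesis "$X$ infinite" is invoked at exactly that point.
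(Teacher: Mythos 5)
Your proposal is correct and follows essentially the same route as the paper's own proof: (i) $\Leftrightarrow$ (ii) by direct appeal to Theorem \ref{skewringssimple}, and (ii) $\Leftrightarrow$ (iii) via Lemma \ref{ZsimpleZinvSubsets}, with Remark \ref{minimalityimpliesfreeness} and Lemma \ref{maxcommequivtopfree} supplying maximal commutativity in the direction that uses the infiniteness of $X$. Your observation that the hypothesis ``$X$ infinite'' enters only at that single point matches the paper exactly.
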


\begin{proof}
(i) $\Longleftrightarrow$ (ii): This follows from Theorem \ref{skewringssimple}.\\
(iii) $\Rightarrow$ (ii): Let $(X,h)$ be  minimal. By Remark \ref{minimalityimpliesfreeness} $(X,h)$ is topologically free and by Lemma \ref{maxcommequivtopfree} this implies that $C(X)$ is maximal commutative in $C(X) \rtimes_{\tilde{h}} \Z$. Furthermore, since $(X,h)$ is minimal there is no nonempty proper $h$-invariant closed subset of $X$ and hence by Lemma \ref{ZsimpleZinvSubsets} it follows that $C(X)$ is $\Z$-simple.\\
(ii) $\Rightarrow$ (iii): Suppose that $(X,h)$ is not minimal. Then there exists some nonempty proper $h$-invariant closed subset of $X$ and by Lemma \ref{ZsimpleZinvSubsets} $C(X)$ is not $\Z$-simple.
\end{proof}

For $C^*$-crossed product algebras associated to topological dynamical systems the analogue of the above theorem, Theorem \ref{simplicityCstar}, is well-known (see e.g. \cite{ArchSpiel}, \cite{Power} or \cite[Theorem 4.3.3]{TomiyamaBook}). C. Svensson and J. Tomiyama recently proved that the analogue of the above theorem also holds for Banach $*$-algebra crossed products (in $L^1$-norm) associated to topological dynamical systems (see \cite[Theorem 4.2]{ChristianTomiyama2}).

\begin{example}[Finite single orbit dynamical systems]
Suppose that $X=\{x,h(x),h^{\circ (2)}(x),\ldots,h^{\circ (p-1)}(x)\}$ consists of a finite $h$-orbit of order $p$, where $p$ is a positive integer. One can then show that $C(X) \rtimes_{\tilde{h}} \Z \cong M_p(\C[t,t^{-1}])$, i.e. the skew group algebra associated to our dynamical system is isomorfic (as a $\C$-algebra) to the algebra of $p \times p$-matrices over the ring of Laurent polynomials over $\C$. Indeed, let $\pi : C(X) \rtimes_{\tilde{h}} \Z \to M_p(\C[t,t^{-1}])$ be the $\C$-algebra morphism defined by
\begin{displaymath}
	\pi(f) =
	\left( \begin{array}{cccc}
	f(x)	&	0	& \ldots	& 0 \\
	0	&	f \circ h(x)	& \ldots	& 0 \\
	\vdots &	\vdots	& \ddots	& \vdots \\
	0	&	0	& \ldots	& f \circ h^{\circ(p-1)}(x)
	\end{array} \right)
\end{displaymath}
for $f\in C(X)$, and
\begin{displaymath}
	\pi(u_1) =
	\left( \begin{array}{ccccc}
	0	&	0	& \ldots	& 0 & t \\
	1	&	0	& \ldots	& 0 & 0 \\
	0	&	1	& \ldots	& 0 & 0 \\
	\vdots &	\vdots	& \ddots	& \vdots & \vdots \\
	0	&	0	& \ldots	& 1 & 0
	\end{array} \right).
\end{displaymath}
Calculating, one sees that
\begin{eqnarray*}
	&& \pi\left( \sum_{n\in \Z} f_n \, u_n \right) = \\
	&& \left( \begin{array}{ccc}
	\sum_{n\in \Z} f_{np}(x) \, t^n	&	 \ldots	& \sum_{n\in \Z} f_{(n-1)p+1}(x) \, t^n \\
		\sum_{n\in \Z} f_{np+1}(h(x)) \, t^n	&	 \ldots	& \sum_{n\in \Z} f_{(n-1)p+2}(h(x)) \, t^n \\
			\sum_{n\in \Z} f_{np+2}(h^{\circ(2)}(x)) \, t^n	&	 \ldots	& \sum_{n\in \Z} f_{(n-1)p+3}(h^{\circ(2)}(x)) \, t^n \\
			\vdots & \vdots & \vdots \\
			\sum_{n\in \Z} f_{(n+1)p-1}(h^{\circ(p-1)}(x)) \, t^n	&	 \ldots	& \sum_{n\in \Z} f_{np}(h^{\circ(p-1)}(x)) \, t^n \\
	\end{array} \right)
\end{eqnarray*}
and by looking at the above matrix row by row, it is straightforward to verify that $\pi$ is bijective (see \cite{ChristianTomiyama,TomiyamaPaper} for a similar isomorphism of $C^*$-algebras).

Clearly $(X,h)$ is a minimal dynamical system and by Lemma \ref{ZsimpleZinvSubsets} we conclude that $C(X)$ is $\Z$-simple. However, each element of $X$ is $n$-periodic and hence $(X,h)$ is not topologically free, which by Lemma \ref{maxcommequivtopfree} entails that $C(X)$ is not maximal commutative in $\rring=C(X) \rtimes_{\tilde{h}} \Z$. The ring $\C[t,t^{-1}]$ is not simple (e.g. by Example \ref{LaurentGroupRing}) and via the isomorphism $\pi$ we conclude that $C(X) \rtimes_{\tilde{h}} \Z$ is never simple. From Section \ref{MiyashitaAction} it is clear that the action $\tilde{h}$ extends to an action of $\Z$ on $C_{\rring}(C(X))$. Finally, by Proposition \ref{GSimpleCommutantSimple}, we conclude that the commutant of $C(X)$ is never $\Z$-simple for our finite single orbit dynamical system.
\end{example}


%
%
%
%

\bibliographystyle{amsalpha}

\begin{thebibliography}{99}

\bibitem{ArchSpiel} R. J. Archbold and J. S. Spielberg, \textit{Topologically free actions and ideals in discrete $C^*$-dynamical systems}, Proceedings of the Edinburgh Mathematical Society \textbf{37} (1993), 119--124.

\bibitem{CaenOyst} S. Caenepeel and F. Van Oystaeyen, {\it Brauer groups and the cohomology of graded rings}, Monographs and Textbooks in Pure and Applied Mathematics, 121, Marcel Dekker, Inc., New York, 1988.

\bibitem{Crow} K. Crow, \textit{Simple regular skew group rings}, J. Algebra Appl. \textbf{4} (2005), no. 2, 127--137.

\bibitem{Dade1982}
E. C. Dade, \textit{The equivalence of various generalizations of group rings and modules}, Math. Z. \textbf{181} (1982), no. 3, 335--344.

\bibitem{Herstein} I. N. Herstein, {\it Noncommutative Rings}, xi+199 pp, The Carus Mathematical Monographs, no. 15, The Mathematical Association of America, New York, 1968.
\bibitem{K} G. Karpilovsky, {\it The Algebraic Structure of Crossed Products}, x+348 pp, North-Holland Mathematics Studies, 142, Notas de Matem\'atica, 118, North-Holland, Amsterdam, 1987.
\bibitem{MoGR} C. N\v ast\v asescu and F. Van Oystaeyen, {\it Methods of Graded Rings}, Lecture Notes in Mathematics, 1836, Springer-Verlag, Berlin, 2004.
\bibitem{CGR} E. Nauwelaerts and F. Van Oystaeyen, \textit{Introducing Crystalline Graded Algebras}, Algebr. Represent. Theory \textbf{11} (2008), No. 2, 133--148.

\bibitem{OinSil}
J. \"Oinert and S. D. Silvestrov, \textit{Commutativity and Ideals in Algebraic Crossed Products}, J. Gen. Lie. T. Appl. \textbf{2} (2008), no. 4, 287--302.

\bibitem{OinSilAGMFGBG}
\bysame
, \textit{On a Correspondence Between Ideals and Commutativity in Algebraic Crossed Products}, J. Gen. Lie. T. Appl. \textbf{2} (2008), No. 3, 216--220.

\bibitem{OinSil3}
\bysame
, \textit{Crossed Product-Like and Pre-Crystalline Graded Rings}, Chapter 24 in S. Silvestrov, E. Paal, V. Abramov, A. Stolin (Eds.), {\it Generalized Lie theory in Mathematics, Physics and Beyond}, pp. 281--296, Springer-Verlag, Berlin, Heidelberg, 2009.

\bibitem{OinSil4}
\bysame
, \textit{Commutativity and Ideals in Pre-Crystalline Graded Rings}. To appear in Acta Appl. Math. (2009)

\bibitem{OinSilTheVav}
J. \"Oinert, S. Silvestrov, T. Theohari-Apostolidi and H. Vavatsoulas, \textit{Commutativity and Ideals in Strongly Graded Rings}. To appear in Acta Appl. Math. (2009)


\bibitem{OlesenPedersenII}
D. Olesen and G. K. Pedersen, \textit{Applications of the Connes spectrum to $C\sp{\ast} $-dynamical systems II.}, J. Funct. Anal. \textbf{36} (1980), no. 1, 18--32.

\bibitem{Power}
S. C. Power, \textit{Simplicity of $C\sp{\ast} $-algebras of minimal dynamical systems}, J. London Math. Soc. (2) \textbf{18} (1978), no. 3, 534--538.

\bibitem{Rowen} L. Rowen, \textit{Some Results on the Center of a Ring with Polynomial Identity}, Bull. Amer. Math. Soc. \textbf{79} (1973), no. 1, 219--223.


\bibitem{SSD1}
C. Svensson, S. Silvestrov and M. de Jeu, \textit{Dynamical Systems and Commutants in Crossed Products}, Internat. J. Math. \textbf{18} (2007), no. 4, 455--471.

\bibitem{SSD2}
\bysame, \textit{Connections Between Dynamical Systems and Crossed Products of Banach Algebras by $\mathbb{Z}$}, in {\it Methods of Spectral Analysis in Mathematical Physics, Conference on Operator Theory, Analysis and Mathematical Physics (OTAMP) 2006, Lund, Sweden}, Operator Theory: Advances and Applications, Vol. 186, Janas, J., Kurasov, P., Laptev, A., Naboko, S. and Stolz, G. (Eds.), pp. 391--401, Birkh\"auser, 2009.

\bibitem{SSD3}
\bysame, \textit{Dynamical systems associated to crossed products}, Preprints in Mathematical Sciences 2007:22, LUFTMA-5088-2007; Leiden Mathematical Institute report 2007-30; arxiv:0707.1881. To appear in Acta Appl. Math. (2009)

\bibitem{ChristianTomiyama} C. Svensson and J. Tomiyama, \textit{On the commutant of $C(X)$ in $C^*$-crossed products by $\Z$ and their representations}, J. Funct. Anal. {\bf 256} (2009), 2367--2386.

\bibitem{ChristianTomiyama2}
\bysame,
\textit{On the Banach $*$-algebra crossed product associated with a topological dynamical system},
arXiv:0902.0690v1 [math.OA] (2009)

\bibitem{TomiyamaBook} J. Tomiyama, {\it Invitation to $C\sp *$-algebras and topological dynamics}, World Scientific Advanced Series in Dynamical Systems, 3. World Scientific Publishing Co., Singapore, 1987.

\bibitem{TomiyamaPaper} \bysame, \textit{$C\sp *$-algebras and topological dynamical systems}, Rev. Math. Phys. \textbf{8} (1996), 741--760.

\bibitem{VanOystaeyen} F. Van Oystaeyen, \textit{On Clifford Systems and Generalized Crossed Products}, J. Algebra \textbf{87} (1984), 396--415.
\end{thebibliography}

\end{document}